\DeclareMathOperator{\id}{id}
\DeclareMathOperator{\intr}{int}
\DeclareMathOperator{\Itin}{Itin}
\DeclareMathOperator{\Min}{Min}
\newcommand{\HE}{\stackrel{\textrm{HE}}{\simeq}}
\newcommand{\N}{\mathcal{N}}
\newcommand{\Z}{\mathbb{Z}}
\newcommand{\R}{\mathbb{R}}
\newtheorem{prop}{Proposition}[section]
\newtheorem{fact}[prop]{Fact}
\newtheorem{lemma}[prop]{Lemma}
\newtheorem{corollary}[prop]{Corollary}
\newtheorem{defn}[prop]{Definition}
\newtheorem*{ack}{Acknowledgements}
\newtheorem*{claim}{Claim}
\newtheorem*{Thm1}{Theorem 1}
\newtheorem*{ThmA}{Theorem A}
\newtheorem*{ThmB}{Theorem B}
\newtheorem*{ThmC}{Theorem C}
\newtheorem*{ThmD}{Theorem D}
\newtheorem*{ThmE}{Theorem E}
\newtheorem*{ThmF}{Theorem F}
\newtheorem*{ThmA'}{Theorem A$'$}
\newtheorem*{ThmB'}{Theorem B$'$}
\newtheorem*{ThmC'}{Theorem C$'$}
\newtheorem*{ThmD'}{Theorem D$'$}
\newtheorem*{ThmE'}{Theorem E$'$}
\newtheorem*{ThmF'}{Theorem F$'$}
\begin{document}


\title[Generalizing the Croke-Kleiner Construction]{Generalizing the Croke-Kleiner Construction}
\begin{abstract}
It is well known that every word hyperbolic group has a well-defined visual boundary.
An example of C. Croke and B. Kleiner shows that the same cannot be said for CAT(0) groups.
All boundaries of a CAT(0) group are, however, \textit{shape equivalent}, as observed
by M. Bestvina and R. Geoghegan.  Bestvina has asked if they also satisfy
the stronger condition of being \textit{cell-like equivalent}.
This article describes a construction which will produce
CAT(0) groups with multiple boundaries.  These groups have very complicated boundaries in high
dimensions.  It is our hope that their study may provide insight into Bestvina's question.
\end{abstract}


\author{Christopher Mooney}
\keywords{geometric group theory; CAT(0) space; boundary of a group}
\address{Department of Mathematical Sciences, University of Wisconsin-Milwaukee, Milwaukee, Wisconsin 53201; (262) 210-0201}
\email{cpmooney@uwm.edu}
\date{\today}
\subjclass[2000]{57M07; 20F65}
\maketitle


\section{Introduction}
\label{sec:intro}
The CAT(0) condition is a geometric notion of nonpositive curvature, similar to the
definition of Gromov $\delta$-hyperbolicity.  A complete geodesic space $X$ is called CAT(0)
if it has the property that geodesic triangles in $X$ are ``no fatter'' than geodesic
triangles in euclidean space (see \cite[Ch II.1]{BH} for a precise definition).
The \textit{visual} or \textit{ideal boundary} of $X$, denoted \(\partial X\),
is the collection of endpoints of geodesic rays
emanating from a chosen basepoint.
It is well-known that \(\partial X\) is well-defined and independent of choice of basepoint.
Furthermore, when given the cone topology, \(X\cup\partial X\) is a $Z$-set compactification for $X$.
A group $G$ is called CAT(0) if it acts geometrically
(i.e. properly discontinuously and cocompactly by isometries) on some CAT(0) space $X$.
In this setup, we call $X$ a cocompact CAT(0) $G$-space
and \(\partial X\) a CAT(0) boundary of $G$.\\

It is an important fact in geometric group theory that every negatively curved group
(that is, every word hyperbolic group) has a well-defined visual boundary.  Specifically,
if a group $G$ acts geometrically on two different CAT(-1) spaces, then the visual boundaries of these
spaces will be homeomorphic.  In the absence of strict negative curvature, the situation
becomes more complicated.\\

We will call a CAT(0) group \textit{rigid} if it has only one
topologically distinct boundary.
P.L. Bowers and K. Ruane showed that if G splits as the product of a
negatively curved group with a free abelian group, then G is rigid (\cite{BR}). Ruane proved
later in \cite{Ru} that if G splits as a product of two negatively curved groups, then G is rigid.
T. Hosaka has extended this work to show that in fact it suffices to know that G splits
as a product of rigid groups (\cite{Ho}). Another condition which guarantees rigidity is knowing
that G acts on a CAT(0) space with isolated flats, which was proven by C. Hruska in \cite{Hr}.\\

Not all CAT(0) groups are rigid, however: C. Croke and B. Kleiner constructed in \cite{CK}
an example of a non-rigid CAT(0) group $G$.  Specifically, they showed that $G$ acts on two different
CAT(0) spaces whose boundaries admit no homeomorphism.
J. Wilson proved in \cite{Wi} that this same group has \textit{uncountably many} boundaries.
More recently it has been shown in \cite{MoKnot} that the knot group $G$ of any connected sum of two non-trivial
torus knots has uncountably many CAT(0) boundaries.\\

On the other end of the spectrum, it has been observed by M. Bestvina (\cite{Be}), R. Geoghegan
(\cite{Ge}), and P. Ontaneda (\cite{On}) that all boundaries of a given CAT(0) group are \textit{shape equivalent}.
Bestvina then posed the question of whether they satisfy the stronger condition of being
\textit{cell-like equivalent}.
R. Ancel, C. Guilbault, and J. Wilson showed in \cite{AGW}
that all the currently known boundaries of Croke and Kleiner's original group satisfy
this property; they are all cell-like equivalent to the Hawaiian earring.\\

Further progress on Bestvina's question has been hampered by a lack of examples of non-rigid CAT(0) groups.
The results in \cite{MoKnot} have boundaries which are similar to the boundaries of the Croke-Kleiner group
and, as such, are unlikely to shed new light on Bestvina's question.  One simple approach to producing
other non-rigid CAT(0) groups is to take a direct product \(G\times H\) of two CAT(0) groups where one
of the factors is not rigid (either the Croke-Kleiner group or one of the groups from \cite{MoKnot}).
However, it was proven in \cite{MoCE} that the answer to Bestvina's question is ``Yes'' for CAT(0) groups
of this form.\\

The goal of this article is to describe a construction which will yield a richer collection of non-rigid CAT(0)
groups.  Our work borrows freely from the main ideas of \cite{CK}, but in the end, we have a flexible strategy
for producing CAT(0) groups which have very complicated boundaries in high dimensions.
We are not claiming new progress on Bestvina's question, but it is our hope that the study of this new collection
may provide insight.\\

The main theorem of this paper is the following.\\

\begin{Thm1}
\label{thm:generalCK}
Let $\Gamma_-$ and $\Gamma_+$ be infinite CAT(0) groups and $m$ and $n$ be positive integers.
Then the free product with amalgamation
\[
	G=\bigl(\Gamma_-\times\Z^m\bigr)\ast_{\Z^m}\bigl(\Z^m\times\Z^n\bigr)\ast_{\Z^n}\bigl(\Z^n\times\Gamma_+\bigr)
\]
is a non-rigid CAT(0) group.
\end{Thm1}

Observe that we get the Croke-Kleiner group if we take \(\Gamma_-=\Gamma_+=\Z\) and \(m=n=1\) in this theorem.
In general, though, any boundary of $G$ will contain spheres of dimension \(m+n-1\).  Thus by choosing $m$ and $n$
to be large, we get a non-rigid CAT(0) group whose boundaries have high dimension.\\

\section{Croke and Kleiner's Original Construction}
	\label{sec:origCK}
Before diving into the proof of Theorem \ref{thm:generalCK},
we quickly sketch the proof of the main theorem of \cite{CK}.  The CAT(0) spaces $X$ constructed have the property
that each is covered by a collection of closed convex subspaces, called \textit{blocks}.  The visual
boundary $\partial B$ of every block $B$ is the suspension of a Cantor set.  The suspension points
are called \textit{poles}.  The intersection of two blocks is a Euclidean plane
called a \textit{wall}.  We then have
the following five statements for each $X$\\

\begin{ThmA} \cite[Sec 1.4]{CK}
The nerve $N$ of the collection of blocks is a tree.
\end{ThmA}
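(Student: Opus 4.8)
The plan is to verify the three defining properties of a tree for $N$: that it is a graph (one-dimensional), that it is connected, and that it has no embedded cycle. The first two are quick. No point of $X$ lies in three distinct blocks --- otherwise one block would contain two walls passing through that point, contradicting the fact (part of the construction) that the walls contained in a single block are pairwise disjoint --- so $N$ has no simplices of dimension $\geq 2$; since moreover two blocks meet in at most one wall, $N$ is a genuine graph. And $N$ is connected because $X$ is connected, each block is convex and hence connected, and $N$ is precisely the intersection graph of the cover of $X$ by blocks. So everything reduces to the absence of cycles in $N$.

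For that I would use three geometric facts about the construction: every wall $W$ separates $X$ into two components; each block is disjoint from every wall it does not contain (which, as above, follows from the walls inside a block being pairwise disjoint); and, for a wall $W=B\cap B'$, each of $B\setminus W$ and $B'\setminus W$ lies in a single component of $X\setminus W$, these being the two \emph{different} components. Granting these, suppose $B_0B_1\cdots B_{k-1}B_0$ is an embedded cycle in $N$ and set $W=B_0\cap B_1$, writing $X\setminus W=X^+\sqcup X^-$ with $B_0\setminus W\subset X^+$ and $B_1\setminus W\subset X^-$. Traverse the complementary arc $B_1,B_2,\dots,B_{k-1},B_0$. Its intermediate blocks $B_2,\dots,B_{k-1}$ are each disjoint from $W$ and connected, and each contains two successive intersection walls of the arc, so all of the walls $B_1\cap B_2,\ B_2\cap B_3,\ \dots,\ B_{k-1}\cap B_0$ lie in one and the same component of $X\setminus W$. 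Since $B_1\cap B_2\subset B_1\setminus W\subset X^-$, that component is $X^-$, so $B_{k-1}\cap B_0\subset X^-$. On the other hand $B_{k-1}\cap B_0$ is disjoint from $W$ (it lies in $B_{k-1}$, which does not contain $W$), so $B_{k-1}\cap B_0\subset B_0\setminus W\subset X^+$ --- a contradiction. Hence $N$ is a connected acyclic graph, i.e.\ a tree.

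I expect the main obstacle to be the first of these geometric inputs --- that a wall separates $X$ --- together with its companions, which together assert exactly that the blocks are glued in a tree-like pattern, no block attached to itself around a loop of walls. This cannot follow from the CAT(0) hypothesis alone: since the boundary of each block is the suspension of a Cantor set, $X$ is not a manifold, so ``a codimension-one flat separates'' is not a formal consequence of two-sidedness. It has to be read off from the way CK build $X$, namely as a tree of spaces over the Bass--Serre tree $\mathcal{T}$ of the amalgamated product presenting the group, with the blocks lying over the vertices of $\mathcal{T}$ and the walls over its edges. From that vantage point the argument shortcuts entirely: removing a wall amounts to removing an edge space, which disconnects the tree of spaces because deleting the corresponding edge disconnects $\mathcal{T}$; and the projection $X\to\mathcal{T}$ sends the cover by blocks to the cover of $\mathcal{T}$ by closed stars of vertices, identifying $N$ with $\mathcal{T}$, which is a tree.
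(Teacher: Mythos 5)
Your overall strategy is the right one, and it is worth saying up front that the paper itself offers no proof of Theorem A: it is quoted from \cite[Sec 1.4]{CK} as background, and the paper's own analogue, Theorem~A$'$, is obtained by asserting that the Croke--Kleiner argument goes through for any block structure in the sense of Definition 3.1 once the $\epsilon$-separation lemma of Section 6 is available. The substantive endgame of your proposal --- realizing $X$ as a tree of spaces over the Bass--Serre tree $\mathcal{T}$ of the amalgam, with blocks over vertices and walls over edges, so that the nerve is identified with $\mathcal{T}$ --- is exactly the justification that \cite{CK} (and this paper, implicitly) rely on, so in spirit you are reproducing the source's argument rather than replacing it. Your preliminary reductions are fine, though ``no point lies in three blocks'' follows more cleanly from the parity axiom (two blocks of the same parity are distinct path components of the same preimage, hence disjoint) than from your ``two walls through one point'' remark, since a priori those two walls could coincide.

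Two of your stated geometric inputs need repair, even though the strategy survives. First, a wall $W=B\cap B'$ does not cut $X$ into two components, and $B\setminus W$ does not lie in a single component of $X\setminus W$: writing $B=E\times H$ with $W=E\times L$ for a leaf $L$, the set $B\setminus W=E\times(H\setminus L)$ is already disconnected, and $X\setminus W$ has infinitely many components. What the map to the Bass--Serre tree actually gives is the weaker statement you need: $X\setminus W$ is the disjoint union of two open \emph{sides} (preimages of the two components of $\mathcal{T}$ minus the midpoint of the corresponding edge), with $B\setminus W$ in one side and $B'\setminus W$ in the other; since your cycle argument only uses that a connected subset of $X\setminus W$ lies in one side, it goes through verbatim after replacing ``component'' by ``side.'' Second, blocks do not map onto closed vertex stars of $\mathcal{T}$ in the usual sense --- closed stars of vertices at distance two intersect, so that cover's nerve is not $\mathcal{T}$; you want the half-stars in the barycentric subdivision (block over $v$ maps onto the union of half-edges at $v$, wall over $e$ onto the midpoint of $e$), or simply the statement that blocks correspond to vertices, walls to edges, and non-adjacent blocks are disjoint, which is precisely what the two-sides decomposition (or the paper's $\epsilon$-lemma) certifies. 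With those restatements your proof is correct and matches the intended one.
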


\begin{ThmB} \cite[Le 3]{CK}
Let \(B_0\) and \(B_1\) be blocks, and $D$ be the distance between the corresponding vertices in
$N$.  Then:\\
\begin{enumerate}
\item If \(D=1\), then \(\partial B_0\cap\partial B_1=\partial W\) where $W$ is the wall \(B_0\cap B_1\).\\
\item If \(D=2\), then \(\partial B_0\cap\partial B_1\) is the set of poles of \(B_{\frac12}\) where
\(B_{\frac12}\) intersects \(B_0\) and \(B_1\).\\
\item If \(D>2\), then \(\partial B_0\cap\partial B_1=\emptyset\).\\
\end{enumerate}
\end{ThmB}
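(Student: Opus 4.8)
The plan is to derive all three parts from two inputs: a soft fact about $\mathrm{CAT}(0)$ boundaries, and the internal structure of the blocks coming from the construction. The soft fact is that if $C,C'\subseteq X$ are closed convex with $C\cap C'\ne\emptyset$, then $\partial C\cap\partial C'=\partial(C\cap C')$, and moreover that if $\xi\notin\partial C$ then $d(r(t),C)\to\infty$ along every geodesic ray $r$ with $r(\infty)=\xi$. For the first statement one fixes $p\in C\cap C'$ and $\xi\in\partial C\cap\partial C'$ and notes that the ray $[p,\xi)$ is a limit of segments $[p,x_n]$ with $x_n\to\xi$ running out along a ray in $C$; each such segment lies in $C$ by convexity, so $[p,\xi)\subseteq C$ as $C$ is closed, and likewise $[p,\xi)\subseteq C'$. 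For the second, if $d(r(t_n),C)$ stayed bounded along some $t_n\to\infty$ then the nearest points of $C$ to the $r(t_n)$ would converge to $\xi$ inside $C$, forcing $\xi\in\partial C$. The structural input I would quote from \cite{CK} is: each block $B$ is isometric to $Y_B\times\R$ with $Y_B$ a tree whose boundary is a Cantor set; the poles of $B$ are the two ends of the lines $\{y\}\times\R$; every wall $W\subseteq B$ has the form $\ell\times\R$ for a bi-infinite geodesic $\ell\subseteq Y_B$, so $\partial W$ is a circle through both poles of $B$; \emph{adjacent blocks never share a pole}; and $X$ is a tree of spaces over $N$, in the sense that if a block $A$ lies strictly between blocks $B_0$ and $B_1$ on the $N$-geodesic joining them, then every path in $X$ from a point of $B_0\setminus A$ to a point of $B_1\setminus A$ meets $A$. (By Theorem~A, two distinct blocks at nerve-distance $2$ are disjoint, since a nonempty intersection is an edge of $N$.)

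With these in hand, $D=1$ is immediate: $B_0\cap B_1=W$ is nonempty and convex, so the soft fact gives $\partial B_0\cap\partial B_1=\partial W$. For $D\ge 2$, write the $N$-geodesic as $B_0=A_0,A_1,\dots,A_D=B_1$; the key step is that any $\xi\in\partial B_0\cap\partial B_1$ lies in $\partial A_i$ for every $0<i<D$. To see this, suppose $\xi\notin\partial A_i$ and pick $p_0\in B_0$, $p_1\in B_1$. The rays $r_0=[p_0,\xi)\subseteq B_0$ and $r_1=[p_1,\xi)\subseteq B_1$ are asymptotic, so $\sup_t d(r_0(t),r_1(t))=:R<\infty$, while $d(r_0(t),A_i)\to\infty$ by the soft fact. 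For $t$ large the geodesic $[r_0(t),r_1(t)]$ then has length $\le R$ but joins a point of $B_0\setminus A_i$ to a point of $B_1\setminus A_i$, so it must meet $A_i$, giving $d(r_0(t),A_i)\le R$ --- a contradiction. The same argument, with $A_i$ replaced by the middle block $B_{\frac12}=A_1$, applies when $D=2$.

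Now fix $\xi\in\partial B_0\cap\partial B_1$. By the key step together with the hypotheses on $A_0$ and $A_D$, we have $\xi\in\partial A_j$ for all $0\le j\le D$, so the soft fact puts $\xi$ on $\partial(A_j\cap A_{j+1})=\partial W_j$ for each wall $W_j=A_j\cap A_{j+1}$ along the geodesic. The walls $W_0,W_1$ lie in $A_1$ and are disjoint, since $W_0\cap W_1\subseteq A_0\cap A_2=\emptyset$; writing $A_1=Y\times\R$, $W_0=\ell_0\times\R$, $W_1=\ell_1\times\R$, disjointness of $W_0,W_1$ forces $\ell_0,\ell_1\subseteq Y$ to be disjoint lines in the tree $Y$. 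If $\xi$ were not a pole of $A_1$, then all rays in $A_1$ asymptotic to $\xi$ would project to rays in $Y$ heading to one common ideal point $\eta\in\partial Y$; taking such a ray inside $W_0$ puts $\eta\in\partial\ell_0$, and one inside $W_1$ puts $\eta\in\partial\ell_1$, contradicting $\partial\ell_0\cap\partial\ell_1=\emptyset$. Hence $\xi$ is a pole of $A_1$. When $D=2$ this gives $\partial B_0\cap\partial B_1\subseteq\{\text{poles of }B_{\frac12}\}$, and the reverse inclusion holds because each pole of $B_{\frac12}$ lies on $\partial W_0\subseteq\partial B_0$ and on $\partial W_1\subseteq\partial B_1$; this proves (2). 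When $D\ge 3$, the identical argument applied to the walls $W_1,W_2\subseteq A_2$ (again disjoint, as $W_1\cap W_2\subseteq A_1\cap A_3=\emptyset$) also makes $\xi$ a pole of $A_2$; but $A_1$ and $A_2$ are adjacent, and adjacent blocks share no pole, so no such $\xi$ exists and $\partial B_0\cap\partial B_1=\emptyset$, which is (3).

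I expect the main obstacle to be the structural input itself --- specifically the separation property (that $X$ is genuinely a tree of spaces over its nerve, which drives the key step) and the non-coincidence of poles of adjacent blocks. Neither follows from abstract $\mathrm{CAT}(0)$ geometry; they encode the precise combinatorics of how Croke and Kleiner glue the pieces together, and so must be established by unwinding that construction. Everything else above is bookkeeping with the soft fact and the tree $N$.
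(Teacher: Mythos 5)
Your proposal is correct, and it follows essentially the same route as the paper's proof of the analogous Theorem B$'$ (the paper only cites Theorem B from \cite{CK} and proves B$'$ for the generalized construction): $D=1$ via convexity/separation through the wall, the key step forcing a common boundary point into every intermediate block and hence into consecutive wall boundaries, the product (tree$\times\R$, resp.\ $E\times H$) structure of the middle block forcing such a point to be a pole, and distinctness of poles of adjacent blocks (positive skew) killing $D>2$. Your minor variations --- using $\partial C\cap\partial C'=\partial(C\cap C')$ instead of the paper's sequence-in-the-wall argument, and ends of disjoint lines in the tree instead of the gluing-point argument in the hedge --- are cosmetic, and the structural inputs you flag are exactly what the paper establishes (its itinerary lemma, the splitting lemma, and the skew $\theta>0$).
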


A \textit{local path component} of a point in a space is a path component of an open neighborhood of that point.\\

\begin{ThmC} \cite[Le 4]{CK}
Let $B$ be a block and \(\zeta\in\partial B\) not be a pole of any neighboring block.  Then $\zeta$ has a local
path component which stays in \(\partial B\).
\end{ThmC}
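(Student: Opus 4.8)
The plan is to localize the problem with Theorem B and then to exploit the topology of the suspension of a Cantor set: a path issuing from $\zeta$ should not be able to leave $\partial B$ without first running through a pole of one of the blocks adjacent to $B$, so a neighborhood of $\zeta$ small enough to miss those poles will witness the conclusion.

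First I would pin down the blocks $B'$ with $\zeta\in\partial B'$. By Theorem B(3) no block at distance greater than $2$ from $B$ in the nerve qualifies; by Theorem B(2) a block at distance $2$ qualifies only when $\zeta$ is a pole of the intervening block, which is a neighbor of $B$ and hence excluded by hypothesis; so by Theorem B(1) the only qualifying blocks besides $B$ are the neighbors $B_i$ with $\zeta\in\partial B\cap\partial B_i=\partial W_i$, where $W_i=B\cap B_i$. From the construction the circle $\partial W_i$ runs through the two poles of $B_i$, so inside the suspension $\partial B_i$ it is a union of two arcs joining those poles, and since $\zeta$ is not a pole of $B_i$ it lies in the interior of one of them. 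One may as well assume $\zeta$ is not a pole of $B$ itself either; if it is, the argument below is applied to a cone neighborhood of that pole.

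The mechanism rests on the following elementary remark: in the suspension $\Sigma C$ of a Cantor set $C$, deleting the two suspension points leaves $C\times(0,1)$, whose path components are the open arcs $\{c\}\times(0,1)$. Hence any path inside $\partial B_i$ that starts on the circle $\partial W_i$ and ever leaves $\partial W_i$ must pass through a pole of $B_i$. Granting a neighborhood $U$ of $\zeta$ in $\partial X$ that contains no pole of $B$ and no pole of any neighbor of $B$, one argues as follows: if $\gamma\colon[0,1]\to U$ is a path with $\gamma(0)=\zeta$ that leaves $\partial B$, set $\tau=\inf\{t:\gamma(t)\notin\partial B\}$; then $\gamma(\tau)\in\partial B\cap U$, while along a sequence of parameters decreasing to $\tau$ the path lies in some $\partial B'\setminus\partial B=\partial B'\setminus\partial W'$ with $B'$ a neighbor of $B$. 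Since $\partial B'$ is closed this forces $\gamma(\tau)\in\partial B'\cap\partial B=\partial W'$, so $\gamma$ crosses from the circle $\partial W'\subseteq\partial B$ into $\partial B'\setminus\partial W'$ without leaving $U$ — impossible by the remark, because that crossing requires a pole of $B'$ and $U$ contains none. Thus $\gamma([0,1])\subseteq\partial B$, and the path component of $\zeta$ in $U$ stays in $\partial B$.

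The step I expect to be the main obstacle is supplying the neighborhood $U$ and justifying the local picture of $\partial X$ near $\zeta$ used in the previous paragraph (that near $\zeta$ every boundary point outside $\partial B$ lies on the boundary of a neighbor of $B$, and that the relevant adjacent block boundaries can be isolated from one another by passing to a small enough $U$). A block of the Croke--Kleiner space typically has infinitely many neighbors, infinitely many of whose wall circles pass through $\zeta$, so one must control — from the explicit geometry — how those boundaries, and in particular the poles of those neighbors, are arranged near $\zeta$, and also rule out stray boundary points (points lying on no single block boundary) accumulating there. None of this follows from the formal content of Theorems A and B; it requires the explicit construction of \cite{CK}, where the walls are glued with identification angles taken from a fixed finite list. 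Those angles determine exactly where the poles of a neighbor can sit on the corresponding wall circle, and since $\zeta$ is, by hypothesis, none of those poles, this is what makes the required visual‑metric separation available.
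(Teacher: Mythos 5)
Your argument hinges on ``a neighborhood $U$ of $\zeta$ that contains no pole of $B$ and no pole of any neighbor of $B$,'' and your closing remark asserts that such a $U$ exists because $\zeta$ is, by hypothesis, not one of those poles. That is the genuine gap: $B$ has infinitely many neighboring blocks, and their poles accumulate. By Theorem E, $\overline{\Pi B}$ is the entire set of points of $\partial B$ at Tits distance $\theta$ from a pole of $B$, which is strictly (indeed uncountably) larger than the countable set $\Pi B$ itself. Any $\zeta\in\overline{\Pi B}\setminus\Pi B$ satisfies the hypothesis of Theorem C --- it is not a pole of any neighboring block --- and yet \emph{every} neighborhood of $\zeta$ contains infinitely many poles of neighbors. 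Being distinct from each member of an infinite set does not separate you from its closure, so for these points your mechanism (``any crossing requires a pole of $B'$, and $U$ contains none'') cannot even be set up, and the appeal to the finite list of gluing angles does not help, since the accumulation of $\Pi B$ at $\zeta$ has nothing to do with which angle was chosen.

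There is a second, related gap you partly flag yourself but do not close: after the first exit time $\tau$ you assert the path lies in $\partial B'\setminus\partial W'$ for some \emph{neighbor} $B'$. Points of $\partial X\setminus\partial B$ arbitrarily close to $\partial B$ need not lie in any neighboring block boundary --- irrational points and boundaries of blocks far away in the nerve also accumulate on $\partial B$ --- so this step needs an argument about the ``shadow'' regions behind the walls of $B$, not just Theorem B. And even where the path does sit in $\partial B'$, the suspension remark only controls motion \emph{within} $\partial B'$: the path may leave $\partial B'$ into the region behind another wall of $B'$ and re-enter along the other suspension arc by passing through poles of blocks at distance $\ge 2$ from $B$, which your $U$ was never required to avoid. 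Closing these holes requires quantitative geometric input from the construction --- this is exactly what the proof of \cite[Le 4]{CK} supplies, working with the singular geodesics (in this paper, the flats $E^m_-$ and $E^n_+$) that form the topological frontier of a block --- and the present paper does not reprove the statement but imports that argument by citation, adapting it only by substituting those frontier flats for the singular geodesics. So the statement is not a formal consequence of Theorems A and B plus the suspension picture, which is essentially all your proposal uses.
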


\begin{ThmD} \cite[Co 8]{CK}
The union of block boundaries in $\partial X$ is the unique dense \textit{safe path} component
of \(\partial X\).
\end{ThmD}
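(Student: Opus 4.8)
The plan is to verify three properties of $\B:=\bigcup_B\partial B$, the union of the block boundaries: that $\B$ is dense in $\partial X$, that $\B$ forms a single safe path component, and that it is the only dense safe path component. The first is easiest: since the blocks are closed, convex, and cover $X$, a geodesic ray $c$ from the basepoint to a given $\xi\in\partial X$ either eventually remains in one block $B$, whence $\xi\in\partial B\subseteq\B$, or it meets infinitely many blocks, in which case endpoints of geodesic rays that agree with $c$ along a long initial segment and then continue geodesically inside the last block encountered lie in $\B$ and approximate $\xi$ in the cone topology. In particular $\partial X\setminus\B$ has empty interior, a fact I will reuse for uniqueness.

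Next I would show that $\B$ is path-connected by safe paths. Each block boundary $\partial B$ is the suspension of a Cantor set and is therefore path-connected, and the connecting paths can be taken to lie inside the single block boundary $\partial B$, so they are safe. If $B_0$ and $B_1$ are blocks at distance $1$ in the nerve $N$, then by Theorem B(1) their boundaries meet in the circle $\partial W$ with $W=B_0\cap B_1$, so $\partial B_0\cup\partial B_1$ is path-connected by safe paths; and since $N$ is a tree by Theorem A, any two blocks are joined by a finite chain of successively adjacent blocks, so concatenating safe paths puts all of $\B$ into a single safe path component.

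The crux --- and the step I expect to give the most trouble --- is to show that this safe path component is exactly $\B$: no safe path that starts in $\B$ can leave it. Given a safe path $\gamma\colon[0,1]\to\partial X$ with $\gamma(0)\in\B$, I would prove that $T:=\{t\in[0,1]:\gamma(t)\in\B\}$ is both open and closed. When $\gamma(t)\in\partial B$ is not a pole of any block neighboring $B$, Theorem C yields a neighborhood of $\gamma(t)$ whose path component stays in $\partial B$, and continuity of $\gamma$ then forces an entire subinterval about $t$ into $\B$. The delicate situations are those in which $\gamma(t)$ is a pole: there the boundaries of several blocks come together (by parts (1) and (2) of Theorem B) and, a priori, $\gamma$ could pass from one block boundary to another, and iterating such passages could in principle carry it out of $\B$ into the set of ``ends'' of the tree $N$, namely the points whose defining rays cross infinitely many walls, which by Theorem B(3) form a region in which block boundaries are pairwise disjoint and which is therefore disjoint from $\B$. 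Excluding this kind of escape is exactly the role of the hypothesis that the path be \emph{safe}, and the same control is what makes $T$ closed.

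Once $\B$ is known to be precisely one safe path component, uniqueness follows immediately: any other safe path component lies inside $\partial X\setminus\B$, which has empty interior and hence is not dense; therefore $\B$ is the unique dense safe path component, which is the content of Theorem D.
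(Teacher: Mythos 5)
The heart of Theorem D --- that a safe path which starts in the union of block boundaries $\B$ can never leave it --- is named in your proposal but not proved. You correctly locate the difficulty at the poles, but ``excluding this kind of escape is exactly the role of the hypothesis that the path be safe'' is a restatement of the theorem, not an argument. Your open-and-closed framing also has a structural problem: the set $T=\{t:\gamma(t)\in\B\}$ is genuinely \emph{not} open at times when $\gamma(t)$ is a pole, since irrational points accumulate at poles (a ray can shadow the pole ray for a long time and then wander through infinitely many blocks), so openness only holds away from the finitely many vertex-passage times and the whole content of the theorem is what happens at and after those times. The paper's proof of the analogous Theorem D$'$ supplies exactly this missing piece: set $\overline t=\inf\{t:\alpha(t)\notin RX\}$, use safety to choose the \emph{last} time $s<\overline t$ at which the path passes through a pole, and show via Theorem C$'$ that on $(s,\overline t)$ the path is trapped in a single block boundary --- to migrate to a neighboring block boundary it would have to run through a wall boundary and then exit that neighbor through another pole, contradicting the choice of $s$ --- whence, block boundaries being closed, $\alpha(\overline t)\in\partial B\subset\B$, a contradiction. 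A related smaller gap: a path lying in a single $\partial B$ is \emph{not} automatically safe, because $\Pi B$ (the poles of neighboring blocks) is dense in the latitude circles of $\partial B$; one must choose suspension/join arcs, which meet only finitely many vertices, as the paper does explicitly.

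Your uniqueness step is based on a false implication: ``empty interior, hence not dense'' fails in general ($\mathbb{Q}\subset\R$), and it fails here, since $\partial X\setminus\B=IX$ is itself dense in these examples; so no argument of that shape can rule out a second dense safe path component. The paper instead uses the irrational map $IX\to\partial\N$ determined by itineraries (Corollary \ref{co:irrationalmap}): any safe path component other than $RX$ lies in $IX$, and since $\partial\N$ is totally disconnected, such a component is carried to a single point of $\partial\N$, hence lies in one fiber and cannot be dense. Some use of itineraries of this kind is indispensable for the uniqueness claim, and it is absent from your proposal.
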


The definition of \textit{safe path} will be given in Section \ref{sec:spcomps}.
For now, it suffices to understand that
Theorem D gives a way to topologically distinguish the union of block boundaries in $\partial X$.  With these
thereoms in hand, it is not hard to prove that given two constructions $X_1$ and $X_2$, any homeomorphism
\(\partial X_1\to\partial X_2\) takes poles to poles, block boundaries to block boundaries, and wall boundaries
to wall boundaries.  The last piece of the puzzle is Theorem E.  Given \(0<\theta\le\pi/2\), we can construct
$X_{\theta}$ in such a way that the minimum distance between poles is $\theta$.
This distance is in the
sense of the Tits path metric on the boundary of a block containing both poles.
For a block $B$, we denote by $\Pi B$
the set of poles of blocks which intersect $B$ at a wall.\\

\begin{ThmE} \cite[Le 9]{CK} (also \cite[Prop 2.2]{Wi})
For a block $B$, the union of boundaries of walls of $B$ is dense in \(\partial B\) and
\(\overline{\Pi B}\) is precisely the set of points of \(\partial B\) which are a Tits distance
of $\theta$ from a pole of $B$.
\end{ThmE}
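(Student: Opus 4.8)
The plan is to reduce both assertions to statements about the Cantor set $C$ of which $\partial B$ is the suspension, and to read them off from the combinatorics of the construction together with the flat geometry of the walls. First I would fix the following picture of $\partial B$ together with its Tits metric: there are the two poles $p_+,p_-$ of $B$, and for each $c\in C$ a \emph{meridian}, i.e.\ a Tits geodesic from $p_+$ to $p_-$ passing through $c$; write $(c,\alpha)$ for the point of this meridian at Tits distance $\alpha$ from $p_+$. Each wall $W$ of $B$ is a flat plane one of whose two line-directions is the axis direction of $B$ (the direction of the poles), so $\partial W$ is a round circle of circumference $2\pi$ through $p_+$ and $p_-$; hence $\partial W$ is the union of the meridians over a pair of \emph{feet} $\{c^W_1,c^W_2\}\subset C$, the map $W\mapsto\{c^W_1,c^W_2\}$ is injective, and I write $E=\bigcup_W\{c^W_1,c^W_2\}$. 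Finally set $L_\pm(\theta)=\{\zeta\in\partial B:d_{\mathrm{Tits}}(\zeta,p_\pm)=\theta\}$; the structure theory presents $L_+(\theta)$, via $(c,\theta)\mapsto c$, as a homeomorphic copy of $C$, and similarly $L_-(\theta)$.

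For the first assertion: the union of the wall boundaries is exactly the union of the meridians over $E$, and a union of meridians over $S\subseteq C$ is dense in $\partial B$ if and only if $S$ is dense in $C$; so the claim is equivalent to $E$ being dense in $C$. This I would extract from the construction. Here $\mathrm{Stab}(B)$ acts cocompactly on $B$ with only finitely many orbits of walls, and translating one wall by this group --- already by the free-abelian part, which acts on $C$ with north--south dynamics, together with the self-similarity forced by Theorem~A --- yields walls whose feet meet every nonempty open subset of $C$; the same mechanism, applied to the ``distinguished'' feet introduced below, shows that they too are dense. This ``walls are everywhere'' statement is the point at which the specifics of the construction genuinely enter.

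For the second assertion, fix a wall $W$ and let $B_W$ be the block with $B\cap B_W=W$. The two poles $q^W_+,q^W_-$ of $B_W$ are the endpoints of the axis direction of $B_W$, which is a line-direction of the flat plane $W$, so they lie on $\partial W\subseteq\partial B$, and on the round circle $\partial W$ the angle from $q^W_+$ to $p_+$ (and from $q^W_-$ to $p_-$) equals the angle in $W$ between the axis directions of $B$ and $B_W$, which the construction of $X_\theta$ arranges to be $\theta$. Hence $d_{\mathrm{Tits}}(q^W_\pm,p_\pm)\le\theta$ in $\partial B$; but $q^W_\pm$ and $p_\pm$ are both poles, so this distance is at least the minimum pole distance, which is $\theta$. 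Therefore $q^W_+\in L_+(\theta)$ and $q^W_-\in L_-(\theta)$, giving $\Pi B\subseteq L_+(\theta)\cup L_-(\theta)$ and hence $\overline{\Pi B}\subseteq L_+(\theta)\cup L_-(\theta)$. Conversely the same computation shows $q^W_+=(c,\theta)$ for one foot $c$ of $W$ (the ``distinguished'' one) and $q^W_-=(c',\pi-\theta)$ for the other; as $W$ ranges over the walls of $B$ these distinguished feet are dense in $C$ by the previous paragraph, so under $L_+(\theta)\cong C$ the points $q^W_+$ are dense in $L_+(\theta)$, and symmetrically the $q^W_-$ are dense in $L_-(\theta)$. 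With the earlier inclusion this gives $\overline{\Pi B}=L_+(\theta)\cup L_-(\theta)$, which is the assertion.

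I expect the fight to be in two places. The density of $E$ (and of the distinguished feet) in $C$ is purely combinatorial and requires pinning down exactly which flat planes of $B$ occur as walls and how $\mathrm{Stab}(B)$ permutes them; this is the real content of the first assertion. The reverse inclusion in the second assertion needs the continuity hidden in ``$L_+(\theta)\cong C$ via the foot map'' --- that feet converging in $C$ force the corresponding distinguished poles to converge, in the cone topology, to the right point of $L_\pm(\theta)$. Both points rest on the structure theory that presents $\partial B$ as a suspension with continuously varying meridians, which I would set up (or cite) before anything else.
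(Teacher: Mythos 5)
Your overall skeleton is sound and runs parallel to what this paper does for its hyperblock analogue: you describe $\partial B$ via a level/meridian parametrization (the paper's quotient map \(q:PY\times\partial H\times[0,\pi/2]\to\partial Y\)), you locate the poles of a neighboring block on the circle \(\partial W\) at angle $\theta$ from the poles of $B$, and you finish by combining density of the ``distinguished feet'' with continuity of the level parametrization in the cone topology. Two small remarks on that half: the exact equality \(\angle_{Tits}(q^W_\pm,p_\pm)=\theta\) comes most cleanly from the fact that the relevant rays bound a flat sector inside the wall (the criterion of \cite[Co 9.9]{BH}, which is how the paper gets \(\angle_{Tits}(PY,S_L)=\theta\)); this spares you the slightly circular appeal to a ``minimum pole distance $\theta$''. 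Also note that for $\theta<\pi/2$ you need density of \emph{both} families of distinguished feet (the $\theta$-level foot and the $(\pi-\theta)$-level foot of each wall), though the same mechanism does give both.

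The genuine gap is the density claim itself, which carries the first assertion and the reverse inclusion of the second, and which you defer to ``north--south dynamics of the free-abelian part plus self-similarity forced by Theorem A.'' As stated this does not work: the central free abelian factor of the block stabilizer (the \(\Z\), resp.\ \(\Z^m\), factor) acts \emph{trivially} on the Cantor set \(C=\partial H\) (cf.\ Lemma \ref{le:qcomputation}), while the non-central abelian subgroup stabilizes a single leaf, so the north--south dynamics of one of its elements only produces accumulation at the two ends of that leaf, not density of the set of feet; and Theorem A (the nerve is a tree) supplies no self-similarity or minimality statement by itself. The paper's replacement is Lemma \ref{le:density}, which avoids dynamics entirely: any set containing one boundary point from each leaf of a fixed parity is dense in \(\partial H\), because an irrational ray has an alternating itinerary \([L_1,L_2,\dots]\) and the rays from the basepoint to the chosen points of the leaves \(L_{2i}\) agree with it along longer and longer initial segments; combined with the density of irrational points (which uses quasi-density of the gluing points and the infiniteness of $\Gamma$), this yields exactly the density of wall boundaries and of your distinguished feet. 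If you insist on a dynamical route you would need minimality of the \(\Gamma\ast\Z^n\)-action on \(\partial H\) (equivalently, density of the orbit of a single leaf-end), and proving that again comes down to an itinerary argument of this type; either way, this step has to be written out, and, as you yourself note, it is the real content of the statement.
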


Using these five theorems, we get the following statement.\\

\begin{ThmF}
Let $B$ be a block and $L$ be a suspension arc of \(\partial B\).  Then \(\bigl|L\cap\overline{\Pi B}\bigr|=1\) iff
\(\theta=\pi/2\).
\end{ThmF}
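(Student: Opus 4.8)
The plan is to use Theorem E to reduce the statement to an elementary computation along $L$ in the Tits metric. Recall that $\partial B$ is the suspension of a Cantor set, so it has exactly two poles, namely the poles $p_0$ and $p_1$ of $B$; with respect to the Tits metric $d$ on $\partial B$ these are antipodal, $d(p_0,p_1)=\pi$, and the suspension arc $L$ is a geodesic of length $\pi$ joining $p_0$ to $p_1$. I would fix an arc-length parametrization $\ell\colon[0,\pi]\to\partial B$ of $L$ with $\ell(0)=p_0$ and $\ell(\pi)=p_1$.

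The first step is to compute the Tits distance in $\partial B$ from $\ell(s)$ to each pole of $B$. Moving along $L$ gives $d(p_0,\ell(s))\le s$ and $d(p_1,\ell(s))\le\pi-s$, while the triangle inequality gives $d(p_0,\ell(s))+d(p_1,\ell(s))\ge d(p_0,p_1)=\pi$. Hence both inequalities are equalities: $d(p_0,\ell(s))=s$ and $d(p_1,\ell(s))=\pi-s$. Therefore $\ell(s)$ lies at Tits distance $\theta$ from a pole of $B$ if and only if $s=\theta$ or $s=\pi-\theta$.

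The second step is to apply Theorem E, which identifies $\overline{\Pi B}$ with the set of points of $\partial B$ at Tits distance $\theta$ from a pole of $B$. Combining this with the first step gives $L\cap\overline{\Pi B}=\{\ell(\theta),\ell(\pi-\theta)\}$. Since $0<\theta\le\pi/2$, both parameters $\theta$ and $\pi-\theta$ lie in $[0,\pi]$, and $\ell(\theta)=\ell(\pi-\theta)$ holds if and only if $\theta=\pi-\theta$ (because $\ell$ is injective), i.e. if and only if $\theta=\pi/2$. Thus $|L\cap\overline{\Pi B}|$ equals $1$ when $\theta=\pi/2$ and equals $2$ when $\theta<\pi/2$, which is exactly the claimed equivalence.

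I do not expect a genuine obstacle here; the content lies entirely in Theorem E together with the suspension structure of the block boundary. The one point that needs care is that the distances involved are Tits distances measured throughout $\partial B$, not merely along $L$; this is handled by the triangle-inequality argument above, which uses only that the two poles of $B$ are antipodal in the Tits metric and that $L$ realizes a geodesic of length $\pi$ between them.
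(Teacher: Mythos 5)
Your argument is correct and is exactly the deduction the paper intends: it states Theorem F as an immediate consequence of Theorem E together with the suspension (spherical join) structure of $\partial B$, without writing out the computation. Your careful step that Tits distances from $\ell(s)$ to the poles are realized along $L$ (via $d(p_0,p_1)=\pi$ and the triangle inequality) is the right way to make that deduction precise, so nothing further is needed.
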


Therefore \(\partial X_{\pi/2}\not\approx\partial X_\theta\) for any \(\theta<\pi/2\), which is the main theorem
of \cite{CK}.  In this article, we produce for every group $G$ in question a pair of cocompact CAT(0) $G$-spaces $X$ and $X'$
and show that Theorems A-D still hold.  These four theorems, along with an analogue to Theorem E will
be used to prove that there is no homeomorphism \(\partial X\to\partial X'\).

\section{Block Structures on CAT(0) Spaces}
	\label{sec:NervesandItineraries}
We begin by observing that the work in Sections 1.4-5 of \cite{CK} does not depend on the specific
construction used in in \cite{CK}.  The same observations apply if we replace their definition of a
\textit{block} with the following one.\\

\begin{defn}
\label{defn:blockstructure}
Let $X$ be a CAT(0) space and \(\mathcal{B}\) be a collection of closed convex subspaces covering $X$.
We call $\mathcal B$ a \textit{block structure on $X$} and its elements \textit{blocks} if
\(\mathcal{B}\) satisfies the following three properties:\\
\begin{enumerate}
\item Every block intersects at least two other blocks.\\
\item Every block has a $(+)$ or $(-)$ parity such that two blocks intersect only if they have opposite parity.\\
\item There is an \(\epsilon>0\) such that two blocks intersect iff their $\epsilon$-neighborhoods intersect.\\
\end{enumerate}
\end{defn}

If we refer to blocks as \textit{left} or \textit{right}, we mean that the former have parity $(-)$ and the
latter have parity $(+)$.
The \textit{nerve} of a collection \(\mathcal{C}\) of sets is
the (abstract) simplicial complex with vertex set \(\{v_B|B\in\mathcal{C}\}\)
such that a simplex \(\{v_{B_1},...,v_{B_n}\}\) is included whenever
\(\bigcap_{i=1}^nB_i\neq\emptyset\).
In exactly the same way as in \cite{CK}, we get that the nerve $\N$ of the collection of blocks
is a tree, and we can define the itinerary of a geodesic.
A geodesic $\alpha$ is said to \textit{enter} a block if it passes through a point which is not in any other block.
The itinerary of $\alpha$ is defined to be the list
\([B_1,B_2,...]\) where $B_i$ is the $i^{\textrm{th}}$ block that $\alpha$ enters.
This list is denoted by \(\Itin\alpha\).
The following lemma follows in exactly the same way as \cite[Le 2]{CK},
which simply uses the fact that a block $B$ is convex and that
its topological frontier is covered by the collection of blocks corresponding to the link
in $\N$ of the vertex $v_B$.\\

\begin{lemma}
\label{le:itingeo}
If \(\Itin\alpha=[B_1,B_2,...]\), then \([v_{B_1},v_{B_2},...]\) is a geodesic in $\N$.\\
\end{lemma}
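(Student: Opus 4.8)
The plan is to imitate \cite[Le 2]{CK} directly, using only the three axioms of a block structure together with the fact (already established above) that $\N$ is a tree. First I would recall precisely what must be shown: if $\Itin\alpha=[B_1,B_2,\dots]$, then consecutive blocks $B_i,B_{i+1}$ are distinct and adjacent in $\N$ (so $v_{B_i}v_{B_{i+1}}$ is an edge), and moreover no vertex is repeated, so that $[v_{B_1},v_{B_2},\dots]$ is an embedded edge-path, hence — since $\N$ is a tree and therefore has no cycles and no backtracking is possible in a distance-minimizing sense — a geodesic. The key geometric input is that each block $B$ is closed and convex, so its topological frontier $\front B$ in $X$ is covered by the blocks meeting $B$, i.e. by $\{B' : v_{B'}\in\operatorname{lk}(v_B,\N)\}$; this is the observation flagged in the sentence preceding the lemma, and it follows from axiom (3) (an $\epsilon$-collar argument shows a frontier point of $B$ lies in some other block) together with convexity.

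The main steps I would carry out are: (1) \emph{Adjacency of consecutive entries.} When $\alpha$ leaves $B_i$ and next enters $B_{i+1}$, consider the last parameter $t$ at which $\alpha(t)\in B_i$. Since $B_i$ is closed and convex and $\alpha$ is a geodesic, $\alpha(t)$ is a frontier point of $B_i$ (the geodesic does not re-enter $B_i$ afterward, by convexity of $B_i$ — any excursion out of a convex set cannot return along a geodesic). By the frontier-covering fact, $\alpha(t)$ lies in some block $B'$ adjacent to $B_i$; a short argument identifies $B'$ with $B_{i+1}$ (the first block entered after $B_i$), giving $v_{B_i}v_{B_{i+1}}\in\N$. (2) \emph{Distinctness of consecutive entries.} $B_i\ne B_{i+1}$ is immediate since $\alpha$ enters $B_{i+1}$ at a point in no other block, in particular not in $B_i$; combined with axiom (2), consecutive blocks have opposite parity. (3) \emph{No repetition.} Suppose $B_i=B_j$ for some $i<j$. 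Then $\alpha$ enters $B_i$, leaves it, and later re-enters it — but convexity of $B_i$ forbids a geodesic from leaving a convex set and returning. Hence all entries are distinct, the edge-path $[v_{B_1},v_{B_2},\dots]$ is embedded, and in a tree an embedded edge-path is a geodesic.

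The main obstacle I expect is Step (1): carefully justifying that the ``next'' block entered after $B_i$ is actually \emph{adjacent} to $B_i$ in the nerve, rather than merely that $\alpha$ passes through some adjacent block in between. One must rule out $\alpha$ sneaking through a third region; this is handled by noting that the exit point $\alpha(t)$ from $B_i$ must itself lie in a block $B'$ with $v_{B'}$ adjacent to $v_{B_i}$, and that $\alpha$ enters $B'$ at or immediately after time $t$ (since just past $t$ the geodesic has left $B_i$ but is still near $\alpha(t)\in B'$, and by axiom (3) and closedness it must lie in $B'$ on a neighborhood), so $B'=B_{i+1}$. Everything else is the verbatim argument of \cite[Le 2]{CK}, since that proof, as noted in the excerpt, uses only convexity of blocks and the frontier-covering property, both of which hold under Definition \ref{defn:blockstructure}.
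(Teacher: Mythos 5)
Your overall route is exactly the one the paper invokes (the argument of \cite[Le 2]{CK}: consecutive entries are adjacent, convexity forbids a geodesic from returning to a block it has left, and an embedded edge path in the tree $\N$ is a geodesic), and steps (2) and (3) are fine as you state them. The weak point is the justification inside step (1). You claim that just past the exit time $t$ the geodesic ``must lie in $B'$ on a neighborhood \emph{by axiom (3) and closedness}.'' That inference does not work as stated: closedness of $B'$ does not pull nearby points into $B'$, and axiom (3) only converts $\epsilon$-closeness of two \emph{blocks} into nonempty intersection of those blocks; it does not say that a point $\epsilon$-close to $B'$ lies in $B'$. So, as written, you have not ruled out that $\alpha(t+\delta)$ lies in some other block, nor the scenario you yourself worry about, namely that $\alpha$ travels for a while through points belonging to two or more blocks (entering none of them) and then enters a block far from $B_i$ in the nerve.

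The missing ingredient is axiom (2) (parity), equivalently the absence of triangles in $\N$. For $0<\delta<\epsilon$ we have $\alpha(t+\delta)\notin B_i$, while $d\bigl(\alpha(t+\delta),B_i\bigr)<\epsilon$ and $d\bigl(\alpha(t+\delta),B'\bigr)<\epsilon$. If $C$ is any block containing $\alpha(t+\delta)$, then axiom (3) forces $C\cap B_i\neq\emptyset$ (and $C\neq B_i$), and likewise $C\cap B'\neq\emptyset$ unless $C=B'$. If $C\neq B'$, then $C$, $B'$, $B_i$ would pairwise intersect, which is impossible since only blocks of opposite parity intersect. Hence $C=B'$: immediately after time $t$ the geodesic lies in $B'$ and in no other block, so $\alpha$ enters $B'$ right away, $B_{i+1}=B'$, and $v_{B_i}v_{B_{i+1}}$ is an edge. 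With this substitution your step (1) is complete, and the rest of your outline goes through; note also that the same computation, together with closedness of $B'$, is what actually certifies the frontier-covering statement at the exit point, rather than the ``axiom (3) plus convexity'' remark in your first paragraph.
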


We may also talk about the \textit{itinerary between two blocks}.
If \([v_{B_1},...,v_{B_n}]\) is the geodesic edge path in $\N$ connecting
two vertices \(v_{B'_0}\) and \(v_{B'_1}\), then we call \([B_1,...,B_n]\) the itinerary between
\(B'_0\) and \(B'_1\) and write
\[
	\Itin[B'_0,B'_1]=[B_1,...,B_n].
\]
The two notions of itineraries are related in the following way:
The itinerary of a geodesic segment $\alpha$ is the shortest itinerary \(\Itin[B'_0,B'_1]\) for which $\alpha$ begins in
$B'_0$ and ends in $B'_1$.  Note also that the same observations which gave us Lemma \ref{le:itingeo}
also provide the following:\\

\begin{lemma}
\label{le:itins}
Let $B'_0$ and $B'_1$ be blocks, write \(\Itin[B'_0,B'_1]=[B_1,...,B_n]\), and let $\alpha$ be a geodesic
beginning in $B'_0$ and ending in $B'_1$.
Then:
\begin{enumerate}
\item $\alpha$ enters $B_k$ for every \(1<k<n\).\\
\item $\alpha$ passes through \(B_k\cap B_{k+1}\) for every \(1\le k<n\).\\
\item \(\bigcup_{k=1}^nB_k\) is convex.\\
\end{enumerate}
\end{lemma}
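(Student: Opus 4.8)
The plan is to prove the three parts in the order (2), (1), (3): part (1) will use (2), and (3) will reuse the separation argument behind (2). First I would record two facts valid for any block structure $\B$. Since $\N$ is a tree, no three distinct blocks pairwise intersect (such a triple would give a cycle of three edges in $\N$), so every point of $X$ lies in at most two blocks; moreover, taking $\epsilon$ as in part (3) of Definition~\ref{defn:blockstructure}, any ball of radius $\epsilon/2$ meets at most two blocks, because a point within $\epsilon/2$ of three distinct blocks would lie in all three of their $\epsilon$-neighborhoods and force them to pairwise intersect. Hence $\B$ is locally finite, the union of any subfamily of blocks is closed, and every (compact) geodesic segment meets only finitely many blocks. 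I also use throughout that for a geodesic $\gamma$ and a block $B$ the set $\gamma^{-1}(B)$ is a closed subinterval of the parameter domain, by convexity and closedness of $B$ and uniqueness of CAT(0) geodesics.

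For (2), fix $1\le k<n$ and delete the open edge $\{v_{B_k},v_{B_{k+1}}\}$ from the tree $\N$, obtaining two subtrees $\T^-\ni v_{B_k}$ and $\T^+\ni v_{B_{k+1}}$; since $[v_{B_1},\dots,v_{B_n}]$ is a geodesic, $v_{B_1},\dots,v_{B_k}\in\T^-$ and $v_{B_{k+1}},\dots,v_{B_n}\in\T^+$. Put $U^\pm=\bigcup\{B : v_B\in\T^\pm\}$. Then $U^-\cup U^+=X$, and the key point is that $U^-\cap U^+=B_k\cap B_{k+1}$: a common point lies in two blocks whose vertices are joined by an edge running between $\T^-$ and $\T^+$, and the deleted edge is the only such edge. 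As $U^\pm$ are closed, $\alpha^{-1}(U^-)$ and $\alpha^{-1}(U^+)$ are closed sets covering $[0,\ell]$, and both are nonempty because $\alpha$ begins in $B_1\subseteq U^-$ and ends in $B_n\subseteq U^+$; by connectedness they meet, and any $t$ in the intersection has $\alpha(t)\in B_k\cap B_{k+1}$.

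For (1), fix $1<k<n$. By (2) there are parameters $s,t$ with $\alpha(s)\in B_{k-1}\cap B_k$ and $\alpha(t)\in B_k\cap B_{k+1}$; these are distinct, since no point lies in the three distinct blocks $B_{k-1},B_k,B_{k+1}$. By convexity $\alpha$ lies in $B_k$ on the whole parameter interval between $s$ and $t$. If $\alpha$ never entered $B_k$, every such point would lie in $B_k$ and exactly one further block, and since $\alpha$ meets finitely many blocks the interval between $s$ and $t$ would be covered by the finitely many closed intervals $\alpha^{-1}(B^{(i)})$ for those further blocks $B^{(i)}$; these are pairwise disjoint (a shared parameter would put $\alpha$ in three blocks), so by connectedness there would be just one such $B^{(i)}$, whence $\alpha(s),\alpha(t)\in B^{(i)}$ would force $B_{k-1}=B^{(i)}=B_{k+1}$, a contradiction. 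Hence $\alpha$ enters $B_k$.

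For (3) I would show by induction on $q-p$ that $\bigcup_{i=p}^q B_i$ is convex for all $1\le p\le q\le n$, the case $p=1,\,q=n$ being the assertion; the base $q=p$ is Definition~\ref{defn:blockstructure}. For $q>p$, set $A=\bigcup_{i=p}^{q-1}B_i$ (convex by induction, and closed), take $w_1\in A$ and $w_2\in B_q$ (the cases with both points in $A$, or both in $B_q$, being immediate), and let $\gamma$ be the geodesic from $w_1$ to $w_2$. Deleting $\{v_{B_{q-1}},v_{B_q}\}$ from $\N$ and arguing as in (2) gives closed sets $U^-\supseteq A$ and $U^+\supseteq B_q$ with $U^-\cup U^+=X$ and $U^-\cap U^+=B_{q-1}\cap B_q$. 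Let $\sigma=\sup\{s:\gamma(s)\in U^-\}$; this is well defined and $\gamma(\sigma)\in U^-$ since $\gamma^{-1}(U^-)$ is closed. If $\sigma$ is the right endpoint, then $w_2\in U^-\cap U^+\subseteq B_{q-1}\subseteq A$ and $\gamma\subseteq A$ by convexity of $A$; otherwise $\gamma$ lies in $U^+$ just past $\sigma$, so $\gamma(\sigma)\in U^-\cap U^+=B_{q-1}\cap B_q$, and then $\gamma$ up to $\sigma$ joins the two points $w_1,\gamma(\sigma)$ of $A$ (hence lies in $A$) while $\gamma$ from $\sigma$ on joins the two points $\gamma(\sigma),w_2$ of $B_q$ (hence lies in $B_q$). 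Either way $\gamma\subseteq A\cup B_q=\bigcup_{i=p}^q B_i$, closing the induction. I expect part (3) to be the only real obstacle: one must exclude a geodesic between two points of $\bigcup_i B_i$ bulging out through a block hanging off the path $[v_{B_1},\dots,v_{B_n}]$, and it is precisely the edge-deletion together with the ``$\sigma$'' argument that forces such a geodesic to cross from the ``$A$ side'' to the ``$B_q$ side'' only through the wall $B_{q-1}\cap B_q$; the rest is bookkeeping with degenerate intervals and with the cases where $w_1$ or $w_2$ already lies in a wall, which need no new idea.
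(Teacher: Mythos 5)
Your proof is correct, and it takes a genuinely different (and more self-contained) route than the paper does. The paper offers no standalone argument for this lemma: it defers to the observations behind Lemma~\ref{le:itingeo}, i.e.\ to the proof of Lemma~2 of \cite{CK}, whose mechanism is ``frontier-crossing'' --- each block is convex and its topological frontier is covered by the blocks adjacent to it in $\N$, so one tracks which neighboring block a geodesic must hit as it exits a given block. You instead take as primary input the fact that $\N$ is a tree (asserted in the paper before this lemma, so legitimately available) and run a separation argument: deleting the edge $\{v_{B_k},v_{B_{k+1}}\}$ gives a closed cover $U^-\cup U^+=X$ with $U^-\cap U^+=B_k\cap B_{k+1}$, the closedness of $U^{\pm}$ coming from local finiteness, which you correctly extract from condition~(3) of Definition~\ref{defn:blockstructure} together with the fact (from parity, or from $\N$ being a tree) that no point lies in three distinct blocks; parts (1) and (3) then follow by clean connectedness and induction arguments on top of this, with the ``at most two blocks per point'' fact doing the work of ruling out backtracking and bulging. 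What your route buys is a complete, checkable proof of all three assertions --- in particular of (3), which the paper leaves entirely implicit --- without ever analyzing block frontiers directly; what it costs is length and a reliance on the prior statement that $\N$ is a tree, which the paper itself obtains by exactly the frontier-coverage observation, so the two mechanisms are cousins rather than strangers. The paper's citation-based treatment is shorter and keeps the exposition parallel to \cite{CK}, but it asks the reader to reconstruct precisely the kind of details you have supplied.
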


We call a geodesic ray \textit{rational} if its itinerary is finite and
\textit{irrational} if its itinerary is infinite.  A point of \(\partial X\)
is called \textit{irrational} if it is the endpoint of an irrational geodesic
ray; otherwise we call it \textit{rational}.  We denote the set of rational points
of \(\partial X\) by $RX$ and the set of irrational points by $IX$.\\

\begin{lemma}
\label{le:irrationaldistance}
Let $\alpha$ be an irrational geodesic ray.
Then for any block \(B_0\),
\[
	\lim_{t\to\infty}d\bigl(\alpha(t),B_0\bigr)=\infty.
\]
\end{lemma}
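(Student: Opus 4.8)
The plan is to reduce the lemma to a combinatorial statement about the tree $\N$, and then prove that statement using property (3) of block structures together with the convexity of distance functions in CAT(0) spaces.

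For the reduction, write $\Itin\alpha=[B_1,B_2,\ldots]$; this list is infinite because $\alpha$ is irrational, and by Lemma~\ref{le:itingeo} the vertices $v_{B_1},v_{B_2},\ldots$ form a geodesic ray in $\N$. For each $k$ the ray $\alpha$ enters $B_k$ at some parameter $t_k$, and since the blocks are entered in the listed order the $t_k$ are increasing; moreover $t_k\to\infty$, for otherwise a compact initial subsegment of $\alpha$ would enter infinitely many blocks, contradicting the fact that a compact geodesic segment has finite itinerary. Since $\alpha(t_k)\in B_k$ we have $d(\alpha(t_k),B_0)\ge d(B_0,B_k)$. The function $t\mapsto d(\alpha(t),B_0)$ is convex, being the restriction to the geodesic $\alpha$ of the convex function $d(\,\cdot\,,B_0)$ on the CAT(0) space $X$ (see \cite{BH}); and a convex function on $[0,\infty)$ that is unbounded along some sequence $t_k\to\infty$ is eventually monotone and hence tends to $\infty$. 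So it suffices to show that $d(B_0,B_k)\to\infty$ as $k\to\infty$.

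Because $v_{B_1},v_{B_2},\ldots$ is a geodesic ray in the tree $\N$, the distance $M_k:=d_\N(v_{B_0},v_{B_k})$ tends to $\infty$, so it is enough to bound $d(B_0,B_k)$ below in terms of $M_k$. Fix $k$, put $M=M_k$, and let $[v_{C_0},\ldots,v_{C_M}]$ be the geodesic in $\N$ from $v_{B_0}=v_{C_0}$ to $v_{B_k}=v_{C_M}$, so that $[C_0,\ldots,C_M]=\Itin[B_0,B_k]$. Since $\N$ is a tree, $C_a\cap C_b=\emptyset$ whenever $|a-b|\ge 2$, and property (3) then forces $d(C_a,C_b)\ge 2\epsilon$ for such $a,b$ (if some pair of points $p\in C_a$, $q\in C_b$ had $d(p,q)<2\epsilon$, the midpoint of $[p,q]$ would lie in the $\epsilon$-neighborhoods of both $C_a$ and $C_b$). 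Now fix $x\in B_0$ and $y\in B_k$. By Lemma~\ref{le:itins}(3) the union $U:=\bigcup_{i=0}^M C_i$ is convex, so the geodesic $[x,y]$ lies in $U$. For $0\le j<M$, every intersection $C_a\cap C_b$ with $a\le j<b$ is empty except $C_j\cap C_{j+1}$, so $\bigl(\bigcup_{i\le j}C_i\bigr)\cap\bigl(\bigcup_{i>j}C_i\bigr)=C_j\cap C_{j+1}$; since $[x,y]$ is a connected arc joining $C_0$ to $C_M$, the last point $r_j$ of $[x,y]$ lying in the closed set $\bigcup_{i\le j}C_i$ must lie in $C_j\cap C_{j+1}$. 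These points occur in strictly increasing order along $[x,y]$: the parameters are non-decreasing because the sets $\bigcup_{i\le j}C_i$ are nested, and strictly increasing because $r_j=r_{j+1}$ would place a point in $C_j\cap C_{j+2}=\emptyset$. Using that $r_j\in C_j$, $r_{j+1}\in C_{j+2}$, and that distances are additive along a geodesic,
\[
	d(x,y)\;\ge\;d(r_0,r_{M-1})\;=\;\sum_{j=0}^{M-2}d(r_j,r_{j+1})\;\ge\;2\epsilon(M-1).
\]
Taking the infimum over $x$ and $y$ yields $d(B_0,B_k)\ge 2\epsilon(M_k-1)\to\infty$, which proves the reduced claim and hence the lemma.

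The step I expect to need the most care is the "connectedness across a tree-patterned union" argument — verifying that the crossing points $r_j$ can be chosen to march monotonically along $[x,y]$, so that their consecutive distances genuinely add up to a lower bound for $d(x,y)$. This is the same style of bookkeeping used in \cite[Le~2]{CK} and in the proof of Lemma~\ref{le:itins}, and defining $r_j$ as the last point of $[x,y]$ in a nested family of closed sets is the clean way to force monotonicity. A minor point in the reduction also deserves attention: $d(\alpha(t),B_0)$ need not be eventually monotone a priori, but convexity together with unboundedness along $t_k\to\infty$ does force it to tend to $\infty$, which is all that is required.
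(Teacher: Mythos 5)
Your proof is correct and takes essentially the same route as the paper: the heart of both arguments is the claim that $d(B,B')$ grows linearly in $d_{\N}(v_B,v_{B'})$, obtained from property (3) of Definition \ref{defn:blockstructure} (disjoint blocks at tree distance $\ge 2$ are at least $2\epsilon$ apart) together with the fact that a geodesic between two blocks crosses the consecutive intersections along $\Itin[B_0,B_k]$ in order, which is exactly the paper's Claim. The only difference is in the final bookkeeping — you pass from divergence along the entry times $t_k$ to the full limit via convexity of $d(\,\cdot\,,B_0)$, whereas the paper compares $d(\alpha(t),B_0)$ with the distance to a fixed intermediate block $B_M$ — and your ordered crossing-point argument is, if anything, a more careful version of the paper's summation step.
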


\begin{proof}
Write \(\Itin\alpha=[B_1,B_2,...]\).  Since $\N$ is a tree, we can find \(M>1\)
so that for every \(m\ge M\), \(\Itin[B_0,B_m]\ni B_M\).
For $m\ge M$, choose a time $t_m$ such that \(\alpha(t_m)\in B_m\).
Then
\begin{align*}
	\lim_{t\to\infty}
	d\bigl(\alpha(t),B_0\bigr)
&\ge
	\lim_{t\to\infty}
	d\bigl(\alpha(t),B_M\bigr) \\
&=
	\lim_{m\to\infty}
	d\bigl(\alpha(t_m),B_M\bigr) \\
&\ge
	\lim_{m\to\infty}
	d\bigl(B_m,B_M\bigr).
\end{align*}
Hence, it suffices to prove the following.\\

\begin{claim}
Let $\epsilon$ be given as in condition (3) of Definition \ref{defn:blockstructure}.
Then whenever \(d(v_{B},v_{B'})\ge 2k\), we have \(d(B,B')\ge 2k\epsilon\).
\end{claim}
\indexspace

Note that whenever \(d(v_{B},v_{B'})=2\), then we have \(d(B,B')\ge 2\epsilon\)
because the $\epsilon$-neighborhoods of $B$ and $B'$ do not overlap.
Assume \(\Itin[B,B']=[B_0,B_1,...,B_n]\),
where \(n\ge 2k\).  Then for any \(x\in B\) and \(x'\in B'\),
the geodesic \([x,x']\) passes through
\(B_{2i}\) for \(0\le i\le k\) at some point $z_i$.
So
\begin{align*}
	d(x,x')
&=
	\sum_{i=0}^{k-1}d(z_i,z_{i+1}) \\
&\ge
	2k\epsilon.
\end{align*}
\end{proof}

\begin{corollary}
\label{co:RXIX}
\rule{0cm}{0cm}
\begin{enumerate}
\item \(RX\) is the union of block boundaries in \(\partial X\), and $IX$ is its complement.\\
\item If \(\zeta\in IX\), then every geodesic ray going out to $\zeta$ is irrational.\\
\item If \(\zeta\in IX\) and $\alpha$ and $\beta$ are geodesic rays going out to $\zeta$,
then the itineraries of $\alpha$ and $\beta$ eventually coincide.\\
\end{enumerate}
\end{corollary}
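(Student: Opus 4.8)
The plan is to derive all three statements from Lemma \ref{le:irrationaldistance} together with the basic bookkeeping about itineraries established in Lemmas \ref{le:itingeo} and \ref{le:itins}. For part (1), the containment ``block boundary $\subseteq RX$'' is essentially definitional: if $\zeta\in\partial B$ for some block $B$, then any geodesic ray $\alpha$ from the basepoint asymptotic to $\zeta$ which actually lies in $B$ (such a ray exists because $B$ is closed and convex, hence its own boundary is realized by rays in $B$) has itinerary of length one, so $\zeta$ is rational. Conversely, suppose $\zeta$ is rational, witnessed by a ray $\alpha$ with $\Itin\alpha=[B_1,\dots,B_n]$ finite. I would argue that $\alpha$ is eventually trapped in $B_n$: once $\alpha$ has entered $B_n$ it cannot leave, since leaving $B_n$ would mean passing through a point outside $B_n$, forcing the itinerary to continue past $B_n$ (here one uses Lemma \ref{le:itingeo}, that the itinerary traces a geodesic in the tree $\N$, so it cannot backtrack, and convexity of $B_n$ forbids re-entering). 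Hence the tail of $\alpha$ lies in $B_n$ and $\zeta\in\partial B_n$. This shows $RX$ is exactly the union of block boundaries, and $IX$ is by definition its complement.

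For part (2), suppose $\zeta\in IX$ and let $\alpha$ be any geodesic ray out to $\zeta$; I must rule out $\alpha$ being rational. If $\alpha$ were rational, then by the argument just given $\zeta\in\partial B$ for some block $B$, so $\zeta\in RX$, contradicting $\zeta\in IX$. For part (3), let $\alpha$ and $\beta$ be two rays out to $\zeta\in IX$, so both are irrational by part (2). The key point is that asymptotic rays stay within bounded distance of one another: there is a constant $C$ with $d(\alpha(t),\beta(t))\le C$ for all $t$ (this is standard in CAT(0) geometry, since the distance between two geodesic rays is a convex function, and here it is bounded at $0$ and does not go to infinity because they are asymptotic). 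Write $\Itin\alpha=[A_1,A_2,\dots]$ and $\Itin\beta=[B_1,B_2,\dots]$. I claim that for all sufficiently large indices, these lists agree. Suppose not; then there are arbitrarily late blocks $A_k$ entered by $\alpha$ which $\beta$ never enters. Using Lemma \ref{le:irrationaldistance} applied to $\beta$ but phrased for a fixed block that $\alpha$ enters very late — or more precisely, comparing the two geodesics in the tree $\N$ via Lemma \ref{le:itingeo} — the two itinerary-geodesics in $\N$ either eventually coincide or diverge; if they diverge, then $\alpha$ eventually enters blocks whose tree-distance from every block of $\beta$ tends to infinity, and then the Claim inside the proof of Lemma \ref{le:irrationaldistance} forces $d(\alpha(t),B)\to\infty$ for a block $B$ that $\beta$ passes through infinitely often, so $d(\alpha(t),\beta(t))\to\infty$, contradicting boundedness. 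Hence the two geodesics in $\N$ eventually coincide, i.e. the itineraries eventually agree.

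The main obstacle, and the step deserving the most care, is the ``trapping'' argument in part (1): making precise why a geodesic ray with finite itinerary $[B_1,\dots,B_n]$ has its entire tail inside $B_n$, rather than merely passing through $B_n$. One must invoke that $\Itin\alpha$ records \emph{every} block $\alpha$ enters, that by Lemma \ref{le:itingeo} this sequence is a (non-backtracking) geodesic in the tree $\N$, and that convexity of blocks together with condition (3) of Definition \ref{defn:blockstructure} prevents $\alpha$ from wandering out of $B_n$ into the complement of all blocks (impossible, the blocks cover $X$) or into a previously-visited block (impossible, $\N$ is a tree and the itinerary is geodesic) or into a new block (impossible, the itinerary is finite). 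Once this is nailed down, part (2) is immediate and part (3) reduces to the standard fact that asymptotic rays stay bounded distance apart, combined with the Claim already proved inside Lemma \ref{le:irrationaldistance}. I would also double-check the subtle point in part (3) that ``eventually coincide'' means the lists agree after some index, which requires noting that once the tree-geodesics $[v_{A_i}]$ and $[v_{B_j}]$ share a common infinite tail, the reindexing is by a fixed shift, consistent with the definition of itinerary.
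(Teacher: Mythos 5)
The main gap is in your treatment of the containment ``block boundaries $\subseteq RX$,'' which you dismiss as ``essentially definitional.'' Under the paper's definition, a point of $\partial X$ is \emph{irrational} as soon as it is the endpoint of \emph{one} irrational ray; so exhibiting a single rational ray to $\zeta\in\partial B$ (your ray lying inside $B$) does not show $\zeta\in RX$ --- you must rule out that some \emph{other} ray to $\zeta$ has infinite itinerary. That is precisely the content of Lemma \ref{le:irrationaldistance}, which is why the corollary is stated as a consequence of it: if $\alpha$ is an irrational ray to $\zeta$ and $\beta\subset B$ is a ray to $\zeta$, then $\alpha$ and $\beta$ stay a bounded distance apart (convexity of the distance function in a CAT(0) space), while the lemma gives $d(\alpha(t),B)\to\infty$, a contradiction. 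You never make this argument; you invoke the lemma only in part (3). As a result your part (2) is circular: from ``$\alpha$ rational $\Rightarrow\zeta\in\partial B_n$'' you conclude ``$\zeta\in RX$'' by citing the very containment that was not properly established, and indeed statement (2) is logically equivalent to that containment, so nothing in your write-up actually excludes a point being simultaneously the limit of a rational ray and of an irrational ray. (The ``trapping'' direction you worry about most is comparatively benign: the time set $\{t:\alpha(t)\in B\}$ is a closed interval by convexity, and even if the tail of a rational ray sits in a wall rather than in the interior of its last itinerary block, it still lies in some block, which is all that is needed.)

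Your part (3) has the right ingredients --- itineraries are geodesic rays in the tree $\N$, two such rays either share an end or diverge, and divergence contradicts the bounded distance between asymptotic rays via the Claim ($d(v_B,v_{B'})\ge 2k\Rightarrow d(B,B')\ge 2k\epsilon$) inside the proof of Lemma \ref{le:irrationaldistance} --- but the phrasing ``a block $B$ that $\beta$ passes through infinitely often'' is incoherent: by convexity an irrational ray leaves every block for good, and by the lemma its distance to each block tends to infinity. The correct deduction compares $\alpha(t_i)\in A_i$ (for $A_i$ far beyond the branch vertex) with a nearby point of $\beta$, which lies in some block of $\beta$'s itinerary by Lemma \ref{le:itins}(3), and then applies the Claim to contradict boundedness. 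Note that this same mechanism (bounded distance between asymptotic rays plus Lemma \ref{le:irrationaldistance}) is exactly what repairs parts (1) and (2); once you phrase it that way, all three statements fall out of the lemma in the way the paper intends.
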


A geodesic space is said to have the \textit{geodesic extension property} if every geodesic segment
can be extended to a geodesic line.  As is true with the original Croke-Kleiner construction,
the blocks we construct will satisfy the geodesic extension property.\\

\begin{lemma}
\label{le:denseRX}
If blocks have the geodesic extension property, then $RX$ is dense.
\end{lemma}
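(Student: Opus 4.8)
The plan is to work directly with the cone topology. Fix a basepoint $p\in X$; then a neighborhood basis at a point $\zeta\in\partial X$ consists of the sets of endpoints $\eta$ whose geodesic ray from $p$ remains within $\epsilon$ of the geodesic ray $\alpha$ from $p$ to $\zeta$ on an initial interval $[0,\rho]$, for $\epsilon>0$ and $\rho>0$. So it suffices to prove: for every $\zeta\in\partial X$ and every $\rho>0$ there is a rational point $\eta\in RX$ whose geodesic ray from $p$ agrees with $\alpha$ on all of $[0,\rho]$. If $\zeta$ is already rational there is nothing to prove, so I would assume $\zeta\in IX$, whence $\alpha$ is irrational by Corollary~\ref{co:RXIX}.

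The idea is then to ``turn'' $\alpha$ into a block far out and let it stay there. Write $\Itin\alpha=[B_1,B_2,\dots]$, an infinite list, and let $t_i$ be a time at which $\alpha$ enters $B_i$; these times are strictly increasing. Since $d(v_{B_1},v_{B_i})=i-1$, the estimate in the proof of Lemma~\ref{le:irrationaldistance} gives $d(B_1,B_i)\to\infty$, and as $\alpha$ is a unit-speed ray it follows that $t_i\to\infty$. Fix $i$ with $t_i\ge\rho$ and put $B=B_i$, $\rho'=t_i$. The point $\alpha(\rho')$ lies in no block other than $B$, and since the topological frontier of $B$ is covered by its neighboring blocks, $\alpha(\rho')$ lies in the interior of $B$; hence $\alpha$ maps a neighborhood of $\rho'$ into $B$, so $\alpha([s,\rho'])\subseteq B$ for some $s<\rho'$. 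Because $B$ is convex, $\alpha|_{[s,\rho']}$ is a geodesic segment of the CAT(0) space $B$; using the geodesic extension property of $B$, I would extend it to a geodesic ray $\gamma\colon[s,\infty)\to B$ agreeing with $\alpha$ on $[s,\rho']$, and then define $\beta$ to be $\alpha$ on $[0,s]$ followed by $\gamma$ on $[s,\infty)$.

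What remains is routine verification. The path $\beta$ agrees with the geodesic $\alpha$ on $[0,\rho']$ and with the geodesic $\gamma$ on $[s,\infty)$, and these two geodesic pieces coincide on the nondegenerate interval $[s,\rho']$; hence $\beta$ is a local geodesic, and therefore an honest geodesic ray, since local geodesics in a CAT(0) space are geodesics. For every $t\ge s$ one has $\beta(t)\in B$, so $\beta$ enters no block other than $B$ after time $s$; thus $\Itin\beta$ is the itinerary of the bounded segment $\alpha|_{[0,s]}$, possibly followed by $B$. That itinerary is finite because the entry times of $\alpha$ are strictly increasing and, by the estimate above, tend to infinity, so only finitely many precede $s$ (and it is a geodesic edge path in the tree $\N$ by Lemma~\ref{le:itingeo}). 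So $\beta$ is rational and $\beta(\infty)\in RX$. Finally, by uniqueness of geodesics $\beta$ is itself the ray from $p$ to $\beta(\infty)$, and it agrees with $\alpha$ on $[0,\rho]\subseteq[0,\rho']$, so $\beta(\infty)$ lies in the prescribed neighborhood of $\zeta$; since $\zeta$ and $\rho$ were arbitrary, $RX$ is dense.

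The conceptual content is slight; I expect the only real care to be bookkeeping: ensuring that the spliced ray $\beta$ still matches $\alpha$ past the given radius $\rho$ (which is why one branches off inside a block that $\alpha$ visits at a time $\rho'\ge\rho$, rather than at $\alpha(\rho)$ itself), and confirming that $\beta$ is genuinely geodesic by isolating an honest overlap interval on which it is locally geodesic.
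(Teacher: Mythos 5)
Your proposal is correct and is essentially the paper's own argument: the paper likewise truncates an irrational ray at a time $t_n$ inside the $n^{\textrm{th}}$ block of its itinerary, extends it (via the geodesic extension property) to a geodesic ray that never leaves that block, and observes that these rational rays converge to the given one. Your additional bookkeeping --- choosing the entry time beyond the prescribed radius, verifying the spliced path is a genuine geodesic via the local-geodesic criterion, and checking finiteness of the resulting itinerary --- merely makes explicit what the paper leaves implicit.
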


\begin{proof}
Let $\alpha$ be an irrational geodesic ray and write \(\Itin\alpha=[B_1,B_2,...]\).
For each \(n\ge 1\), let $t_n$ be a time at which \(\alpha(t_n)\in B_n\).
Then every ray \(\alpha|_{[0,t_n]}\) can be extended to a geodesic ray \(\alpha_n\)
which does not leave the block \(B_n\).  Then \(\alpha_n\to\alpha\).
\end{proof}

Given a space $Y$, we call a map \(\phi:IX\to Y\) an \textit{irrational map} if it satisfies the
property that \(\phi(a)=\phi(b)\) iff whenever $\alpha$ and $\beta$ are geodesic rays
going out to \(a\) and \(b\) respectively, then \(\Itin\alpha\) and \(\Itin\beta\)
eventually coincide.
The obvious candidate for such a map is the function \(\phi:IX\to\partial\N\) which takes
$a$ to the boundary point in $\partial\N$ determined by the itinerary of a ray going out to $a$.
This function is well-defined by Corollary \ref{co:RXIX}(3).
All we need to know is that $\phi$ is continuous, which amounts
to proving the following lemma:\\

\begin{lemma}
Let \((\alpha_n)\) be a sequence of irrational rays with common basepoint converging to
another irrational ray $\alpha$.  Then for every \(B\in\Itin\alpha\), we have \(B\in\Itin\alpha_n\)
for large enough $n$.
\end{lemma}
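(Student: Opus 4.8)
The plan is to argue locally. Write \(\Itin\alpha=[B_1,B_2,\ldots]\), fix an index \(k\), and show that \(B_k\in\Itin\alpha_n\) for all sufficiently large \(n\). By the definition of a geodesic \emph{entering} a block, there is a time \(t\) at which \(p:=\alpha(t)\) lies in \(B_k\) and in no other block. I would then want a radius \(\delta>0\) such that the ball \(B(p,\delta)\) is contained in \(B_k\) and disjoint from every other block; granting this, the conclusion is immediate, since convergence of the rays \(\alpha_n\) to \(\alpha\) from their common basepoint is uniform on compact sets, so \(\alpha_n(t)\in B(p,\delta)\) for large \(n\), whence \(\alpha_n(t)\) lies in \(B_k\) and in no other block, i.e. \(\alpha_n\) enters \(B_k\).

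The main point, then, is to produce \(\delta\), and the key observation is that conditions (2) and (3) of Definition \ref{defn:blockstructure} limit how many blocks can crowd near \(p\). Indeed, if three distinct blocks all met the ball \(B(p,\epsilon/4)\), where \(\epsilon\) is the constant of condition (3), then any two of them would contain points within \(\epsilon/2\) of one another, so their \(\epsilon\)-neighborhoods would intersect, and condition (3) would force all three to intersect pairwise --- contradicting the parity condition (2). Hence at most two blocks meet \(B(p,\epsilon/4)\); one of them is \(B_k\), and it is the only one containing \(p\), so at most one other block \(B'\) comes within \(\epsilon/4\) of \(p\). Since \(B'\) is closed and avoids \(p\), setting \(\delta:=\min\{\epsilon/4,\,d(p,B')\}\) (or \(\delta:=\epsilon/4\) if no such \(B'\) exists) gives a ball \(B(p,\delta)\) meeting no block but \(B_k\); because the blocks cover \(X\), this also forces \(B(p,\delta)\subseteq B_k\).

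I expect the step of bounding the local multiplicity of the block covering near \(p\) to be the crux; everything else is just unwinding the definition of entering a block together with the fact that geodesic rays with a common basepoint which converge do so uniformly on compacta. It is worth noting that this crux uses only the abstract axioms of a block structure --- in particular the parity condition --- so no appeal to cocompactness of a group action or to the specifics of the later construction is needed.
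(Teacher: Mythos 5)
Your proof is correct, but it takes a different route from the paper's. The paper works one block ahead: since $\alpha$ enters $B_{k+1}$, it asserts that $B_{k+1}$ is a neighborhood of $\alpha(t)$, concludes $\alpha_n(t)\in B_{k+1}$ for large $n$, and then invokes Lemma \ref{le:itins}(1) on the segment $\alpha_n|_{[0,t]}$ (which begins in $B_1$ and ends in $B_{k+1}$) to deduce that $\alpha_n$ enters $B_k$; merely landing in $B_{k+1}$ does not show that $B_{k+1}$ is entered, which is exactly why the combinatorial itinerary lemma is needed there. You instead prove a stronger local statement at $B_k$ itself: an entering point $p$ of $B_k$ has a ball $B(p,\delta)$ meeting no other block, obtained from the multiplicity bound that at most two blocks can meet $B(p,\epsilon/4)$ (via condition (3) of Definition \ref{defn:blockstructure} plus the parity condition (2)), and then convergence at the single time $t$ already forces $\alpha_n(t)$ to lie in $B_k$ and no other block, i.e.\ $\alpha_n$ enters $B_k$ by definition. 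This buys two things: your argument is self-contained, using only the block-structure axioms and not Lemma \ref{le:itins} or the tree structure of the nerve (nor the minor point that the common basepoint lies in $B_1$), and it actually supplies the justification for the step the paper states without proof, namely that an entering point is interior to its block in the strong sense that a uniform neighborhood avoids all other blocks. The paper's argument is shorter given the machinery already established; yours is more elementary and isolates the genuinely local fact doing the work.
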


\begin{proof}
Write \(\Itin\alpha=[B_1,B_2,...]\), and choose \(k\ge 1\).
Then \(B_{k+1}\) is a neighborhood of \(\alpha(t)\) for some time
$t$, which means that for large enough $n$, \(\alpha_n(t)\in B_{k+1}\).
Since \(\alpha_n|_{[0,t]}\) begins in $B_1$ and ends in $B_{k+1}$,
Lemma \ref{le:itins}(1) tells us that it must enter $B_k$.
\end{proof}

\begin{corollary}
\label{co:irrationalmap}
The natural map \(\phi:IX\to\partial\N\) determined by itineraries is an irrational map.
\end{corollary}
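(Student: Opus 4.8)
The plan is to derive the corollary by assembling three ingredients: that $\phi$ is well-defined, that $\phi$ satisfies the fiber condition in the definition of an irrational map, and that $\phi$ is continuous. The last of these is precisely the content of the lemma immediately preceding the corollary, so the real work is the first two, both of which rest on one elementary fact about trees: in a simplicial tree, two geodesic rays determine the same end if and only if they eventually coincide.

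First I would record well-definedness. If $a\in IX$ and $\alpha$ is a geodesic ray going out to $a$, then $\Itin\alpha=[B_1,B_2,\dots]$ is infinite, so by Lemma~\ref{le:itingeo} the sequence $[v_{B_1},v_{B_2},\dots]$ is a geodesic ray in the tree $\N$ and hence names a point of $\partial\N$; by Corollary~\ref{co:RXIX}(3) any two rays out to $a$ have eventually coinciding itineraries and therefore name the same point, so $\phi(a)$ depends only on $a$. For the fiber biconditional, suppose first that every pair of rays $\alpha\to a$, $\beta\to b$ has eventually coinciding itineraries; choosing one such pair and invoking the tree fact gives $\phi(a)=\phi(b)$. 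Conversely, if $\phi(a)=\phi(b)$, then for an arbitrary pair $\alpha\to a$, $\beta\to b$ the two itinerary rays name the same end of $\N$, and since $\N$ is a tree they must eventually coincide — which is exactly what the definition of an irrational map requires. Together with the continuity supplied by the preceding lemma (once one unwinds the cone topology on $\partial X$ and the end topology on $\partial\N$), this is the corollary.

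The step I expect to be the main obstacle is the unwinding that turns the preceding lemma into genuine continuity of $\phi$. After fixing the basepoint of $X$, a convergent sequence $a_n\to a$ in $IX$ corresponds in the cone topology to geodesic rays $\alpha_n$ from that basepoint converging to the ray $\alpha$ out to $a$, which is exactly the hypothesis of that lemma; the lemma then guarantees that each block $B_k$ of $\Itin\alpha$ lies in $\Itin\alpha_n$ for all large $n$. The subtlety is that $\alpha_n$ and $\alpha$ need not begin in the same block — if the basepoint lies in an intersection of blocks, their itineraries share only an eventual tail, not an initial segment — so one cannot simply say the itineraries agree up to stage $k$. The remedy is again pure tree combinatorics: the vertices $v_{B_k}$ march off to infinity along the geodesic ray of $\N$ representing $\phi(a)$, so the shadows they cast form a neighborhood basis of $\phi(a)$ in $\partial\N$, and two geodesic edge paths both passing through a vertex $v_{B_k}$ lying far out toward a common end must agree from some vertex onward; hence, for $k$ large, ``$B_k\in\Itin\alpha_n$'' forces $\phi(\alpha_n)$ into an arbitrarily small neighborhood of $\phi(\alpha)$, giving $\phi(a_n)\to\phi(a)$. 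Everything beyond this is bookkeeping with Lemma~\ref{le:itingeo}, Corollary~\ref{co:RXIX}, and the lemma directly preceding the corollary.
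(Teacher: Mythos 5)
Your proposal is correct and follows essentially the same route as the paper, which treats the corollary as immediate from well-definedness via Corollary \ref{co:RXIX}(3), the identification of ends of the tree $\N$ with eventual-coincidence classes of itineraries, and continuity supplied by the lemma immediately preceding the corollary. You simply spell out (a bit more carefully than the paper does) the translation between the cone topology on $\partial X$, the end topology on $\partial\N$, and the conclusion of that lemma.
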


\section{Some Local Homology Calculations}
\label{sec:LHtechlemmas}
Another tool we will use will be singular homology, with \cite[p.108-130]{Ha} as our reference.
Here are a couple of key technical lemmas.  All homology will be computed using $\Z$ coefficients.\\

\begin{lemma}
\label{le:usepathcomponents}
Local homology can be computed using local path components.  That is, for
a point $x$ in a topological space $X$ with local path component $\pi$, we have
\[
	H_\ast(X,X-x)=H_\ast(\pi,\pi-x).
\]
\end{lemma}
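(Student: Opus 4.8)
The plan is to use excision together with the fact that a local path component $\pi$ is, by definition, a path component of some open neighborhood $U$ of $x$ in $X$. First I would fix such a $U$ and observe that $\pi$ is both open and closed in $U$: it is open because $U$ is locally path connected at each of its points in the relevant sense (more carefully, path components of an open subset of a space need not be open in general, so one should instead note that the hypothesis implicitly provides a $U$ whose path components are open — this is how ``local path component'' is used throughout, and indeed in \cite{CK} — so I will take this as part of the setup). Granting that $\pi$ is open in $U$ and hence open in $X$, the complement $U \setminus \pi$ is also open, so $X \setminus \pi$ is a subset of $X \setminus x$ that is closed in $X$ and is in fact contained in the interior of $X \setminus x$; more precisely $X \setminus \pi = (X \setminus U) \cup (U \setminus \pi)$, and since $U \setminus \pi$ is open, the closure of $X \setminus \pi$ meets $U$ only inside $U \setminus \pi$, which is disjoint from $x$.

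Next I would apply the excision theorem (\cite[p.119]{Ha}) to excise the set $A = X \setminus \pi$ from the pair $(X, X - x)$. The hypotheses of excision require $\overline{A} \subseteq \intr(X - x)$. Since $\overline{X \setminus \pi} \subseteq (X \setminus U) \cup (U \setminus \pi)$ and this set is disjoint from $x$ and is in fact open-in-$U$-plus-closed, one checks that its closure stays away from $x$, so $\overline{A} \subseteq X \setminus x = \intr(X - x)$ as $X - x$ is open. Excision then gives
\[
    H_\ast(X, X - x) \;=\; H_\ast\bigl(X \setminus A,\; (X - x) \setminus A\bigr) \;=\; H_\ast(\pi, \pi - x),
\]
since $X \setminus A = \pi$ and $(X - x) \setminus A = \pi \setminus x = \pi - x$. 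This is exactly the claimed identity.

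The main obstacle is the point flagged above: whether a ``local path component'' as used here is genuinely open. In a locally path connected space every path component of every open set is open, and that is the intended reading; but without local path connectedness one has to build the openness into the definition. I would handle this by remarking at the start of the proof that we only ever apply the lemma in situations (block boundaries, suspensions of Cantor sets, and the relevant $\partial X$) where the ambient spaces are locally path connected at the points in question, or equivalently that $\pi$ is taken to be a path component of an open set whose path components are open; with that understood, the excision argument above goes through verbatim. A secondary, purely bookkeeping point is verifying the closure condition $\overline{X \setminus \pi} \subseteq X - x$ rather than merely $X \setminus \pi \subseteq X - x$; this follows because $U \setminus \pi$ is open in $X$, so no sequence in $X \setminus \pi$ can converge to $x$ (such a sequence would eventually lie in the open neighborhood $\pi$ of $x$, a contradiction), and I would spell this out in one line.
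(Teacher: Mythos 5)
There is a genuine gap, and it is precisely the point you flagged and then tried to wave away. Excising $A = X\setminus\pi$ from the pair $(X, X-x)$ requires $\overline{A}\subseteq\intr(X-x)$, which forces $x\notin\overline{X\setminus\pi}$, i.e.\ $\pi$ must contain an open neighborhood of $x$. Your proposed repair --- that the lemma is only ever applied where the ambient space is locally path connected at the point, or that ``local path component'' secretly means a path component of an open set whose path components are open --- is exactly backwards for this paper. The spaces here are deliberately pathological: every irrational point of $\partial H$ is its own path component, $\partial H$ is nowhere locally path connected, and $\partial Y$ is locally path connected \emph{only} at its poles. The lemma is invoked precisely at non-pole points (via Theorem C$'$ in the proof of Lemma \ref{le:fewfakepoles}, and in the computation for $\zeta\in\partial Y - PY$, where $\pi\approx B\times V_\zeta$ with $V_\zeta$ a typically non-open path component of an open subset of $\partial H$). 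So in the intended applications $\pi$ is not a neighborhood of $x$, your excision hypothesis fails, and the argument does not go through; building openness into the definition would make the lemma useless for its purpose.

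The paper's proof avoids this entirely by splitting the argument into two steps, neither of which needs $\pi$ open. First it excises only $X\setminus U$, where $U$ is the \emph{open} neighborhood of which $\pi$ is a path component; this is the standard reduction $H_\ast(X,X-x)=H_\ast(U,U-x)$ and the closure condition is automatic since $U$ is open and contains $x$. Then it works at the chain level: because the image of a singular simplex is path connected, $C_\ast(U)$ decomposes as $C_\ast(U-\pi)\oplus C_\ast(\pi)$, and likewise $C_\ast(U-x)=C_\ast(U-\pi)\oplus C_\ast(\pi-x)$, so the quotient gives $C_\ast(U,U-x)=C_\ast(\pi,\pi-x)$ and hence $H_\ast(U,U-x)=H_\ast(\pi,\pi-x)$. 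That direct-sum decomposition of singular chains over path components is valid for an arbitrary path component, open or not, which is exactly what the topology of these boundaries demands. If you replace your second excision with this chain-level splitting, your first step (excision to $U$) is fine and the proof is complete.
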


\begin{proof}
Let $U$ be an open neighborhood of $x$ which has $\pi$ as a path component.
Using excision, we get
\[
	H_\ast(X,X-x)=H_\ast(U,U-x).
\]
Now, since the image of a singular simplex $\sigma$ is path connected,
the chain complex \(C_\ast(U)\) splits as \(C_\ast(U-\pi)\oplus C_\ast(\pi)\).
Passing to the relative chain complex \(C_\ast(U,U-x)\) kills off the entire first factor.
Thus \(C_\ast(U,U-x)=C_\ast(\pi,\pi-x)\) and
\(H_\ast(U,U-x)=H_\ast(\pi,\pi-x)\).
\end{proof}

\begin{lemma}
\label{le:homologyofaproductwithB}
Let $X$ be a path connected space, \(x_0\in X\), and $B$ be the open $n$-ball with \(z_0\in B\).
Then the local homology \(H_\ast(B\times X,B\times X-(z_0,x_0))\) is zero whenever \(\ast<n\).  When \(\ast=n\),
it is nonzero only when $X$ is a single point, in which case it is $\Z$.
\end{lemma}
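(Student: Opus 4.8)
The plan is to compare the complement of the single point $(z_0,x_0)$ with the complement of the whole slice $\{z_0\}\times X$, because the latter complement is an honest product and its relative homology is computable by Künneth. Identify $B$ with $\R^n$ so that $z_0$ becomes $0$, and write $U=\R^n\times X$, $Y=U\setminus\{(0,x_0)\}$ (the point complement), and $V=(\R^n\setminus\{0\})\times X$ (the slice complement), so that $V\subset Y\subset U$ and $H_\ast(B\times X,B\times X-(z_0,x_0))=H_\ast(U,Y)$. Everything will come out of the long exact sequence of the triple $(U,Y,V)$ once the two flanking relative groups are identified.

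First I would compute $H_\ast(U,V)$ and $H_\ast(Y,V)$. The pair $(\R^n,\R^n\setminus\{0\})$ is a good pair whose relative homology is free and equal to $\Z$ in degree $n$ and $0$ otherwise (from the long exact sequence of the pair and $\R^n\setminus\{0\}\simeq S^{n-1}$). Hence the relative Künneth theorem gives $H_\ast(U,V)\cong H_{\ast-n}(X)$, with no Tor contribution. For $(Y,V)$, note that $Y=V\cup\bigl(\R^n\times(X\setminus\{x_0\})\bigr)$ with intersection $(\R^n\setminus\{0\})\times(X\setminus\{x_0\})$, so excision identifies $H_\ast(Y,V)$ with $H_\ast\bigl(\R^n\times(X\setminus\{x_0\}),\,(\R^n\setminus\{0\})\times(X\setminus\{x_0\})\bigr)$, and the same Künneth argument yields $H_\ast(Y,V)\cong H_{\ast-n}(X\setminus\{x_0\})$. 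By naturality of these isomorphisms, the map $H_\ast(Y,V)\to H_\ast(U,V)$ appearing in the triple sequence is identified with the map $\iota_\ast\colon H_{\ast-n}(X\setminus\{x_0\})\to H_{\ast-n}(X)$ induced by the inclusion $\iota\colon X\setminus\{x_0\}\hookrightarrow X$.

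Feeding this in, the triple sequence reads $\cdots\to H_{\ast-n}(X\setminus\{x_0\})\xrightarrow{\iota_\ast}H_{\ast-n}(X)\to H_\ast(U,Y)\to H_{\ast-n-1}(X\setminus\{x_0\})\xrightarrow{\iota_\ast}H_{\ast-n-1}(X)\to\cdots$. For $\ast<n$ both neighbouring groups vanish, so $H_\ast(U,Y)=0$; for $\ast=n$ the group $H_{-1}(X\setminus\{x_0\})$ vanishes, so $H_n(U,Y)\cong\operatorname{coker}\bigl(\iota_\ast\colon H_0(X\setminus\{x_0\})\to H_0(X)\bigr)$. Since $X$ is path connected, $H_0(X)=\Z$ and the class of any point of $X\setminus\{x_0\}$ generates it; thus $\iota_\ast$ is onto and the cokernel is $0$, unless $X\setminus\{x_0\}=\emptyset$, i.e.\ $X=\{x_0\}$ is a single point, in which case the cokernel is $H_0(\{x_0\})=\Z$. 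This is exactly the assertion.

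The step needing the most care is the excision identifying $H_\ast(Y,V)$: it wants $\R^n\times(X\setminus\{x_0\})$ to be open in $Y$ (equivalently, that $\{x_0\}$ be closed in $X$), which is automatic for the metrizable spaces $X$ arising in our applications but should be noted in general. One should also check that the relative Künneth isomorphisms are natural enough to make the identification of the triple map with $\iota_\ast$ go through. If one would rather avoid relative Künneth altogether, the same point-versus-slice comparison can instead be run inductively on $n$, writing $\R^n=\R^{n-1}\times\R$ and applying Mayer--Vietoris to the cover of $Y$ by $\{t\neq 0\}$ and a slab neighbourhood of the deleted point, with the base case $n=1$ treated directly.
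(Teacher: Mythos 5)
Your proof is correct, but it takes a genuinely different route from the paper. The paper argues geometrically: it represents relative classes in degree $k\le n$ by maps of pseudo-$k$-manifolds into $B\times X$ and homotopes them off the point $(z_0,x_0)$ rel boundary, by smoothing the $B$-coordinate and making it transverse to $z_0$, pushing the $X$-coordinate off $x_0$ over a small $k$-ball in the domain, and then sliding the finite preimage of $z_0$ into that ball by a homeomorphism of the domain. Your argument is instead purely homological: you compare the point complement with the slice complement $(\,\R^n\setminus\{0\})\times X$ via the long exact sequence of the triple, identify the two flanking relative groups by excision and the relative K\"unneth formula (the Tor terms vanish since $H_\ast(\R^n,\R^n\setminus\{0\})$ is free), and reduce the whole statement to the cokernel of $H_0(X\setminus\{x_0\})\to H_0(X)$, which vanishes by path connectedness unless $X=\{x_0\}$. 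What your route buys is rigor and precision: it avoids the implicit appeals to representing homology classes by pseudo-manifolds and to transversality for such maps, it computes the groups exactly in every degree $\ast\le n$ rather than only showing classes can be pushed off the point, and the naturality needed to identify the connecting map with $\iota_\ast$ is standard for the cross product. The one hypothesis you import --- that $\{x_0\}$ be closed in $X$ so the excision is legitimate --- is correctly flagged and is harmless here, since the lemma is applied to local path components of metrizable boundaries; note the paper's own proof also implicitly uses this when it treats $\sigma_X^{-1}(x_0)$ as a compact (closed) subset of the domain. What the paper's approach buys is a geometric picture of why the local homology dies, at the cost of several steps that are only sketched.
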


\begin{proof}
If $X$ is just a single point, then \(B\times X=B\) and the lemma is a standard fact.  So, we will assume $X$ has more than
one point.  Fix \(k\le n\).
We will prove that any map $\sigma$ of a pseudo-$k$-manifold
$M$ into $B\times X$ such that \((z_0,x_0)\notin\sigma(\partial M)\) can be homotoped off of this point via a homotopy
$H_t$ such that \(H_t(\partial M)\) misses \((z_0,x_0)\) at every time $t$.
We will write the coordinates of $\sigma$ as \((\sigma_B,\sigma_X)\).\\

First of all, we smooth $\sigma_B$.  We do this with a $\delta$-homotopy where \(\delta<d(z_0,\sigma(\partial M))\)
(that is, a homotopy which doesn't move points more than a distance of $\delta$).
Then we homotope $\sigma_B$ rel \(\partial M\) so that it is transverse to $z_0$.
If \(k<n\), then \(\sigma_B^{-1}(z_0)\) is empty.  If $k=n$, then
\(\sigma_B^{-1}(z_0)\) is finite.\\

Next, we homotope $\sigma_X$ so that there is an open $k$-ball \(U\subset\intr M\) such that \(\sigma_X(U)\) misses $x_0$.
If \(\sigma_X^{-1}(x_0)\neq M\), we don't have to do anything, and since \(\sigma_X^{-1}(x_0)\) is compact, we
can get \(U\subset\intr M-\sigma^{-1}(x_0)\) without any moves at all.
Suppose \(\sigma_X(M)=x_0\); then get a
path \(\alpha:[0,1]\to X\) such that \(\alpha(0)=x_0\) and \(\alpha(1)\neq x_0\) and an open $k$-ball \(U\subset\intr M\).
Let
\[
	G_t=(G^1_t,G^2_t):\overline U\to\overline U\times[0,1]
\]
be a homotopy such that
\begin{align*}
	G_0 &= (\id|_{\overline U},0), \\
	G_t|_{\partial U}  &= (\id|_{\partial U},0) \textrm{ for all times }t,\\
\textrm{and }
	G_1(\overline U) &= \partial U\times[0,1]\cup U\times\{1\}.
\end{align*}
With this we define \(G'_t=(\sigma_B\circ G^1_t,\alpha\circ G^2_t)\); note that now the $X$-coordinate of
$G'_1(U)$ misses $x_0$.  We retake $\sigma$ to be the new map
\[
	\sigma|_{M-U}\cup G'_1
\]
which is homotopic to the old one rel $\partial M$.\\

Finally, let \(H_t:M\to M\) be a homotopy through homeomorphisms rel $\partial M$ from the identity to a
homeomorphism $h$ such that
\(h(\sigma_B^{-1}(z_0))\subset U\).  Then \((\sigma_BH^{-1}_t,\sigma_X)\) is a homotopy rel \(\partial M\)
from $\sigma$ to a new map
\begin{align*}
	\sigma'
&=
	(\sigma_B',\sigma_X') \\
&=
	(\sigma_Bh^{-1},\sigma_X)
\end{align*}
with the property that
\begin{align*}
	\sigma_X'\bigl((\sigma_B')^{-1}(z_0)\bigr)
&=
	\sigma_X\bigl(h\sigma_B^{-1}(z_0)\bigr) \\
&\subset
	\sigma_X\bigl(U\bigr),
\end{align*}
which misses \(x_0\).  So $\sigma'$ misses the point $(z_0,x_0)$.
\end{proof}

We denote \textit{reduced homology}, as defined in \cite[p.110]{Ha} by \(\widetilde H_\ast\).\\

\begin{lemma}
\label{le:homologyofsuspensions}
Let $X$ be a space and $\Sigma$ denote suspension.
Then \(\widetilde H_\ast(\Sigma X)=\widetilde H_{\ast-1}(X)\).
\end{lemma}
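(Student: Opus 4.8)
The plan is to use the Mayer–Vietoris sequence for the standard decomposition of the (unreduced) suspension $\Sigma X$ into its two cones. Write $\Sigma X = C_+ \cup C_-$, where $C_+$ and $C_-$ are the upper and lower (closed, slightly fattened) cones, so that $C_+ \cap C_-$ deformation retracts onto a copy of $X \times \{0\}$ sitting at the ``equator''. Each of $C_+$ and $C_-$ is contractible, hence has trivial reduced homology in all degrees.

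First I would record the Mayer–Vietoris sequence in reduced homology for the pair $(C_+, C_-)$ covering $\Sigma X$:
\[
	\cdots \to \widetilde H_\ast(C_+) \oplus \widetilde H_\ast(C_-) \to \widetilde H_\ast(\Sigma X) \to \widetilde H_{\ast-1}(C_+ \cap C_-) \to \widetilde H_{\ast-1}(C_+) \oplus \widetilde H_{\ast-1}(C_-) \to \cdots.
\]
Since $C_\pm$ are contractible, the outer terms $\widetilde H_\ast(C_+) \oplus \widetilde H_\ast(C_-)$ and $\widetilde H_{\ast-1}(C_+) \oplus \widetilde H_{\ast-1}(C_-)$ both vanish. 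Exactness then forces the connecting homomorphism $\widetilde H_\ast(\Sigma X) \to \widetilde H_{\ast-1}(C_+ \cap C_-)$ to be an isomorphism. Finally, since $C_+ \cap C_-$ deformation retracts onto $X$, we have $\widetilde H_{\ast-1}(C_+ \cap C_-) \cong \widetilde H_{\ast-1}(X)$, and composing these isomorphisms gives $\widetilde H_\ast(\Sigma X) \cong \widetilde H_{\ast-1}(X)$.

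The only genuine subtlety — and it is minor — is making sure the two cones form an admissible Mayer–Vietoris cover, i.e.\ that we are using open sets or a cofibrant pair. The clean fix is to take $C_+$ to be the image of $X \times (-\tfrac14, 1]$ and $C_-$ the image of $X \times [0, \tfrac54)$ in the quotient $\Sigma X = (X \times [0,1])/(X \times \{0\} \cup X \times \{1\})$; then $C_\pm$ are open, each is contractible onto its cone point, and $C_+ \cap C_-$ is the image of $X \times (-\tfrac14, \tfrac54)$, which deformation retracts onto $X \times \{\tfrac12\}$. With this choice every claim above is literal, and there is no real obstacle; the lemma is essentially a packaged instance of the suspension isomorphism and I would present it in just this form, citing \cite{Ha} for the Mayer–Vietoris sequence if desired.
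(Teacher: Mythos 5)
Your proof is correct and follows essentially the same route as the paper: a Mayer--Vietoris argument for the decomposition of $\Sigma X$ into two cones, using contractibility of the cones to make the connecting map an isomorphism. Your extra care in fattening the cones to an open cover is a minor technical improvement over the paper's version, which applies Mayer--Vietoris directly to the closed cones, but the underlying argument is identical.
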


\begin{proof}
If we write
\[
	\Sigma X=CX\cup CX
\]
where \(CX\cap CX=X\), then the Mayer-Vietoris sequence gives us for \(n\ge 1\),
\[
	...
		\to
	\widetilde H_n(CX)\oplus \widetilde H_n(CX)
		\to
	\widetilde H_n(\Sigma X)
		\to
	\widetilde H_{n-1}(X)
		\to
	\widetilde H_{n-1}(CX)\oplus \widetilde H_{n-1}(CX)
		\to
	...
\]
Since \(CX\) is contractible, the first and last term shown here disappear and we are left with the statement
of the lemma.
\end{proof}

\section{Hyperblocks}
\label{sec:hyperblocks}

For positive integers $m$ and $n$ and an infinite CAT(0) group $\Gamma$, define
\[
	G_0
=
	\bigl(\Gamma\times\Z^m\bigr)\ast_{\Z^m}\bigl(\Z^m\times\Z^n\bigr)
=
	\Z^m\times\bigl(\Gamma\ast\Z^n).
\]
and choose a geometric action of \(\Z^m\times\Z^n\) on \(E_0=\R^m\times\R^n\) (by translations).
We will denote the convex hull of the \(\Z^m\)-orbit of the origin
by \(E^m_-\) and the convex hull of the \(\Z^n\)-orbit of the origin by
\(E^n_+\) (these are just isometric copies of \(\R^m\) and \(\R^n\)).  The angle $\theta$ between
geodesics in the two subspaces is called the \textit{skew}, which can be
any number \(0<\theta\le\pi/2\).  The quotient of $E_0$ by the group action is an $m+n$ torus $T_0$
with $m$- and $n$- tori \(T^m_-=E^m_-/\Z^m\) and \(T^n_+=E^n_+/\Z^n\);
we call these the \textit{left-} and \textit{right-hand} subtori of $T_0$.\\

For a CAT(0) group $\Gamma$, let $\overline K=\overline K(\Gamma)$ be a compact nonpositively curved $K(\Gamma,1)$,
and denote its universal cover by $K$.
Choose a point \(\overline x\in\overline K\) and glue \(T^m_-\times\overline K\) to $T_0$
via the isometry \(T^m_-\times\{\overline x\}\to T^m_-\).  The resulting space is nonpositively
curved (\cite[Prop II.11.6(2)]{BH}) and is a \(K(G_0,1)\);
we denote it by \(\overline Y=\overline Y(\Gamma,m,n)\)
(see Figure \ref{fig:2tori}).
Its universal cover $Y$ is a CAT(0) $G_0$-space, which we call a \textit{hyperblock}.
Path components of \(p^{-1}(T_0)\) are called \textit{walls};
these are isometric copies of $E_0$.
The names \textit{hyperblock} and \textit{wall} are given because these spaces will play the role
in this paper that ``blocks'' and ``walls'' play in \cite{CK}.\\

\begin{figure}[h]
\includegraphics[height=1.5in,width=.8\textwidth]{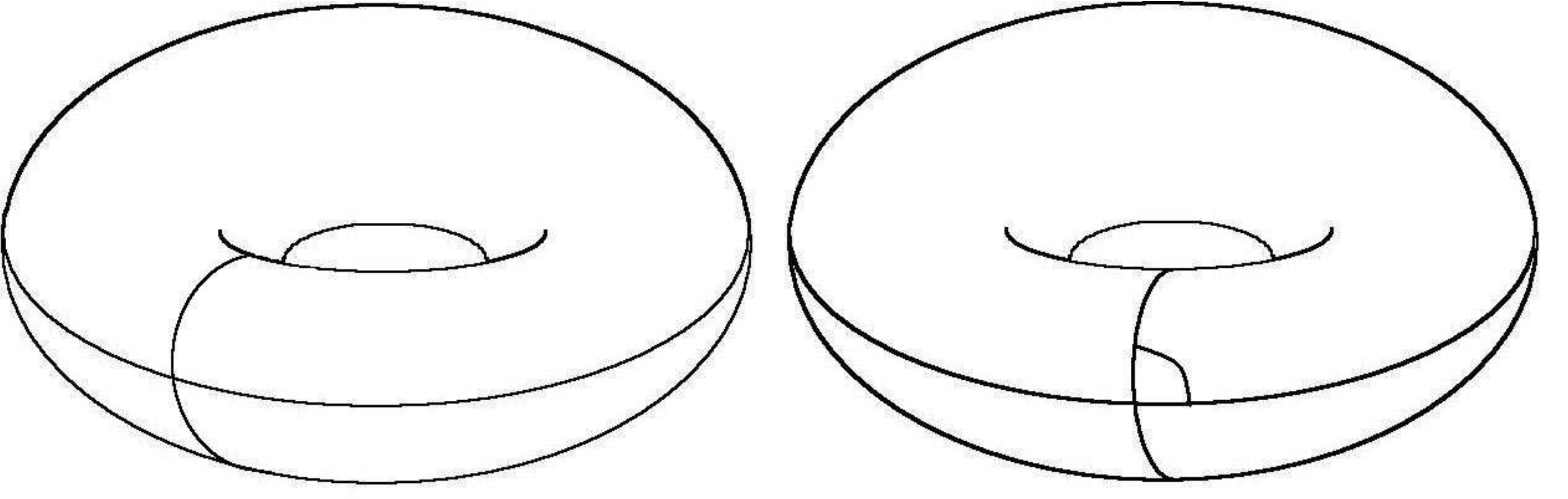}\\
\vspace{-1.2cm}\hspace{6.3cm}$\theta$\\
\vspace{-0.7cm}\hspace{-6.2cm}$T^m_-\times\{\overline x\}$\\
\vspace{-1.1cm}\hspace{-10.3cm}{\scriptsize $\overline K$}\\
\vspace{0cm}\hspace{4cm}$T^m_-$\\
\vspace{1cm}\(T^m_-\times\overline K\)
\hspace{5.5cm}\(T_0\)\\
\caption{$\overline Y$}
\label{fig:2tori}
\end{figure}

\subsection{The Hedge}
\begin{lemma}
\label{le:hedge}
\(Y\) splits as a product \(E^m_-\times H\) where \(H\) comes with a natural block structure in the sense
of Section \ref{sec:NervesandItineraries}.  In this block structure, left blocks are isometric copies of $K$
and right blocks are isometric copies of \(\R^n\).  The intersection of two blocks is at most a single point.
\end{lemma}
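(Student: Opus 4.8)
The plan is to build $Y$ explicitly from the description of $\overline Y$ and read off both the splitting and the block structure. First I would record the combinatorics. The decomposition $\overline Y=T_0\cup_{T^m_-}(T^m_-\times\overline K)$ exhibits $G_0$ as the amalgam $(\Z^m\times\Z^n)\ast_{\Z^m}(\Gamma\times\Z^m)$; call its Bass--Serre tree $\T$ (it really is a tree, being the tree of an amalgam over a single edge). Under $p\colon Y\to\overline Y$ the components of $p^{-1}(T_0)$ are the walls, each an isometric copy of $E_0\cong\R^{m+n}$, and the components of $p^{-1}(T^m_-\times\overline K)$, which I will call \emph{hedge pieces}, are isometric copies of $\widetilde{T^m_-\times\overline K}=E^m_-\times K$. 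A wall and a hedge piece meet exactly along a component of $p^{-1}(T^m_-)$, a copy of $E^m_-$, and by the standard dictionary between graphs of spaces and their Bass--Serre trees the incidence graph of walls and hedge pieces is $\T$.

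Next comes the splitting. Write each wall $W$ as the \emph{orthogonal} (hence metric) product $W=E^m_-\times(E^m_-)^{\perp}$. The copies of $E^m_-$ inside $W$ contributed by $p^{-1}(T^m_-)$ are the $\pi_1(T_0)$-translates of $E^m_-$, and since $\pi_1(T_0)\cap E^m_-$ is the $\Z^m$-lattice, such a translate $E^m_-+\lambda$ equals the horizontal slice $E^m_-+\pi_\perp(\lambda)$; these slices are indexed by the rank-$n$ lattice $L_W:=\pi_\perp(\Z^n\text{-lattice})\subset(E^m_-)^{\perp}$, which is of full rank because $E^n_+$ is complementary to $E^m_-$. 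Each hedge piece is already the metric product $E^m_-\times K$, and the copies of $E^m_-$ in it contributed by $p^{-1}(T^m_-)$ are the slices $E^m_-\times\{k\}$ with $k$ in the $\Gamma$-orbit of the basepoint of $K$. Crucially, every gluing identifies such a slice $E^m_-\times\{k\}$ of a hedge piece with such a slice $E^m_-+w$ of a wall by a map of the form $(\text{translation of }E^m_-)\times(\text{point})$: it is a lift of the identity attaching map $T^m_-\to T^m_-$, so it commutes with the $\Z^m$ deck actions on the two $E^m_-$'s and is therefore a translation on the $E^m_-$-coordinate. Hence the local ``$E^m_-$-coordinate'' on each piece can be transported across gluings, and because $\T$ is a tree there is no holonomy, so it globalizes to a free isometric action of $E^m_-$ on $Y$ (translating each piece in its $E^m_-$-factor) whose orbits are convex flats isometric to $E^m_-$. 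Its quotient $H$ is assembled from one copy of $(E^m_-)^{\perp}\cong\R^n$ per wall and one copy of $K$ per hedge piece, glued at points according to $\T$; a routine check that $d_Y^2$ splits as $d_{E^m_-}^2+d_H^2$ then gives $Y=E^m_-\times H$. Since $Y$ and $E^m_-$ are CAT(0), so is $H$, and its left blocks are the copies of $K$, its right blocks the copies of $\R^n$.

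Finally I would check Definition~\ref{defn:blockstructure} and the last sentence. The blocks cover $H$ and are closed and convex, the latter because $H$ is a tree of isometrically embedded CAT(0) pieces glued at points, so no geodesic of $H$ between two points of a block can leave it. Two blocks of the same parity are distinct components of a preimage, hence disjoint; a $K$-block and an $\R^n$-block meet iff their vertices of $\T$ are adjacent, and then in exactly one point because a tree has at most one edge between two vertices --- this gives condition (2) and the final assertion. For condition (1), a $K$-block carries $|\Gamma|=\infty$ marked points and an $\R^n$-block a rank-$n$ lattice of them, so each block meets at least two others. For condition (3), if two blocks are non-adjacent in $\T$ then any geodesic of $H$ joining them crosses an intermediate block, entering and leaving it at two distinct marked points (distinct since $\T$ is a tree), so their distance is at least the minimal gap between distinct marked points of that block; by cocompactness of the $G_0$-action on $Y$ there are only finitely many isometry types of block, so this gap is bounded below by some $2\epsilon>0$.

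The real work is the globalization in the second paragraph. The naive guess $Y=E^m_-\times E^n_+\times(\text{tree})$ fails: the skew angle $\theta$ makes $E^n_+$ and $(E^m_-)^{\perp}$ different, so one must split each wall along the orthogonal complement rather than along $E^n_+$, and then argue that it is precisely these orthogonal splittings that match up along the gluings --- the tree-ness of the incidence pattern $\T$ being exactly what removes the potential holonomy obstruction. Verifying that the upshot is a genuine \emph{metric} product, not just a topological one assembled from compatible pieces, also takes a little care. Alternatively one could obtain the splitting from a general product-decomposition theorem for CAT(0) spaces carrying a geometric action by a group with a direct $\Z^m$ factor, and use Bass--Serre theory only to pin down the complementary factor $H$.
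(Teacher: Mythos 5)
Your proposal is correct, and its geometric core is the same as the paper's: the decisive move in both is to split each wall as $E^m_-\times(E^m_-)^{\perp}$ (rather than the naive $E^m_-\times E^n_+$, which the skew $\theta$ rules out) and to observe that the gluings along the lifts of $T^m_-$ respect the $E^m_-$-coordinate, with tree-ness of the incidence pattern preventing any holonomy. Where you differ is in the packaging of the assembly step. The paper does not quotient by an $\R^m$-action; it fixes one left block $B_0$ with its isometry $\phi_0\colon E^m_-\times K\to B_0$ and then inductively extends, over larger and larger balls in the nerve, isometries $\phi_k\colon E^m_-\times H_k\to D_k$, choosing for each newly attached wall the isometry $E^m_-\times(E^m_-)^{\perp}\to B$ normalized so that it agrees with the previous stage on the common slice; since each stage is an honest isometry onto a convex union of blocks, the ``routine check that $d_Y^2$ splits'' which you flag as the delicate point is discharged automatically, and the tree structure is exactly what guarantees the induction never has to re-coordinatize a block. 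So your no-holonomy/coordinate-transport argument and the paper's induction are two phrasings of the same mechanism, and your fallback via a splitting theorem for a central $\Z^m$ is also consonant with the paper, which later invokes $Y=\Min\Z^m$ in the proof of Lemma \ref{le:qcomputation}. One thing you add that the paper leaves implicit is the explicit verification of conditions (1)--(3) of Definition \ref{defn:blockstructure} for $H$ (discreteness of the gluing points as a $\Gamma$- or lattice-orbit, the uniform gap giving the $\epsilon$ in (3), and adjacency in the tree giving single-point intersections); that is a worthwhile supplement, and your lattice $\pi_\perp(\Z^n)$ description of the gluing points in a right leaf is accurate. I see no genuine gap.
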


\begin{proof}
In the same way as in \cite{CK}, $Y$ comes with a natural block structure in which left blocks are path
components of \(p^{-1}(T^m_-\times\overline K)\) and right blocks are path components of \(p^{-1}(T_0)\).
We begin with a left block $B_0$ and an isometry
\[
	\phi_0:E^m_-\times K\to B_0
\]
which comes naturally from the splitting of the downstairs space.
Let \(\mathcal{B}_1\) denote the collection of right blocks
which intersect $B_0$.  For each \(B\in\mathcal{B}_1\), we have
\[
	B\cap B_0=\phi_0(E^m_-\times x_B)
\]
for some point \(x_B\in K\).
Now \((E^m_-)^\perp\) is the vector subspace of $E$ orthogonal to $E^m_-$.
There is also an isometry
\[
	\phi_1^B:E^m_-\times (E^m_-)^\perp\to B
\]
such that
\[
	B\cap B_0=\phi_1^B(E^m_-\times y_B)
\]
for some \(y_B\in (E^m_-)^\perp\).
We choose this isometry so that for every \(z\in E^m_-\),
\[
	\phi_1^B(z,y_B)=\phi_0(z,x_B).
\]
Define \(L_B=\phi_1^B((E^m_-)^\perp)\), \(y'_B=\phi_1^B(y_B)\),
\[
	D_1=B_0\cup\bigcup\mathcal{B}_1,
\]
and
\[
	H_1=K\cup\bigcup_{B\in\mathcal{B}_1}L_B/\sim
\]
where $\sim$ is generated by the rule \(x_B\sim y'_B\).
Then we can extend $\phi_0$ to an isometry
\[
	\phi_1:E^m_-\times H_1\to D_1
\]
by letting \(\phi_1(z,x)=\phi_1^B(z,x)\) whenever \(x\in L_B\).\\

Now, let \(\mathcal{B}_2\) be the collection of left blocks intersecting $D_1$.  As before, every
\(B\in\mathcal{B}_2\) intersects $D_1$ at a subspace of the form \(\phi_1(E^m_-\times x_B)\)
for some point \(x_B\in H_1\) and any natural isometry \(E^m_-\times K\to B\)
has the property that the intersection \(B\cap D_1\) is the image of \(E^m_-\times y_B\)
for some \(y_B\in K\).  Denote the image of \(0\times K\) under this map by \(L_B\) and the image of $(0,y_B)$ by
$y'_B$.  Let
\[
	D_2=D_1\cup\bigcup\mathcal{B}_2
\]
and
\[
	H_2=H_1\cup\bigcup_{B\in\mathcal{B}_2}L_B/\sim
\]
where $\sim$ is generated by the rule \(x_B\sim y'_B\) for \(B\in\mathcal{B}_2\).
In the same was above, $\phi_1$ can be extended to an isometry
\[
	\phi_2:E^m_-\times H_2\to D_2.
\]
Continue in this manner to get for every $n$ an isometry \(\phi_n:E^m_-\times H_n\to D_n\)
where $H_n$ has the desired block structure.
\end{proof}

Note that this splitting corresponds directly to the group splitting \(G_0=\Z^m\times[\Gamma\ast\Z^n]\) where \(\Z^m\)
acts only in the \(E^m_-\)-coordinate.  Then the projected action of \(\Gamma\ast\Z^n\) on $H$ is a geometric action
and the stabilizer of a block of $H$ is a subgroup conjugate to either $\Gamma$ or $\Z^n$.  A good example to keep in
mind at this point is \(G_0=\Z\times[\Z\ast\Z]\) (as in the Croke-Kleiner group).  In this case \(Y=\R\times H\) where
$H$ is the infinite 4-valent tree.\\

From the splitting \(Y=E^m_-\times H\), we get that \(\partial Y\) is the join
\(\partial E^m_-\ast\partial H\).  We call the points of \(\partial E^m_-\) \textit{poles} of
\(Y\).  The set of poles, which we denote $PY$, is an $(m-1)$-sphere.
We call \(H\) a \textit{hedge} and the blocks described in Lemma \ref{le:hedge} of the hedge \textit{leaves}.
Those leaves $L$ for which the space \(E^m_-\times L\) is a wall (a path component of
\(p^{-1}(T_0)\)) are called \textit{right leaves} (these are isometric copies of \(\R^n\)).
The other leaves are called \textit{left leaves} (these are isometric copies of $K$).
The points of intersction of leaves are called \textit{gluing points}.
Since the set of gluing points in a leaf is the orbit of a single one by a geometric action
(of either $\Z^n$ or $\Gamma$), this set is discrete and quasi-dense in the leaf.
With this block structure in hand, we can talk about itineraries of geodesics and geodesic rays
in the hedge in terms of leaves.  We will denote the itinerary of a geodesic $\alpha$ in $H$
by \(\Itin_H\alpha\).  The set of irrational points of \(\partial H\) will be denoted by \(IH\) and the
set of rational points by \(RH\).
Geodesics in the hedge are easy to compute:
If \(\Itin_H[x,y]=[L_1,...,L_n]\) and \(y_i=L_i\cap L_{i+1}\) for \(1\le i<n\), then
\[
	[x,y]=[x,y_1]\cup[y_1,y_2]\cup...\cup[y_{n-1},y]
\]
where each of these segments is taken in a leaf.\\

Given a CAT(0) space $X$ with points \(p,x,y\in X\), the \textit{Alexandrov angle between
the geodesics \([p,x]\) and \([p,y]\)} is defined to be the angle between the initial velocities
of these geodesics; this number is denoted by \(\angle_p(x,y)\).  If $\alpha$ and $\beta$ are
geodesic segments [rays] based at $p$, then we may denote the Alexandrov angle between them by
\(\angle_p(\alpha,\beta)\).  If $\alpha$ and $\beta$ are rays with endpoints
\(\zeta\) and \(\eta\in\partial X\), then
we may also write \(\angle_p(\zeta,\eta)=\angle_p(\alpha,\beta)\).
Given \(\zeta,\eta\in\partial X\), the
\textit{Tits} angle between $\zeta$ and $\eta$ is defined to be
\[
	\angle_{Tits}(\zeta,\eta)=\sup_{p\in X}\angle_p(\zeta,\eta)\textrm{ (see \cite[Defn 9.4]{BH})}.
\]
In \cite[Co 9.9]{BH}, it is shown that for geodesic rays $\alpha$ and $\beta$,
\[
	\angle_{Tits}(\alpha(\infty),\beta(\infty))=\angle_p(\alpha,\beta)
\]
iff $\alpha$ and $\beta$ bound a flat sector (or their union is a geodesic line, in which case this angle is \(\pi/2\)).
For closed subspaces $C$ and $D$ of \(\partial X\), we may also write
\[
	\angle_{Tits}(C,D)
=
	\min\bigl\{
		\angle_{Tits}(\zeta,\eta)
	\big|
		\zeta\in C,\eta\in D
	\bigr\}.
\]
\indexspace

In the context of this section, we will let
\[
	q:PY\times\partial H\times[0,\pi/2]\to\partial Y
\]
denote the natural quotient map where \(q(\eta,\zeta,0)=\eta\) and \(q(\eta,\zeta,\pi/2)=\zeta\).
That is, $\partial H$ disappears at level 0 and $PY$ disappears at level $\pi/2$, and $t$ is the Tits
angle between \(q(\eta,\zeta,0)\) and \(q(\eta,\zeta,t)\).\\

\begin{lemma}
\label{le:qcomputation}
Consider the induced actions of \(G_0\) on \(\partial Y\) and \(\Gamma\ast\Z^n\) on \(\partial H\).
For \(g\in G_0\), and \(q(\eta,\zeta,t)\in\partial Y\), we have
\[
	gq(\eta,\zeta,t)=q(\eta,g_2\zeta,t)
\]
where $g_2$ is the image of $g$ under the isomorphism \(G_0/\Z^m\to\Gamma\ast\Z^n\).
\end{lemma}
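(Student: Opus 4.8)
The plan is to exploit the product decomposition $Y = E^m_- \times H$ supplied by Lemma~\ref{le:hedge}, together with the fact (recorded in the remark following that lemma) that the $G_0$-action respects this splitting, and then to read off the induced action on $\partial Y$ from the product-ray description of geodesics in a metric product, using that $q$ is exactly the parametrization of the join $\partial Y = PY \ast \partial H$.

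First I would record how $G_0$ acts on $E^m_- \times H$. By the remark after Lemma~\ref{le:hedge}, the splitting $Y = E^m_- \times H$ corresponds to $G_0 = \Z^m \times (\Gamma \ast \Z^n)$, so each $g \in G_0$ acts by a product isometry which I will write as $(z,h) \mapsto (\mu_g z,\ \nu_g h)$ with $\mu_g \in \mathrm{Isom}(E^m_-)$ and $\nu_g \in \mathrm{Isom}(H)$. Since $\Z^m$ acts only in the $E^m_-$-coordinate, $\nu_g$ is trivial for $g \in \Z^m$; hence $\nu_g$ depends only on the image $g_2$ of $g$ in $G_0/\Z^m \cong \Gamma \ast \Z^n$ and is precisely the projected action of $g_2$ on $H$, so that $\partial \nu_g = g_2$ on $\partial H$. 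For the other factor I claim $\mu_g$ is a \emph{translation} of $E^m_- \cong \R^m$: the generators of $\Z^m$ act on $E^m_-$ by the defining translations, the elements of $\Z^n$ translate along $E^n_+$ and hence act on $E^m_-$ by the $E^m_-$-component of that translation, and $\Gamma$ acts trivially on the $E^m_-$-coordinate (it only moves the $\overline K$-factor in the construction of $\overline Y$); since the translations of $\R^m$ form a group, every $\mu_g$ is a translation. In particular $\partial \mu_g = \id$ on $PY = \partial E^m_-$.

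It then remains to check that a product isometry $(z,h)\mapsto(\mu_g z,\nu_g h)$ induces on $\partial Y = PY \ast \partial H$ the join of the boundary maps while fixing the join parameter, i.e. $g\, q(\eta,\zeta,t) = q\bigl(\partial\mu_g(\eta),\ \partial\nu_g(\zeta),\ t\bigr)$. For this I would use that, $Y$ being a metric product, a geodesic ray of $Y$ ending at $q(\eta,\zeta,t)$ has the form $s \mapsto \bigl(\alpha(s\cos t),\ \beta(s\sin t)\bigr)$ with $\alpha$ a unit-speed ray in $E^m_-$ ending at $\eta$, $\beta$ a unit-speed ray in $H$ ending at $\zeta$, and the weight angle $t$ recognized as the Tits angle from the pole $\eta$ (so that this is indeed $q(\eta,\zeta,t)$, with the extremes $t=0,\pi/2$ included). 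Applying $g$ carries this to $s\mapsto\bigl(\mu_g\alpha(s\cos t),\ \nu_g\beta(s\sin t)\bigr)$, again a product ray with the same weights, whose endpoint is therefore $q\bigl(\partial\mu_g(\eta),\partial\nu_g(\zeta),t\bigr)$; combined with the previous paragraph this equals $q(\eta,g_2\zeta,t)$, which is the claim.

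The main obstacle — indeed the only point that needs real care — is the bookkeeping behind the second paragraph: verifying that the $G_0$-action genuinely preserves the metric product $E^m_- \times H$ (so that each $g$ splits as $(\mu_g,\nu_g)$) and that the resulting isometry $\mu_g$ of $E^m_-$ is a pure translation rather than a general Euclidean isometry. Both follow by unwinding the inductive construction of the splitting in Lemma~\ref{le:hedge} and the description of $\overline Y$, but one should be explicit that the gluing isometries there can be chosen $G_0$-equivariantly and that $\Z^n$'s translation along the skewed subspace $E^n_+$ contributes only a translation, never a rotation, to the $E^m_-$-coordinate. Once that is in place, the boundary computation is just the standard join description of $\partial(A\times B)$ and is immediate.
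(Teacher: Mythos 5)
Your argument is correct and is essentially the paper's own proof: both rest on the splitting $Y=E^m_-\times H$, on the fact that each $g\in G_0$ acts coordinate-wise with a translation in the $E^m_-$-factor (so every pole is fixed and the $H$-coordinate action is that of $g_2$), and on flat product rays/quadrants to see that the join parameter $t$ is unchanged. The only difference concerns the step you flag as the main obstacle: instead of unwinding the inductive construction of Lemma~\ref{le:hedge} and checking equivariance generator-by-generator, the paper simply notes that $Y=\Min\Z^m$ and that $\Z^m$ is central in $G_0$, so the standard min-set/Flat Torus Theorem results in \cite[Ch.\ II.6--7]{BH} give at once that every $g\in G_0$ preserves the splitting and restricts to a translation of $E^m_-$, making the equivariant-gluing bookkeeping unnecessary.
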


\begin{proof}
Given a point \(p=(p_1,p_2)\in Y=E^m_-\times H\) and points \(\eta\in PY\) and \(\zeta\in\partial H\), let
$\alpha$ and $\beta$ be geodesic rays based at $p$ going out to $\eta$ and $\zeta$ respectively.
Then we can write \(\alpha(t)=(\alpha_1(t),p_2)\) and \(\beta(t)=(p_1,\beta_2(t))\) where
$\alpha_1$ is a ray in $E^m_-$ and $\beta_2$ is a ray in $H$.  Because \(Y=\Min\Z^m\), every $g\in G_0$
can be written coordinate-wise as \((g^E,g^H)\) where \(g^E\)
is a translation of $E^m_-$ and $g^H$ is an isometry of $H$.  Of course, since $g_1$ acts only in the $E^m_-$-coordinate
of $Y$, \(g^H=g_2^H\).  Using this, we compute
\begin{align*}
	d\bigl(g\alpha(t),\alpha(t)\bigr)
&=
	\sqrt
	{
		d_{E^m_-}\bigl(g^E\alpha_1(t),\alpha_1(t)\bigr)^2
			+
		d_{H}(g^Hp_2,p_2)^2
	} \\
&=
	\sqrt
	{
		d_{E^m_-}\bigl(g^Ep_1,p_1\bigr)^2
			+
		d_{H}\bigl(g^Hp_2,p_2\bigr)^2
	} \\
&=
	d(gp,p),
\end{align*}
which means that \(g\eta=\eta\), and
\begin{align*}
	d\bigl(g\beta(t),g_2\beta(t)\bigr)
&=
	\sqrt
	{
		d_{E^m_-}(g^Ep_1,p_1)^2
			+
		d_{H}\bigl(g^H\beta_2(t),g_2^H\beta_2(t)\bigr)^2
	} \\
&=
	d_{E^m_-}(g^Ep_1,p_1),
\end{align*}
which means that \(g\zeta=g_2\zeta\).
Since $g$ takes the flat quadrant
\(\alpha[0,\infty)\times\beta[0,\infty)\) 
to the flat quadrant
\(g\alpha[0,\infty)\times g\beta[0,\infty)\), 
the lemma follows.
\end{proof}

When \(g\in\Gamma\ast\Z^n\), we will simply write the equation in this lemma as
\[
	gq(\eta,\zeta,t)=q(\eta,g\zeta,t),
\]
since there is no confusion.\\

\begin{lemma}
$IH$ is dense in \(\partial H\).
\end{lemma}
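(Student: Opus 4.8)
The plan is to show that every point of $\partial H$ lies in the closure of $IH$. By Corollary~\ref{co:RXIX}(1), $\partial H=RH\sqcup IH$ and $RH$ is the union of the leaf boundaries, so it suffices to show that an arbitrary point $\zeta$ of a leaf boundary $\partial L$ is a limit of irrational points. Fix a basepoint $x_0\in L$ and let $\alpha\colon[0,\infty)\to H$ be the geodesic ray from $x_0$ to $\zeta$; since $L$ is closed and convex, $\alpha$ stays in $L$. I will work with the cone topology based at $x_0$, which is harmless since it is basepoint-independent. Let $D$ bound the quasi-density constants of the (discrete) sets of gluing points of leaves, and for each large $T$ pick a gluing point $y_T\in L$ with $d(\alpha(T),y_T)\le D$; writing $d_T:=d(x_0,y_T)$, one has $|d_T-T|\le D$.

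For each $T$ I would build a geodesic ray $\beta_T$ that coincides with $[x_0,y_T]$ up to $y_T$ and then ``leaf-hops'' to become irrational. From $y_T=:w_0$, choose a leaf $L^{(0)}\neq L$ through $w_0$ (possible since $w_0$ is a gluing point) and a gluing point $w_1\in L^{(0)}$ with $w_1\neq w_0$ (possible since $L^{(0)}$ is unbounded but its gluing points are $D$-quasi-dense), and take $[w_0,w_1]\subset L^{(0)}$; from $w_1$ choose $L^{(1)}\neq L^{(0)}$ through $w_1$ and $w_2\neq w_1$ in it, and so on, setting
\[
	\beta_T=[x_0,w_0]\cup[w_0,w_1]\cup[w_1,w_2]\cup\cdots,
\]
each segment taken in a single leaf. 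The key point is that $\beta_T$ is a \emph{local} geodesic: at a junction $w_i$ the incoming and outgoing segments lie in two distinct leaves, which meet only at $w_i$ by Lemma~\ref{le:hedge}; hence once points $a$ and $b$ of these two segments are close enough to $w_i$, any path from $a$ to $b$ that leaves the incoming leaf somewhere other than $w_i$ is longer than $d(a,w_i)+d(w_i,b)$ (it must travel at least as far as the next gluing point of that leaf), so $d(a,b)=d(a,w_i)+d(w_i,b)$ and $\beta_T$ is geodesic near $w_i$. Since local geodesics in CAT(0) spaces are geodesics, $\beta_T$ is a geodesic ray; it has infinite length because otherwise the distinct gluing points $w_0,w_1,w_2,\dots$ would accumulate, contradicting discreteness of the gluing points of $H$. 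Finally, $\beta_T$ enters each of $L,L^{(0)},L^{(1)},\dots$ (a positive-length geodesic segment cannot consist only of gluing points), and these are distinct by Lemma~\ref{le:itingeo}, so $\Itin_H\beta_T$ is infinite and $\beta_T(\infty)\in IH$.

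To finish I would check $\beta_T(\infty)\to\zeta$ in the cone topology. Fix $R>0$ and take $T$ with $d_T\ge T-D>R$. As $\alpha|_{[0,d_T]}$ and $[x_0,y_T]=\beta_T|_{[0,d_T]}$ are geodesics with the common initial point $x_0$, convexity of the metric gives
\begin{align*}
	d\bigl(\alpha(R),\beta_T(R)\bigr)
	&\le
	\frac{R}{d_T}\,d\bigl(\alpha(d_T),y_T\bigr) \\
	&\le
	\frac{R}{d_T}\bigl(|d_T-T|+D\bigr)
	\;\le\;
	\frac{2DR}{T-D},
\end{align*}
which tends to $0$ as $T\to\infty$; this says exactly that $\beta_T(\infty)\to\zeta$. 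Hence every neighbourhood of $\zeta$ meets $IH$, and since $\zeta$ was arbitrary, $IH$ is dense.

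The step I expect to be the crux is the construction of $\beta_T$ and the verification that it is an honest irrational geodesic ray: the local-geodesic argument at the gluing points (using that distinct leaves meet in a single point), the fact that it has infinite length, and that its itinerary is infinite. The reduction via Corollary~\ref{co:RXIX} and the convexity estimate at the end are routine.
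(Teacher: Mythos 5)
Your proof is correct and takes essentially the same route as the paper: approximate a rational boundary point by extending the geodesic to a nearby gluing point and then ``leaf-hopping'' through infinitely many new leaves (using that distinct leaves meet in a single point and that gluing points are discrete and quasi-dense) to obtain irrational rays converging to the given ray. The only differences are presentational --- you verify geodesicity of the hopped ray via a local-geodesic argument and convergence via an explicit convexity estimate, where the paper appeals to its earlier description of geodesics in the hedge and lets the common initial segments give convergence directly.
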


\begin{proof}
We begin by proving the following claim.\\

\begin{claim}
For distinct points \(x,y_1\in H\) such that $y_1$ is a gluing point, the geodesic \([x,y_1]\)
can be extended to an irrational ray.
\end{claim}

Let $L_1$ be the leaf which intersects the geodesic \([x,y_1]\) only at the point $y_1$.
Since the set of gluing points in $L_1$ is quasi-dense (and $\Gamma$ is infinite),
we can choose another gluing point \(y_2\in L_1-y_1\),
say \(y_2=L_1\cap L_2\).  Then \([x,y_2]=[x,y_1]\cup[y_1,y_2]\).
Get another gluing point \(y_3\in L_2-y_2\), say \(y_3=L_2\cap L_3\), so that
\[
	[x,y_3]=[x,y_1]\cup[y_1,y_2]\cup[y_2,y_3].
\]
Continue in this manner, always extending the geodesic into a new leaf to get an
irrational ray.  This proves the claim.\\

Now, let \(\alpha\) be any rational ray in $H$ based at a point \(x\), where $L$ is the
last leaf \(\alpha\) enters.
Get a sequence \(z_n\) of gluing points in $L$ converging to \(\alpha(\infty)\) in
\(L\cup\partial L\).  By the claim, we can extend every geodesic \([x,z_n]\) to an
irrational ray \(\alpha_n\).  Then \(\alpha_n\to\alpha\).
\end{proof}

It turns out that $RH$ is also dense in \(\partial H\).  In fact, something much stronger is
true:\\

\begin{lemma}
\label{le:density}
Let \(\mathcal{L}\) be either the collection of left leaves or right leaves
and \(\mathcal{S}=\{\zeta_L\}_{L\in\mathcal{L}}\) be a subset of \(\partial H\)
obtained by choosing a single point \(\zeta_L\) from every leaf \(L\in\mathcal{L}\).
Then \(\mathcal{S}\) is dense in \(\partial H\).
\end{lemma}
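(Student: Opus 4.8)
The plan is to establish the slightly stronger fact that every nonempty open subset of $\partial H$ contains $\partial L$ for some leaf $L\in\mathcal L$; since $\zeta_L\in\partial L$ this yields density of $\mathcal S$ at once. Fix a basepoint $x_0$ in the interior of a leaf $L_0\notin\mathcal L$ (both parity classes are nonempty by property (1) of Definition \ref{defn:blockstructure}), and use the cone-topology neighbourhood basis $V(R,\epsilon)=\{\eta\in\partial H:d(\beta_\eta(R),\beta_\zeta(R))<\epsilon\}$ at a point $\zeta\in\partial H$, where $\beta_\xi$ denotes the geodesic ray from $x_0$ to $\xi$. Since $t\mapsto d(\beta_\eta(t),\beta_\zeta(t))$ is convex and vanishes at $0$, membership in $V(R,\epsilon)$ forces the two rays to agree to within $\epsilon$ on all of $[0,R]$; in particular $\beta_\eta(R)=\beta_\zeta(R)$ implies $\eta\in V(R,\epsilon)$.

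Two lemmas carry the argument. \emph{(i) Distinct leaves have disjoint boundaries.} If $\xi\in\partial M\cap\partial L$ with $M\neq L$, pick rays toward $\xi$ lying in $M$ and in $L$; they are asymptotic, hence stay within bounded Hausdorff distance $D$. Writing $\Itin_H[M,L]=[M,N,\dots]$, Lemma \ref{le:itins}(2) says every geodesic from a point of $M$ to a point of $L$ passes through the single point $a=M\cap N\in M$; so the chosen ray in $M$ stays within $D$ of the fixed point $a$, which is impossible for a geodesic ray. \emph{(ii) Rays out to $\partial L$ route through the entry gluing point.} For a leaf $L\neq L_0$ let $y_L$ be the gluing point at which the geodesic in $\mathcal N$ from $v_{L_0}$ enters $v_L$. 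Then $\beta_\xi$ coincides with $[x_0,y_L]$ on $[0,d(x_0,y_L)]$ for every $\xi\in\partial L$: indeed $\xi$ bounds a ray lying in the single leaf $L$, so $\xi\in RH$, whence by Corollary \ref{co:RXIX} the ray $\beta_\xi$ is rational and eventually lies in one leaf, which is $L$ by (i); thus $\Itin_H\beta_\xi$ is the $\mathcal N$-geodesic from $v_{L_0}$ to $v_L$ (Lemma \ref{le:itingeo}), and the explicit formula for geodesics in a hedge makes $\beta_\xi$ pass through $y_L$ at time $d(x_0,y_L)$. Hence, if $d(x_0,y_L)\ge R$ and $d\bigl([x_0,y_L](R),\beta_\zeta(R)\bigr)<\epsilon$, then $\partial L\subseteq\{\eta:\beta_\eta(R)=[x_0,y_L](R)\}\subseteq V(R,\epsilon)$.

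It remains to produce, for given $\zeta$ and $V(R,\epsilon)$, a leaf $L\in\mathcal L$ with $y_L$ far from $x_0$ and $[x_0,y_L]$ closely shadowing $\beta_\zeta$. Gluing points are quasi-dense in every leaf with a constant $C$ that may be taken uniform (all left leaves, and all right leaves, are isometric, and in each the gluing set is a single orbit of a geometric action). Writing $\alpha=\beta_\zeta$, each $\alpha(s)$ lies within $C$ of a gluing point, and these gluing points converge to $\zeta$ in $H\cup\partial H$ as $s\to\infty$ (a bounded perturbation of a sequence converging to a boundary point converges to the same point), so we obtain gluing points $z_j\to\zeta$ with $d(x_0,z_j)\to\infty$. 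At a gluing point exactly two leaves meet, one of each parity (three mutually intersecting leaves would violate property (2) of Definition \ref{defn:blockstructure}). If the $\mathcal L$-parity leaf through $z_j$ is entered at $z_j$ in the sense of (ii), take it to be $L_j$ and set $z_j'=z_j$; otherwise the non-$\mathcal L$ leaf through $z_j$ is the one entered at $z_j$, and moving one gluing point further inside it (again within $C$) reaches an $\mathcal L$-parity leaf $L_j$ whose entry point $z_j'$ satisfies $d(z_j,z_j')\le C$. Either way $L_j\in\mathcal L$, $L_j\neq L_0$, and $z_j'\to\zeta$ with $d(x_0,z_j')\to\infty$. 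For $j$ large, $d(x_0,z_j')>R$ and $d\bigl([x_0,z_j'](R),\alpha(R)\bigr)<\epsilon$, so (ii) gives $\partial L_j\subseteq V(R,\epsilon)$ and $\zeta_{L_j}\in\mathcal S\cap V(R,\epsilon)$. As $V(R,\epsilon)$ ranges over a neighbourhood basis of the arbitrary point $\zeta$, this proves $\mathcal S$ dense.

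I expect lemma (ii) to be the main obstacle: one has to rule out a ray from $x_0$ to a point of $\partial L$ slipping past the entry gluing point $y_L$, and this is precisely where the rational/irrational dichotomy of Corollary \ref{co:RXIX} and the disjointness of leaf boundaries are needed. The remaining points — the parity bookkeeping at a gluing point and the uniformity of the quasi-density constant — are routine.
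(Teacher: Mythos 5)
Your argument is correct, and it reaches the lemma by a route that differs in structure from the paper's. The paper first reduces to irrational points (invoking the preceding lemma that \(IH\) is dense in \(\partial H\)) and then uses the alternating parity of the infinite itinerary \([L_1,L_2,\dots]\) of an irrational ray \(\alpha\): the rays from the basepoint to the chosen points \(\zeta_{L_{2i}}\) agree with \(\alpha\) along \([x,y_{2i-1}]\), hence converge to \(\alpha\). You instead work at an arbitrary \(\zeta\in\partial H\) and prove the stronger statement that every basic neighbourhood of \(\zeta\) contains an entire leaf boundary \(\partial L\) with \(L\in\mathcal{L}\), manufacturing the required leaves from quasi-dense gluing points along \(\beta_\zeta\) together with a one-step parity adjustment. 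Both proofs ultimately rest on the same geometric mechanism -- rays from the basepoint to points of \(\partial L\) must pass through the entry gluing point of \(L\), so they agree on long initial segments -- but the paper simply asserts this (``since each \(\alpha_i\) eventually stays in \(L_{2i}\)''), whereas your lemma (ii) proves it, via Corollary \ref{co:RXIX}, Lemma \ref{le:itins}(2), and the disjointness of leaf boundaries; your version is therefore self-contained (it does not need the density of \(IH\)) and yields the stronger ``whole leaf boundary in every neighbourhood'' conclusion, at the cost of being longer. Two small points to tighten: when the leaf entered at \(z_j\) has the wrong parity, a \emph{different} gluing point of that leaf within a uniform distance is what you need, and quasi-density alone only gives a (possibly identical) gluing point within \(C\) -- use that the leaf is unbounded to find one at distance between \(C\) and \(3C\), say; and the assertion that a rational ray ``eventually lies in one leaf'' is not literally part of Corollary \ref{co:RXIX} -- it needs the short observation that a ray with finite itinerary stays in the last leaf it enters, since leaving it would force entry into a new adjacent leaf. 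Neither gap is serious, and the paper itself is no more explicit at the corresponding step.
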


\begin{proof}
We show that every irrational point is a limit point of \(\mathcal{S}\).  Choose any
irrational ray \(\alpha\) based at a non-gluing point \(x\in L_1\) with infinite itinerary \([L_1,L_2,...]\).
For each $i$, denote the gluing point \(L_i\cap L_{i+1}\) by \(y_i\).  Then either all the even
leaves or all the odd leaves are from \(\mathcal{L}\); we'll assume it's the evens.
Let \(\alpha_i\) be the geodesic ray based at $x$ which goes out to \(\zeta_{L_{2i}}\).
Then since each $\alpha_i$ eventually stays in \(L_{2i}\), it agrees with \(\alpha\) along the segment
\([x,y_{2i-1}]\), and \(\alpha_i\to\alpha\).
\end{proof}

\begin{lemma}
\label{le:hedgebdry}
If two geodesic rays in a hedge have different itineraries, then they lie in different
path components of \(\partial H\).
\end{lemma}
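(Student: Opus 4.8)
The plan is to reduce the statement to a separation property of $\partial H$ coming from a single gluing point. Throughout I read ``different itineraries'' as ``the itinerary paths of $\alpha$ and $\beta$ in the nerve $\N_H$ of the leaf structure of $H$ do not eventually coincide'' — this is the only reading under which the statement can hold, and it is automatic when the two rays share a basepoint, since geodesics between a fixed pair of vertices of the tree $\N_H$ are unique.

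For each edge $e=\{v_L,v_{L'}\}$ of $\N_H$, let $y=L\cap L'$ be the corresponding gluing point; deleting the interior of $e$ from the tree $\N_H$ yields two subtrees $T_L\ni v_L$ and $T_{L'}\ni v_{L'}$, and I would set $D_L$ (resp. $D_{L'}$) to be the union of the leaves whose vertex lies in $T_L$ (resp. $T_{L'}$). The routine facts to check are: $D_L$ and $D_{L'}$ are convex, because the itinerary between any two of their leaves is a geodesic of $\N_H$ with both endpoints in the relevant subtree, hence stays in that subtree, and Lemma \ref{le:itins}(3) (applied to the block structure on $H$) makes the union of the leaves along such an itinerary convex; moreover $H=D_L\cup D_{L'}$ and $D_L\cap D_{L'}=\{y\}$, since by Lemma \ref{le:hedge} distinct leaves meet in at most one point and $L,L'$ are the only adjacent pair straddling $e$; and finally $D_L,D_{L'}$ are closed, because the leaf structure is locally finite (gluing points are discrete in each leaf, and only two leaves meet at any one). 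Thus $D_L$ and $D_{L'}$ are closed convex subspaces of the proper CAT(0) space $H$ with topological frontier $\{y\}$, so $\partial D_L$ and $\partial D_{L'}$ embed as closed subsets of $\partial H$.

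The decisive point is that $\partial D_L\cap\partial D_{L'}=\emptyset$. If some $\zeta$ lay in both, it would be the common endpoint of a ray $\sigma$ lying in $D_L$ and a ray $\tau$ lying in $D_{L'}$; such rays are asymptotic, so $d\bigl(\sigma(t),\im\tau\bigr)$ is bounded. But $D_{L'}$ is convex and meets $D_L\supset\im\sigma$ only at the cut point $y$, so the nearest-point projection of $\sigma(t)$ onto $D_{L'}$ is $y$; hence $d\bigl(\sigma(t),y\bigr)=d\bigl(\sigma(t),D_{L'}\bigr)$ is bounded, which is impossible for a geodesic ray $\sigma$. Therefore $\partial H=\partial D_L\sqcup\partial D_{L'}$ is a partition of $\partial H$ into two disjoint closed — hence clopen — sets, so every path component of $\partial H$ is contained in $\partial D_L$ or in $\partial D_{L'}$. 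This disjointness, which plays off ``asymptotic rays stay boundedly close'' against ``$y$ is a cut point'', is the step I expect to need the most care; the rest is bookkeeping with the tree $\N_H$.

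To conclude, recall that the itinerary paths of $\alpha$ and $\beta$ in $\N_H$ are geodesics (Lemma \ref{le:itingeo}, applied to $H$). Since by hypothesis they do not eventually coincide, there is an edge $e=\{v_L,v_{L'}\}$ of $\N_H$ across which their ends lie on opposite sides; equivalently, $\alpha$ is eventually contained in $D_L$ and $\beta$ eventually in $D_{L'}$. Passing to a tail of each ray then shows $\alpha(\infty)\in\partial D_L$ and $\beta(\infty)\in\partial D_{L'}$, and these two points lie in complementary clopen subsets of $\partial H$, hence in different path components.
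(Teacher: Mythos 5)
Your argument is correct, but it follows a genuinely different route from the paper's. The paper fixes a basepoint $p$ and a leaf $L$ not containing $p$, and proves directly, in the cone topology, that the set $\Omega(\infty)$ of endpoints of rays whose itineraries contain $L$ is clopen: openness because such rays are exactly those passing through the punctured small ball $B_\epsilon(y)\cap L-y$ about the entry gluing point $y$, and closedness because a limit of rays crossing the compact set $\partial B_\epsilon(y)\cap L$ still crosses it. You instead cut the hedge along a single gluing point: removing an edge $e$ of the nerve splits $H$ into two closed convex ``half-hedges'' $D_L$ and $D_{L'}$ meeting only at $y$, and you get the clopen decomposition $\partial H=\partial D_L\sqcup\partial D_{L'}$ from general CAT(0) facts --- boundaries of closed convex subsets are closed, asymptotic rays stay boundedly close, and the cut point forces $d(\sigma(t),D_{L'})=d(\sigma(t),y)\to\infty$ for a ray $\sigma\subset D_L$, which is the key disjointness step and is argued correctly. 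What your approach buys is that it runs on soft convexity/projection arguments rather than an explicit ball-and-sphere analysis near $y$, and it makes explicit the reading of ``different itineraries'' (itineraries that do not eventually coincide) under which the statement is actually true; what the paper's version buys is a slightly sharper statement (``rays entering a given leaf'' is clopen), which feeds directly into the corollary that follows. Two small points you leave implicit: the covering $\partial H=\partial D_L\cup\partial D_{L'}$ needs the (easy) remark that a geodesic ray passes through the cut point $y$ at most once, hence has a tail in one of the two halves; and closedness of $D_L$ in $H$ is most cleanly seen using the $\epsilon$ of Definition \ref{defn:blockstructure} together with the fact that the gluing points form a closed discrete subset of each leaf. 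Both are routine, as you say, so the proof stands.
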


\begin{proof}
It suffices to prove the following claim.\\

\begin{claim}
Choose a basepoint $p\in H$ (not a gluing point), and let $L$ be a leaf not
containing $p$.  Let \(\Omega\) be the collection of geodesic rays that have $L$ in their itineraries,
and \(\Omega(\infty)\subset\partial H\) be the set of endpoints of rays from \(\Omega\).  Then \(\Omega(\infty)\)
is both open and closed.
\end{claim}

Let $y$ be the gluing point at which every ray in $\Omega$ enters $L$, and get \(\epsilon>0\)
so that the open $\epsilon$-ball $B_\epsilon(y)$ in $L$ based at
$y$ contains no other gluing points.
Then \(\Omega(\infty)\) is open because $\Omega$ is the collection of rays passing through the open space
\(B_\epsilon(y)\cap L-y\).  It is closed, since whenever \((\alpha_n)\) is a sequence of rays in \(\Omega\)
converging to a ray \(\alpha\), the sequence of points where \(\alpha_n\) intersects the closed
space \(\partial B_\epsilon(y)\cap L\) converges to a point on \(\partial B_\epsilon(y)\cap L\),
showing that $\alpha$ also enters $L$.
\end{proof}

\begin{corollary}
\hspace{0cm}\\
\begin{enumerate}
\item Every irrational point is a path component of \(\partial H\).\\
\item \(\partial H\) is nowhere locally path connected.\\
\item \(\partial Y\) is locally path connected precisely at its poles.\\
\end{enumerate}
\end{corollary}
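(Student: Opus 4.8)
For (1), the plan is to fix an irrational point $\zeta\in IH$, assume $\zeta'$ lies in the same path component of $\partial H$, and show $\zeta'=\zeta$. Choose a basepoint $p\in H$ that is not a gluing point and geodesic rays $\alpha$, $\beta$ from $p$ to $\zeta$, $\zeta'$. Since the endpoints lie in a common path component, the contrapositive of Lemma~\ref{le:hedgebdry} forces $\Itin_H\alpha=\Itin_H\beta$, and this common itinerary $[L_1,L_2,\dots]$ is infinite because $\zeta$ is irrational. Writing $y_k=L_k\cap L_{k+1}$ (a single point, by Lemma~\ref{le:hedge}): for each $k$, since $\alpha$ enters $L_{k+1}$, a long enough initial segment of $\alpha$ has an itinerary reaching $L_{k+1}$, so by the description of hedge geodesics it passes through $y_1,\dots,y_k$; the same holds for $\beta$. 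A compact geodesic segment has a finite itinerary, so the parameter values $d(p,y_k)$ are unbounded, and therefore $\alpha$ and $\beta$ are each recovered from $p$ and the $y_k$ by concatenating the unique geodesic segments $[y_{k-1},y_k]$. Hence $\alpha=\beta$ and $\zeta'=\zeta$, so $\{\zeta\}$ is a path component.

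For (2), I would first record that $IH$ and $RH$ are disjoint and both dense in $\partial H$ (the former by the density lemma above, the latter by choosing a selector as in Lemma~\ref{le:density}); consequently every nonempty open subset of $\partial H$ meets $IH$, and $\partial H$ has no isolated points. Now suppose $U$ is a path-connected neighborhood of some $\xi\in\partial H$. Then $U$ contains an irrational point $\eta\neq\xi$, and path-connectedness of $U$ puts $\eta$ in the same path component of $\partial H$ as $\xi$; but by (1) the path component of $\eta$ is $\{\eta\}$, so $\xi=\eta$, a contradiction. Hence $\partial H$ is nowhere locally path connected.

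For (3), I would work with the quotient map $q\colon PY\times\partial H\times[0,\pi/2]\to\partial Y$, which realizes $\partial Y$ as the topological join $PY\ast\partial H$, and extract two local models from the identifications defining $q$: (a) a pole $\eta_0\in PY$ has a neighborhood in $\partial Y$ homeomorphic to $N\times(\text{open cone on }\partial H)$, with $N$ a neighborhood of $\eta_0$ in $PY$ and $\eta_0$ corresponding to $(\eta_0,\text{cone point})$; (b) a non-pole point has a neighborhood homeomorphic either to a box $N\times W\times(a,b)$, when it is an interior join point (here $W$ is a neighborhood in $\partial H$ of the corresponding $\zeta_0$), or to $W\times(\text{open cone on }PY)$, when it lies in the copy of $\partial H$. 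Since $PY$ is a sphere (hence locally path connected everywhere), since the open cone on any space is locally path connected at its cone point, and since a product is locally path connected at a point exactly when each factor is, model (a) shows $\partial Y$ is locally path connected at every pole, while model (b) reduces local path connectedness of $\partial Y$ at any non-pole to that of $\partial H$ at the relevant $\zeta_0$, which fails by (2).

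The main obstacle is verifying the local models in (3) from the bare definition of $q$: the essential points are that $q$ is injective on $PY\times\partial H\times(0,\pi/2)$ and that its only nontrivial identifications occur at the two ends, so that the preimages of the candidate neighborhoods are open products and the restriction of $q$ to each is a quotient map onto an open subset of $\partial Y$ (using that $\partial H$ is compact, being a closed subspace of the compact space $\partial Y$). Everything else is routine bookkeeping; the one point to watch in (1) is the unboundedness of the parameters $d(p,y_k)$, which is exactly why compact segments have finite itineraries is invoked.
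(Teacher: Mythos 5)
Your argument is correct and follows exactly the route the paper intends: the paper states this corollary without proof, immediately after Lemma \ref{le:hedgebdry}, and your (1) is the natural consequence of that lemma together with the fact that an infinite itinerary pins down the ray, your (2) uses the density of $IH$ (and of $RH$ via Lemma \ref{le:density}) just as the surrounding text sets up, and your (3) uses the same local models $q\bigl(V\times\partial H\times[0,\pi/2)\bigr)\approx C^o\partial H\times B$ and $q\bigl(PY\times V\times(0,\pi/2]\bigr)\approx B\times V$ that the paper itself invokes in Section \ref{sec:localhomology}. No gaps worth flagging; the details you supply (unboundedness of $d(p,y_k)$, compactness of $\partial H$ for the cone-point neighborhood basis) are the right ones.
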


For \(\zeta\in\partial H\), we define the \textit{longitude} of \(Y\) at \(\zeta\) to be the subspace
\[
	l(\zeta):= PY\ast\zeta
\]
of $\partial Y$.
Since \(PY\) is an \((m-1)\)-sphere, $l(\zeta)$ is an \(m\)-ball.
\(\Gamma\ast\Z^n\) permutes right leaves transitively and for every right leaf $L$ of $H$, the wall \(W_L=E^m_-\times L\) is
stabilized by the conjugate subgroup
\[
	G_L=[\Z^m\times\Z^n]_{g_L}=g_L[\Z^m\times\Z^n]g_L^{-1}
\]
for some element $g_L\in\Gamma\ast\Z^n$.
The limit set of \([\Z^n]_{g_L}\) (that is, the set of limit points
in \(\partial W_L\) of the orbit of a single point in $W_L$)
will be denoted by \(S_L\); this is an $(n-1)$-sphere.  Note that since the action of
\(G_L\) on \(W_L\) comes from the original action of \(\Z^n\times\Z^m\)
on $E$, we have
\[
	\angle_{Tits}(PY,S_L)=\theta,
\]
where $\theta$ is the skew.
Consider the subset
\[
	\Pi Y=\bigcup_{\textrm{right leaves }L}S_L.
\]
Later we will see that \(\Pi Y\) is the collection of poles of hyperblocks which intersect $Y$ at a wall.
The following rule is easy to verify:
\[
	\lim[\Z^n]_{g_L}=g_L\lim\Z^n.
\]
Using this and the fact that \(\Gamma\ast\Z^n\) permutes right leaves transitively,
we get\\

\begin{fact}
\label{fact:spherepermutation}
\(\Gamma\ast\Z^n\) permutes the spheres of $\Pi Y$ transitively.
\end{fact}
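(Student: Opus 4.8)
The plan is to observe that the Fact is essentially a bookkeeping consequence of the displayed rule $\lim[\Z^n]_{g_L}=g_L\lim\Z^n$ together with the transitivity of $\Gamma\ast\Z^n$ on right leaves, which we are allowed to assume. Recall that $\Gamma\ast\Z^n\le G_0$, so it acts on $\partial Y$ and carries each wall boundary $\partial W_L$ onto another such; in particular it does permute the spheres $S_L$, and the only content is transitivity.

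First I would fix the right leaf $L_0$ whose stabiliser in $\Gamma\ast\Z^n$ is the standard $\Z^n$ factor, and write $S_0=\lim\Z^n$; thus $S_0=S_{L_0}$ is one of the spheres of $\Pi Y$, with $g_{L_0}$ taken to be the identity. For an arbitrary right leaf $L$, transitivity on right leaves supplies an element $g_L\in\Gamma\ast\Z^n$ with $g_L W_{L_0}=W_L$, which is exactly the $g_L$ appearing in $G_L=g_L[\Z^m\times\Z^n]g_L^{-1}$; then $[\Z^n]_{g_L}=g_L\Z^n g_L^{-1}$ and the displayed rule gives
\[
    S_L \;=\; \lim[\Z^n]_{g_L} \;=\; g_L\lim\Z^n \;=\; g_L\,S_0 .
\]
Conversely, for any $h\in\Gamma\ast\Z^n$ one may take $g_{hL_0}=h$, so $hS_0=S_{hL_0}$ is again a sphere of $\Pi Y$. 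Hence the family $\{S_L\}_L$ of spheres of $\Pi Y$ is precisely the single $\Gamma\ast\Z^n$-orbit $(\Gamma\ast\Z^n)\cdot S_0$ inside $\partial Y$, and a family that is a single group orbit is permuted transitively; concretely, $g_{L'}g_L^{-1}$ carries $S_L$ to $S_{L'}$.

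I do not expect a genuine obstacle here; the only subtlety is that $S_L$ as written depends a priori on the choice of $g_L$, so I would route the whole argument through the fixed model sphere $S_0$ and the identity $\{S_L\}_L=(\Gamma\ast\Z^n)\cdot S_0$, rather than trying to verify $g\,S_L=S_{gL}$ for a fixed labelling leaf by leaf. Once $S_L=g_L S_0$ is in place, transitivity is pure group arithmetic.
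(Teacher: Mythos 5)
Your argument is correct and is essentially the paper's own: the paper derives the Fact directly from the rule \(\lim[\Z^n]_{g_L}=g_L\lim\Z^n\) together with the transitivity of \(\Gamma\ast\Z^n\) on right leaves, which is exactly your identification \(S_L=g_L S_{L_0}\) followed by the observation that a single group orbit is permuted transitively. Your extra care in routing everything through the fixed model sphere \(S_{L_0}\) is a harmless elaboration of the same proof.
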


The topological object which we will use to distinguish between boundaries is called the
\textit{watermark} \(\mathcal{W}(Y)\).
It is defined as follows: let \(L_0\) be the right leaf which is stabilized
by \(\Z^n\).  Then \(\mathcal{W}(Y)\) is the image of $S_{L_0}$ under the map
\(\partial W_{L_0}\to l(\zeta_0)\) which is given by the rule
\[
	q(\eta,\zeta,t)\mapsto q(\eta,\zeta_0,t)
\]
for some (any) fixed \(\zeta_0\in\partial H\).  The following lemma guarantees that this
is well-defined.\\

\begin{lemma}
For every $\zeta_0\in\partial H$, \(l(\zeta_0)\cap\overline{\Pi Y}\approx\mathcal{W}(Y)\).
\end{lemma}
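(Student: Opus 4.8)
The plan is to unwind the definitions and show that the map $\partial W_{L_0}\to l(\zeta_0)$ given by $q(\eta,\zeta,t)\mapsto q(\eta,\zeta_0,t)$ restricts to a homeomorphism from $l(\zeta_0)\cap\overline{\Pi Y}$ onto $\mathcal{W}(Y)$; once that is done, well-definedness of $\mathcal{W}(Y)$ (independence of the choice of $\zeta_0$) follows by the same argument applied twice. First I would identify $l(\zeta_0)\cap\overline{\Pi Y}$ explicitly. A point of $\overline{\Pi Y}$ that lies on the $m$-ball $l(\zeta_0)=PY\ast\zeta_0$ is of the form $q(\eta,\zeta_0,t)$. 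I would argue that among the spheres $S_L$ comprising $\Pi Y$, the only one whose closure meets $l(\zeta_0)$ in a point other than a pole is $S_{L_0}$ itself: for any other right leaf $L$, the wall $W_L$ either shares no point of $\partial H$ with $\zeta_0$'s path component, so $S_L$ and $l(\zeta_0)$ can only meet in $PY$ (using Lemma~\ref{le:hedgebdry} and Corollary~\ref{co:RXIX}(1) to see that $S_L\subset\partial W_L$ has $H$-coordinates confined to the rational points coming from $L$), and $PY\subset\overline{S_L}$ already. Then $l(\zeta_0)\cap\overline{\Pi Y}=\bigl(l(\zeta_0)\cap\overline{S_{L_0}}\bigr)\cup PY$, and since $PY=\overline{S_{L_0}}\setminus S_{L_0}$ is itself the ``pole'' part, this equals $l(\zeta_0)\cap\overline{S_{L_0}}$, which is exactly the image of $S_{L_0}$ under $q(\eta,\zeta,t)\mapsto q(\eta,\zeta_0,t)$ together with $PY$.

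Next I would check the map is a continuous bijection onto its image when restricted to the relevant set. Continuity is clear because it is induced by the continuous quotient $q$ and the obvious continuous map $PY\times\partial H\times[0,\pi/2]\to PY\times\partial H\times[0,\pi/2]$ that collapses the middle coordinate to $\zeta_0$; one then descends through $q$. Injectivity on $\overline{S_{L_0}}$: a point $q(\eta,\zeta,t)\in\overline{S_{L_0}}$ has its $H$-coordinate $\zeta$ determined (for $0<t\le\pi/2$) by which path component of $\partial H$ it lies in, and $S_{L_0}\subset\partial W_{L_0}$ forces $\zeta$ to lie in the single path component corresponding to $L_0$; but $q$ already collapses the $\partial H$-coordinate at $t=0$, so $q(\eta,\zeta,t)=q(\eta',\zeta',t')$ with both points in $\overline{S_{L_0}}$ forces $(\eta,t)=(\eta',t')$ — and then $\zeta,\zeta'$ lie in the same component, so the images under $\zeta\mapsto\zeta_0$ agree iff the originals did (the map is constant in that coordinate on $\overline{S_{L_0}}$ modulo the pole set where it is automatically injective in $\eta$). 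Finally, $l(\zeta_0)$ is a compact metrizable $m$-ball, so $\overline{S_{L_0}}$ is compact; a continuous bijection from a compact space to a Hausdorff space is a homeomorphism, giving $l(\zeta_0)\cap\overline{\Pi Y}\approx\mathcal{W}(Y)$.

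The main obstacle I expect is the first step: pinning down precisely that no other sphere $S_L$ contributes to $l(\zeta_0)\cap\overline{\Pi Y}$ beyond the poles. This requires understanding how the closures $\overline{S_L}$ sit inside $\partial Y=PY\ast\partial H$ — specifically that $\overline{S_L}\cap l(\zeta)$, for $\zeta$ in a component of $\partial H$ different from $L$'s component, is contained in $PY$. The key input is that each $S_L$ lives in $\partial W_L$ with $W_L=E^m_-\times L$, so its points are of the form $q(\eta,\xi,t)$ with $\xi\in\partial L\subset\partial H$ lying in one fixed path component, and taking closures only adds $PY$ (the $t\to 0$ limit); then Lemma~\ref{le:hedgebdry} separates components, and $l(\zeta_0)$ meets such a set only where the $\partial H$-coordinate degenerates, i.e.\ on $PY$. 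I would also need the observation that $\Gamma\ast\Z^n$ acts transitively on right leaves (Fact~\ref{fact:spherepermutation}) is \emph{not} needed here but rather the cited fact $\angle_{Tits}(PY,S_L)=\theta$ is what guarantees $\overline{S_L}\cap PY=\emptyset$-avoiding issues and that the suspension-arc picture is uniform, though for this particular lemma the core is just the component-separation argument above. Once that geometric bookkeeping is in place, the rest is the standard compact-to-Hausdorff argument.
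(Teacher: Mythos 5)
Your proposal has a genuine gap at its core step: you compute $\bigcup_L\overline{S_L}$ where the lemma is about $\overline{\bigcup_L S_L}=\overline{\Pi Y}$, and these are very different sets here. Each $S_L$ is the limit set of a conjugate of $\Z^n$, hence a compact $(n-1)$-sphere; it is already closed, and since $\angle_{Tits}(PY,S_L)=\theta>0$ its points all have join parameter $t\ge\theta$, so $S_L\cap PY=\emptyset$. Thus your claims that $PY\subset\overline{S_L}$ and that $PY=\overline{S_{L_0}}\setminus S_{L_0}$ are false (in fact the argument in the paper shows $\overline{\Pi Y}$ misses $PY$ entirely). Worse, your identification $l(\zeta_0)\cap\overline{\Pi Y}=l(\zeta_0)\cap\overline{S_{L_0}}$ cannot be right: the points of $S_{L_0}$ with $t>0$ have $\partial H$-coordinate in $\partial L_0$, so for any $\zeta_0\notin\partial L_0$ (e.g.\ any irrational $\zeta_0$) the set $S_{L_0}$ meets $l(\zeta_0)$ nowhere at all, yet the lemma asserts that $l(\zeta_0)\cap\overline{\Pi Y}$ is a copy of the watermark for \emph{every} $\zeta_0\in\partial H$. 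The whole content of the statement is that the infinitely many spheres $S_L$ \emph{accumulate} onto each longitude, contributing limit points that lie in no individual $\overline{S_L}$; your component-separation argument via Lemma \ref{le:hedgebdry} only controls each sphere separately and so misses exactly this phenomenon.

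The two ingredients you explicitly set aside as unnecessary are precisely what the paper uses. For the inclusion $\mathcal{W}(Y)\subset l(\zeta_0)\cap\overline{\Pi Y}$: given $\nu=q(\eta,\zeta',\lambda)\in S_{L_0}$, density of the $\Gamma\ast\Z^n$-orbit of $\zeta'$ in $\partial H$ (Lemma \ref{le:density}) gives $g_n$ with $g_n\zeta'\to\zeta_0$, and by Lemma \ref{le:qcomputation} the points $g_n\nu=q(\eta,g_n\zeta',\lambda)$ lie in $\Pi Y$ (Fact \ref{fact:spherepermutation}) and converge to $q(\eta,\zeta_0,\lambda)$. For the reverse inclusion: a sequence $\nu_n\in S_{L_n}$ converging to $\nu=q(\eta,\zeta_0,\lambda)\in l(\zeta_0)$ is translated by the transitive action into the single compact sphere $S_{L_0}$, and after passing to a subsequence one checks (again via Lemma \ref{le:qcomputation}, splitting the cases $\lambda<\pi/2$ and $\lambda=\pi/2$) that the limit is $q(\eta,\zeta',\lambda)\in S_{L_0}$ for some $\zeta'$, so $\nu$ lies in the image of $S_{L_0}$ under the collapsing map. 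Your final compactness/Hausdorff step is not where the difficulty lies; without the equivariance-and-density argument the set you would be comparing to $\mathcal{W}(Y)$ is simply the wrong set.
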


\begin{proof}
We begin with the forward inclusion;
assume we have been given \(\nu\in l(\zeta_0)\cap\overline{\Pi Y}\), say
\[
	\nu=q(\eta,\zeta_0,\lambda).
\]
Then there is a sequence \((\nu_n)\subset\Pi Y\) converging to \(\nu\),
say \(\nu_n=q(\eta_n,\zeta_n,\lambda_n)\).  For each $n$, let \(S_{L_n}\) be the sphere
of \(\Pi Y\) containing $\nu_n$.  By Lemma \ref{fact:spherepermutation},
we can get a sequence \((g_n)\subset\Gamma\ast\Z^n\) for which \(g_n\nu_n\in S_{L_0}\) for all $n$.
by passing to a subsequence, we may assume that \(g_n\nu_n\to\nu'\in S_{L_0}\).
Write \(\nu'=q(\eta',\zeta',\lambda')\).
Using Lemma \ref{le:qcomputation}, we have \(g_n\nu_n=q(\eta_n,g_n\zeta_n,\lambda_n)\).
If \(\lambda<\pi/2\), then we must have \(\eta_n\to\eta\) and \(\eta'=\eta\).
If \(\lambda=\pi/2\), then we have \(q(\eta',\zeta',\lambda)=q(\eta,\zeta',\lambda)\).
Either way,
\begin{align*}
	\nu'
&=
	\lim_{n\to\infty}g_n\nu_n \\
&=
	q(\eta,\zeta',\lambda),
\end{align*}
showing that \(q(\eta,\zeta',\lambda)\in S_{L_0}\) and \(\nu\in\psi(S_{L_0})\).\\

For the reverse inclusion, assume we have \(\nu=q(\eta,\zeta',\lambda)\in S_{L_0}\).
By Lemma \ref{le:density}, the \(\Gamma\ast\Z^n\)-orbit of \(\zeta'\)
is dense in \(\partial H\), which means that we can get a sequence
\((g_n)\subset\Gamma\ast\Z^n\) such that \(g_n\zeta'\to\zeta_0\).
This done, we have
\begin{align*}
	\lim_{n\to\infty}g_n\nu
&=
	\lim_{n\to\infty}q(\eta,g_n\zeta',\lambda) \\
&=
	q(\eta,\zeta_0,\lambda),
\end{align*}
showing that \(q(\eta,\zeta_0,\lambda)\in\overline{\Pi Y}\), as desired.
\end{proof}

In the original Croke-Kleiner construction,
the watermark of every block is exactly two points if the skew is less than $\pi/2$ and one point if the skew is
precisely $\pi/2$.  The appropriate analogue in this setting is the following.\\

\begin{prop}
\label{prop:watermark}
The watermark of $Y$ contains exactly one point iff the skew is $\pi/2$.
\end{prop}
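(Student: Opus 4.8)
The plan is to compute the watermark directly from its definition as the image of $S_{L_0}$ under the map $\psi\colon q(\eta,\zeta,t)\mapsto q(\eta,\zeta_0,t)$, and to show that two points of $S_{L_0}$ collapse together under $\psi$ exactly when the skew forces them into the level $t=\pi/2$ slice (where $PY$ disappears). The key geometric fact already in hand is $\angle_{\mathrm{Tits}}(PY,S_{L_0})=\theta$: every point $\nu\in S_{L_0}$, written as $\nu=q(\eta_\nu,\zeta_\nu,\lambda_\nu)$, satisfies $\lambda_\nu\ge\theta$, with equality achieved for those $\nu$ realizing the minimal Tits distance to a pole. Since $S_{L_0}$ is an $(n-1)$-sphere sitting inside $\partial W_{L_0}=\partial E_0$, and $\partial W_{L_0}$ is itself a join $PY\ast S_{L_0}$ (the boundary of the product $E^m_-\times E^n_+$ with angle $\theta$ between the factors scaled into the Tits metric), I would use the standard spherical-join description of $\partial E_0$ with its Tits metric to pin down exactly which $\nu\in S_{L_0}$ have $\lambda_\nu=\pi/2$.

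\textbf{Case $\theta=\pi/2$.} When the skew is $\pi/2$, the subspaces $E^m_-$ and $E^n_+$ are orthogonal, so $\partial E_0$ is the \emph{spherical} join $PY * S_{L_0}$ in which every point of $S_{L_0}$ is at Tits distance exactly $\pi/2$ from all of $PY$. Hence for every $\nu\in S_{L_0}$ we have $\lambda_\nu=\pi/2$, i.e. $\nu=q(\eta_\nu,\zeta_\nu,\pi/2)=\zeta_\nu$ with the $PY$-coordinate suppressed. Applying $\psi$ gives $\psi(\nu)=q(\eta_\nu,\zeta_0,\pi/2)=\zeta_0$, a single point independent of $\nu$. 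So $\mathcal W(Y)=\{\,q(\text{-},\zeta_0,\pi/2)\,\}$ is one point.

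\textbf{Case $\theta<\pi/2$.} Here I claim $\psi|_{S_{L_0}}$ is injective, so $\mathcal W(Y)\approx S_{L_0}$ has more than one point (indeed infinitely many, since $S_{L_0}$ is an $(n-1)$-sphere). The map $\psi$ only collapses points that share a common level $t=\pi/2$; I must show no point of $S_{L_0}$ reaches that level. Using the join structure of $\partial E_0$: a point of $S_{L_0}$ realizing Tits distance $\pi/2$ from $PY$ would lie in the "equatorial" $S_{L_0}$-slice of the spherical join of $PY$ with the \emph{scaled} sphere $(1/\theta)\cdot S_{L_0}$ — but when $\theta<\pi/2$ the whole scaled factor $S_{L_0}$ sits at Tits distance $\theta<\pi/2$ from $PY$ in the relevant direction, and more precisely the maximum Tits distance from $PY$ to a point of $\partial E_0$ that still has nontrivial $E^n_+$-component is strictly less than $\pi/2$ unless one is purely in the $PY$ direction. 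Concretely: parametrize $\nu\in S_{L_0}$ by a unit vector $u$ in $E^n_+$; then $\angle_{\mathrm{Tits}}(\nu,PY)$ depends only on the angle data of $E^n_+$ relative to $E^m_-$, which is governed by $\theta$, and a short computation with the flat $E^m_-\times\langle u\rangle$ (a flat with angle $\theta$ between its axes) shows $\lambda_\nu=\theta<\pi/2$ for all such $\nu$. Since every $\nu\in S_{L_0}$ has $\lambda_\nu=\theta<\pi/2$, the $PY$-coordinate $\eta_\nu$ is retained and varies with $\nu$ (distinct $u$'s give distinct $\eta_\nu$), so $\psi(\nu)=q(\eta_\nu,\zeta_0,\theta)$ are pairwise distinct; hence $|\mathcal W(Y)|>1$.

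\textbf{The main obstacle} I anticipate is making the claim "$\lambda_\nu=\theta$ for every $\nu\in S_{L_0}$ when $\theta<\pi/2$, and $\lambda_\nu=\pi/2$ for every such $\nu$ when $\theta=\pi/2$" fully rigorous — that is, verifying that the minimal Tits distance $\theta=\angle_{\mathrm{Tits}}(PY,S_{L_0})$ is in fact \emph{attained at every point} of $S_{L_0}$, not just somewhere. This should follow from the homogeneity of the flat $E_0=E^m_-\times E^n_+$ under its translation action together with the orthogonal-group symmetry of each Euclidean factor: the stabilizer setup makes $PY$ and $S_{L_0}$ "round" subspheres of $\partial E_0$, and for round subspheres of a spherical join the pairwise Tits distance function is constant on each factor. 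I would isolate this as a small lemma about $\partial(\R^m\times_\theta\R^n)$ — its Tits boundary is the spherical join $S^{m-1}*_{?}S^{n-1}$ with the join parameter read off from $\theta$ — and then the proposition drops out by inspecting whether that join parameter equals $\pi/2$.
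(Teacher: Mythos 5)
Your first half (skew $\pi/2$ implies a one-point watermark) is fine, but the second half rests on a claim that is false in the generality of this paper. You assert that when $\theta<\pi/2$ \emph{every} point $\nu\in S_{L_0}$ has join coordinate $\lambda_\nu=\theta$, hence that the map $\psi:q(\eta,\zeta,t)\mapsto q(\eta,\zeta_0,t)$ is injective on $S_{L_0}$ and $\mathcal{W}(Y)\approx S_{L_0}$, and you propose to justify the constancy of $\lambda_\nu$ by homogeneity/``roundness.'' In the paper, $\theta$ is pinned down by the fact $\angle_{Tits}(PY,S_L)=\theta$ together with the convention that $\angle_{Tits}(C,D)$ is a \emph{minimum}; so $\theta$ is only the smallest principal angle between the subspaces $E^m_-$ and $E^n_+$ of the wall. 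If $m<n$, a dimension count gives $\dim\bigl(E^n_+\cap(E^m_-)^\perp\bigr)\ge n-m>0$, so $S_{L_0}$ always contains points at Tits angle exactly $\pi/2$ from $PY$, regardless of $\theta$; these are all collapsed by $\psi$ to the cone point $\zeta_0$, so $\psi|_{S_{L_0}}$ is not injective and the watermark is not a copy of $S_{L_0}$. Even when $m=n$, a general translation action of $\Z^m\times\Z^n$ produces unequal principal angles, and the distance from a point of $S_{L_0}$ to the great subsphere $PY$ is constant only in the isoclinic case: two homogeneous subspheres of a round sphere need not be at constant distance, because the symmetries of each factor need not preserve the pair. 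So your flagged ``small lemma'' about $\partial(\R^m\times_\theta\R^n)$ being a constant-angle join is exactly where the argument breaks.

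The conclusion you need is much weaker and is easy to recover: when $\theta<\pi/2$ you only have to exhibit two distinct points in the image. By compactness (or by the cited fact itself) the minimum $\theta$ is attained, i.e.\ some unit vector $u\in E^n_+$ makes angle $\theta<\pi/2$ with $E^m_-$; writing the corresponding boundary point as $\nu_0=q(\eta_0,\zeta',\theta)$, the antipodal vector $-u$ gives $-\nu_0=q(-\eta_0,-\zeta',\theta)\in S_{L_0}$, where $-\eta_0\neq\eta_0$ in $PY$. Then $\psi(\nu_0)=q(\eta_0,\zeta_0,\theta)$ and $\psi(-\nu_0)=q(-\eta_0,\zeta_0,\theta)$ are distinct points of the longitude since their level $\theta$ lies in $(0,\pi/2)$, so $|\mathcal{W}(Y)|\ge 2$; combined with your $\theta=\pi/2$ case this gives the proposition. (For calibration: the paper states this proposition without proof, so there is no official argument to compare against, but as written your $\theta<\pi/2$ step would fail whenever $m<n$ or the action is not isoclinic.)
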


\subsection{Local Homology of $\partial Y$}
\label{sec:localhomology}

Given a topological space $A$, we let $CA$ denote the cone on $A$.
We will also denote by \(C^oA\) the ``open cone'' on $A$; that is,
\[
	C^oA=CA-A.
\]

\begin{lemma}
Let \(\zeta\) be a pole of $Y$.  Then \(H_k(\partial Y,\partial Y-\zeta)\) is zero
when \(k<m\) and uncountably generated when \(k=m\).
\end{lemma}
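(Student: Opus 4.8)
The plan is to use the product structure $Y = E^m_- \times H$ to rewrite $\partial Y$ as a join and reduce the local homology computation to a suspension/cone statement about $\partial H$ near a chosen point. Since $\zeta \in PY$, write $\partial Y = PY \ast \partial H$, where $PY \cong S^{m-1}$. A neighborhood of a point $\zeta$ of the $S^{m-1}$-factor in the join $S^{m-1} \ast \partial H$ looks like $B^{m-1} \times C^o(\partial H)$, i.e. an $(m-1)$-ball times the open cone on $\partial H$. So the first step is to establish an excision isomorphism
\[
    H_k(\partial Y, \partial Y - \zeta) \cong H_k\bigl(B^{m-1}\times C^o(\partial H),\; B^{m-1}\times C^o(\partial H) - (z_0, v)\bigr),
\]
where $v$ is the cone point of $C^o(\partial H)$ and $z_0 \in B^{m-1}$ is the center. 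This requires a careful but routine description of the join topology near a point of the first factor; one identifies an open neighborhood of $\zeta$ in $PY \ast \partial H$ with $\open B^{m-1} \times C^o(\partial H)$ via the cone-coordinate and excises.

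Next I would compute the right-hand side. Because $C^o(\partial H)$ is contractible with the cone point $v$ as a deformation-retract-compatible basepoint, I want to invoke Lemma \ref{le:homologyofaproductwithB} with the ``ball'' factor being $\open B^{m-1}$ (so $n = m-1$ there) — but that lemma as stated pairs a ball with a path-connected space $X$ and computes $H_\ast(B\times X, B\times X - (z_0,x_0))$. Here the relevant $X$ is $C^o(\partial H)$, which is path-connected, and the distinguished point is the cone point. The lemma tells us this local homology vanishes for $\ast < m-1$ and, since $C^o(\partial H)$ is not a single point (as $\partial H$ is nonempty — indeed $IH$ is dense), it vanishes for $\ast = m-1$ as well. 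To get information in degree $m$ I would instead handle the cone factor directly: $H_k(B^{m-1}\times C^o(\partial H), B^{m-1}\times C^o(\partial H) - (z_0,v))$ should, by a Künneth/suspension argument, be $\widetilde H_{k-(m-1)}$ of the link of the cone point, which is $\partial H$ itself. Combining, $H_k(\partial Y, \partial Y - \zeta) \cong \widetilde H_{k - m + 1}(\partial H)$. Thus it vanishes for $k < m$ (since $\widetilde H_{<1}(\partial H)$ — we need $\widetilde H_0(\partial H) = 0$ in the relevant degree, which holds because... actually $\widetilde H_0$ counts path components minus one, and $\partial H$ has uncountably many path components) — so I must be careful: the vanishing for $k < m$ needs $\widetilde H_j(\partial H) = 0$ for $j < 1$, i.e. only $\widetilde H_0$ could obstruct, and that appears at $k = m-1$. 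Here I would use that the relevant reduced homology entering at degree $k = m-1$ is $\widetilde H_0(\partial H)$, and show the product-with-a-ball structure kills it (this is exactly the content of Lemma \ref{le:homologyofaproductwithB} giving vanishing at $\ast = n = m-1$ when $X$ has more than one point). For $k = m$ we get $\widetilde H_0(\partial H)$, whose rank is (number of path components) $- 1$.

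So the final step is to show $\partial H$ has uncountably many path components, which gives that $H_m(\partial Y, \partial Y - \zeta)$ is uncountably generated. By Lemma \ref{le:hedgebdry}, two geodesic rays with different itineraries lie in different path components of $\partial H$; and by the preceding results (density of $IH$, and the corollary that every irrational point is its own path component) there are uncountably many distinct irrational itineraries — indeed $\partial\N$ surjects onto the set of itinerary-classes via the irrational map $\phi$ of Corollary \ref{co:irrationalmap}, and $\partial\N$ is uncountable since $\N$ is an unbounded tree in which every vertex has valence at least two (in fact the leaves of $H$ each meet infinitely many others, as the gluing-point sets are infinite). Hence $\widetilde H_0(\partial H)$ is uncountably generated, and the isomorphism from the previous paragraph finishes the proof.

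The main obstacle I anticipate is making the join-neighborhood identification and the resulting homology computation precise without circularity: the lemmas available (Lemma \ref{le:usepathcomponents}, Lemma \ref{le:homologyofaproductwithB}, Lemma \ref{le:homologyofsuspensions}) are tailored to products-with-balls and suspensions, not directly to open cones inside joins, so I would need either to restate the local picture as a product $\open B^{m-1} \times C^o(\partial H)$ and feed it to Lemma \ref{le:homologyofaproductwithB} as-is, or to prove a small auxiliary lemma computing $H_\ast$ of $(C^o A, C^o A - v)$ as $\widetilde H_{\ast-1}(A)$ via Lemma \ref{le:homologyofsuspensions} (noting $C^oA / (C^oA - v)$ has the homotopy type of $\Sigma A$). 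Getting the degree bookkeeping exactly right — that the ball contributes a dimension shift of $m-1$ and the cone contributes a shift of $1$, totalling $m$ — is where care is needed.
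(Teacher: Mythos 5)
Your overall plan coincides with the paper's: identify a neighborhood $U$ of the pole with $B\times C^o\partial H$ (where $B$ is the open $(m-1)$-ball), shift degrees, and obtain uncountable generation at $k=m$ from $\widetilde H_0(\partial H)$, which is uncountably generated because every irrational point is a path component of $\partial H$ and $IH$ is uncountable. The only real difference is how the shift is made precise: the paper deformation retracts $U-\zeta$ onto $(C\partial H\times\partial\overline B)\cup(\partial H\times B)\approx\partial H\ast S^{m-2}=\Sigma^{m-1}\partial H$ and quotes Lemma \ref{le:homologyofsuspensions}, while you use the local homology of the cone factor together with a relative K\"unneth shift for the ball factor, and you additionally invoke Lemma \ref{le:homologyofaproductwithB} (with $X=C^o\partial H$, path connected and not a point) to get vanishing through degree $m-1$. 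These are harmless variants of the same computation.

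The flaw is exactly the bookkeeping you flag at the end. The ball contributes a shift of $m-1$ and the cone a shift of $1$, so the correct identification is $H_k(\partial Y,\partial Y-\zeta)\cong\widetilde H_{k-m}(\partial H)$, not $\widetilde H_{k-m+1}(\partial H)$. As written, your formula would give $\widetilde H_0(\partial H)\neq 0$ already at $k=m-1$, contradicting your own application of Lemma \ref{le:homologyofaproductwithB}, and would give $\widetilde H_1(\partial H)$ rather than $\widetilde H_0(\partial H)$ at $k=m$; the passage where you argue that ``$\widetilde H_0$ enters at $k=m-1$ but the ball factor kills it'' is an attempt to paper over this inconsistency rather than a resolution of it. With the corrected shift there is no tension at all: $\widetilde H_{k-m}(\partial H)=0$ for $k<m$ for trivial reasons, and $k=m$ yields $\widetilde H_0(\partial H)$. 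Two smaller points: a tree with all valences at least two need not have uncountably many ends, so justify uncountability of $IH$ by the fact that each leaf contains infinitely many gluing points (infinite valence in the nerve), as you parenthetically note; and run the low-degree cases ($k=0,1$, in particular $m=1$) with the reduced long exact sequence, as the paper does separately, so that the cone-point formula $H_j(C^oA,C^oA-v)\cong\widetilde H_{j-1}(A)$ is valid in all degrees.
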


\begin{proof}
Get an open neighborhood \(V\subset PY\) of $\zeta$ which is homeomorphic to the open \((m-1)\)-ball
and consider
\[
	U=q\bigl(V\times\partial H\times[0,\pi/2)\bigr).
\] 
This is an open neighborhood of $\zeta$ homeomorphic to
\[
	(B\ast\partial H)-\partial H
\approx
	C^o\partial H\times B
\]
where $B$ is the open $(m-1)$-ball.
Assume \(k>1\) and look at the long exact sequence for the pair \((U,U-\zeta)\).
Since $U$ is contractible, \(H_k(U)=H_{k-1}(U)=0\) and we get
\[
	H_k(U,U-\zeta)
=
	H_{k-1}(U-\zeta).
\]    
Now,
\[
	U-\zeta
\HE
	C\partial H\times\overline B-(p,x)
\]
where $p$ is the cone point of \(C\partial H\) and \(x\in B\).
This last deformation retracts onto the subspace
\begin{align*}
	(C\partial H\times\partial B)\cup(\partial H\times B)
&\approx
	\partial H\ast S^{m-2} \\
&=
	\Sigma^{m-1}\partial H
\end{align*}
(see Figure \ref{fig:CYtimesDHE}).
\begin{figure}
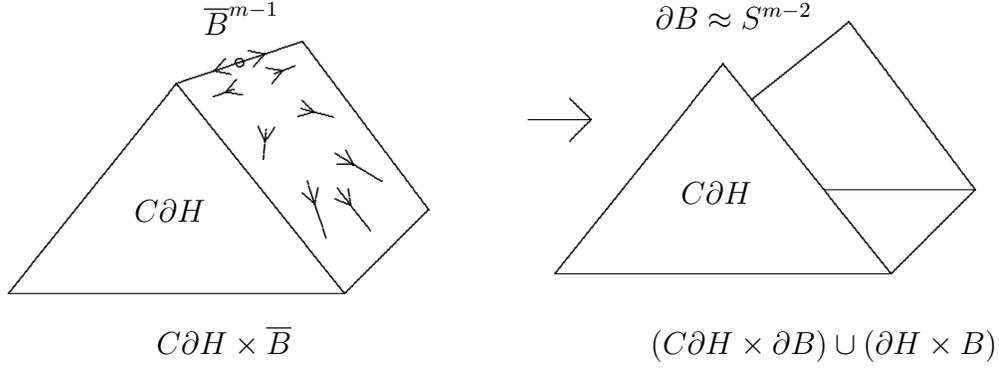

\input{CYtimesD.tex}\hspace{0cm}\\
\vspace{-3.5cm}\hspace{-0.5cm}
\input{rightarrow.tex}\hspace{0cm}\\
\vspace{-2.1cm}\hspace{5cm}
\input{CYtimesD2.tex}\hspace{0cm}\\
$C\partial H\times\overline B$
\hspace{4.5cm}
\((C\partial H\times\partial B)\cup(\partial H\times B)\)\\
\caption{A Deformation Retraction}
\label{fig:CYtimesDHE}
\end{figure}
So, applying Lemma \ref{le:homologyofsuspensions}, we get
\begin{align*}
	H_k(U,U-\zeta)
&=
	H_{k-1}(\Sigma^{m-1}\partial H) \\
&=
	\widetilde H_{k-1}(\Sigma^{m-1}\partial H) \\
&=
	\widetilde H_0(\Sigma^{m-k}\partial H) \\
&=
	0.
\end{align*}
Now consider what happens to the long exact sequence for the pair \((U,U-\zeta)\) when \(k=0,1\):\\
\[
	\to
		H_1(U)
	\to
		H_1(U,U-\zeta)
	\to
		H_0(U-\zeta)
	\to
		H_0(U)
	\to
		H_0(U,U-\zeta)
	\to
		0
\]
Because $U$ is path connected, we have \(H_1(U)=H_0(U,U-\zeta)=0\) and \(H_0(U)=\Z\).  So we are left with the short exact sequence
\[
	0
		\to 
	H_1(U,U-\zeta)
		\to 
	H_0(\Sigma^{m-1}\partial H)
		\to 
	\Z
		\to 
	0.
\]
If \(m>1\), then \(H_0(\Sigma^{m-1}\partial H)=\Z\) and \(H_1(U,U-\zeta)=0\).
If \(m=1\), then \(H_0(\Sigma^{m-1}\partial H)=H_0(\partial H)\) is uncountably generated and \(H_1(U,U-\zeta)\)
is also uncountably generated.
\end{proof}

\begin{lemma}
Let \(\zeta\in\partial Y-PY\).  Then \(H_k(\partial Y,\partial Y-\zeta)\) is
finitely generated for \(k\le m\).
\end{lemma}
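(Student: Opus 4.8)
\emph{Proof proposal.} The plan is to reduce everything to the two homological lemmas of this section by producing, for any $\zeta\in\partial Y-PY$, an open neighborhood that is a product of an open $m$-ball with an open subset of $\partial H$. Write $\zeta=q(\eta,\xi,t)$; the hypothesis $\zeta\notin PY$ forces $t>0$, and I would split into the two cases $0<t<\pi/2$ (a point ``in the interior'' of the join) and $t=\pi/2$ (an ``equatorial'' point, i.e.\ $\zeta=\xi\in\partial H\subset\partial Y$).

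For the interior case, $q$ carries the open interior $PY\times\partial H\times(0,\pi/2)$ of the join homeomorphically onto its image $\partial Y-(PY\cup\partial H)$, which is open (the complement of two compact sets). Choosing a small open metric ball $W\subset PY$ around $\eta$ — an open $(m-1)$-ball, since $PY$ is an $(m-1)$-sphere — together with open $W'\subset\partial H$ around $\xi$ and small $\epsilon>0$, the set $U=q\bigl(W\times W'\times(t-\epsilon,t+\epsilon)\bigr)$ is an open neighborhood of $\zeta$ homeomorphic to $W\times W'\times(t-\epsilon,t+\epsilon)\approx B^m\times W'$, with $\zeta$ corresponding to a point $(z_0,\xi)$, $z_0\in B^m$. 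For the equatorial case, a neighborhood of $\zeta=\xi$ in the join topology is $U=q\bigl(PY\times W'\times(\pi/2-\epsilon,\pi/2]\bigr)$ for $W'\subset\partial H$ open around $\xi$; here $q$ makes no identification except collapsing $PY$ at the level $t=\pi/2$, so $U\approx C^o(PY)\times W'$. Since $PY$ is an $(m-1)$-sphere and $C^o(S^{m-1})$ is homeomorphic to the open $m$-ball $B^m$ (the cone point going to the center), again $U\approx B^m\times W'$, with $\zeta$ at $(z_0,\xi)$ for $z_0$ the cone point.

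Granting such a $U$, I would finish as follows. Let $C$ be the path component of $\xi$ in $W'$; as $B^m$ is path connected, $B^m\times C$ is a path component of $U$, hence a local path component of $\zeta$. Lemma \ref{le:usepathcomponents} then gives
\[
	H_k(\partial Y,\partial Y-\zeta)=H_k\bigl(B^m\times C,\,B^m\times C-(z_0,\xi)\bigr),
\]
and since $C$ is path connected, Lemma \ref{le:homologyofaproductwithB}, applied with $n=m$ and $X=C$, shows this group is $0$ for $k<m$ and is $\Z$ or $0$ for $k=m$ — it is $\Z$ precisely when $C$ is a single point, which happens for instance whenever $\xi$ is irrational, by the corollary to Lemma \ref{le:hedgebdry}. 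In all cases it is finitely generated for $k\le m$.

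The bulk of the work is the identification of $U$ in the two cases; the main point to get right is the equatorial case, where $\zeta$ lies on the $\partial H$ factor of the join yet still has a product neighborhood $B^m\times W'$ precisely because the open cone on a sphere is Euclidean — this is what lets both earlier lemmas apply without change. A secondary technical point is checking that $q\bigl(PY\times W'\times(\pi/2-\epsilon,\pi/2]\bigr)$ is genuinely open in $\partial Y$ for $W'$ an arbitrarily small open subset of $\partial H$, so that $C$ is a legitimate local path component; this is a routine verification in the quotient topology of the join.
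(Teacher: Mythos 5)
Your proposal is correct and follows essentially the same route as the paper: split into the interior case (where $q$ is a homeomorphism on $PY\times\partial H\times(0,\pi/2)$) and the case $\zeta\in\partial H$, identify a neighborhood of the form $C^oPY\times V\approx B^m\times V$, pass to the local path component $B^m\times V_\zeta$, and apply Lemma \ref{le:usepathcomponents} together with Lemma \ref{le:homologyofaproductwithB}. The extra care you take with the openness of $q\bigl(PY\times W'\times(\pi/2-\epsilon,\pi/2]\bigr)$ and with the cone-point identification is exactly the routine verification the paper leaves implicit.
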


\begin{proof}
We begin by finding a local path component of $\zeta$ which is homeomorphic to
\(B\times V_\zeta\) where $B$ is the open $m$-ball and \(V_\zeta\) is a local path
component in \(\partial H\).  If \(\zeta\notin\partial H\), then
this is easy, since $q$ is a homeomorphism on \(PY\times\partial H\times(0,\pi/2)\).
If \(\zeta\in\partial H\), then we can get an open neighborhood of $\zeta$ of the form
\[ 
	U=q\bigl(PY\times V\times(0,\pi/2]\bigr)
\] 
where $V$ is an open neighborhood of $\zeta$ in \(\partial H\).
Then
\[
	U
\approx
	C^oPY\times V
\approx
	B\times V,
\]
and the path component of $\zeta$ in $U$ has the form \(B\times V_\zeta\)
where $V_\zeta$ is the path component of $\zeta$ in $V$.
Therefore
\begin{align*}
	H_k(\partial Y,\partial Y-\zeta)
&=
	H_k(B\times V_\zeta,B\times V_\zeta-\zeta).
\end{align*}
The conclusion now follows from Lemma \ref{le:homologyofaproductwithB}.
\end{proof}

\begin{corollary}
\label{co:localhomology}
The local homology \(H_k(\partial Y,\partial Y-\zeta)\) at a point \(\zeta\in\partial Y\)
is finitely generated for \(k<m\).  For \(k=m\), it's infinitely generated iff $\zeta$ is a pole.
\end{corollary}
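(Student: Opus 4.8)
The plan is to deduce this immediately from the two preceding lemmas, since together they treat every point of $\partial Y$ in mutually exclusive and exhaustive cases: a point $\zeta\in\partial Y$ either lies in $PY$ (it is a pole) or it does not. So I would simply split on this dichotomy and quote the appropriate lemma in each case.

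For the assertion about $k<m$: if $\zeta$ is a pole, the first of the two lemmas gives $H_k(\partial Y,\partial Y-\zeta)=0$, which is visibly finitely generated; if $\zeta\notin PY$, the second lemma asserts finite generation of $H_k(\partial Y,\partial Y-\zeta)$ for all $k\le m$, in particular for $k<m$. Either way the group is finitely generated. For the biconditional at $k=m$ I would argue the two implications separately. If $\zeta$ is a pole, the first lemma says $H_m(\partial Y,\partial Y-\zeta)$ is uncountably generated, hence not finitely generated. If $\zeta$ is not a pole, the second lemma applied with $k=m$ says $H_m(\partial Y,\partial Y-\zeta)$ is finitely generated; taking the contrapositive, infinite generation forces $\zeta\in PY$. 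Combining these, $H_m(\partial Y,\partial Y-\zeta)$ is infinitely generated precisely when $\zeta$ is a pole.

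There is essentially no obstacle here: the entire content lies in the two lemmas, and the only things to verify are that the pole / non-pole dichotomy is exhaustive and that ``uncountably generated'' implies ``not finitely generated,'' both of which are immediate.
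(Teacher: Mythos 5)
Your proposal is correct and matches the paper's intent exactly: the paper states this as an immediate corollary of the two preceding lemmas (giving no separate proof), and your case split on pole versus non-pole, together with the observation that uncountably generated implies not finitely generated, is precisely the deduction required.
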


\section{The Generalized Croke-Kleiner Construction}
	\label{sec:generalCK}
Let $\Gamma_-$ and $\Gamma_+$ be infinite CAT(0) groups and \(m\le n\)
be positive integers, and define
\[
	G
=
	\bigl(\Gamma_-\times\Z^m\bigr)
		\ast_{\Z^m}
	\bigl(\Z^m\times\Z^n\bigr)
		\ast_{\Z^n}
	\bigl(\Z^n\times\Gamma_+\bigr).
\]
Choose \(T_0(m,n)=E/\Z^m\times\Z^n\) with skew $\theta$ and form the spaces
\(\overline K_- = \overline K(\Gamma_-)\), \(\overline Y_- = \overline Y(\Gamma_-,m,n)\).
We also form \(\overline Y_+ = \overline Y(\Gamma_+,n,m)\) where we use the same torus $T_0$
but with ``left'' and ``right'' swapped (this corresponds to a change of coordinattes in $K$)
so that \(\overline Y_-\cap\overline Y_+=T_0\).
We define \(\overline X=\overline Y_-\cup\overline Y_+\).\\

\begin{figure}[h]
\includegraphics[height=1.5in,width=1\textwidth]{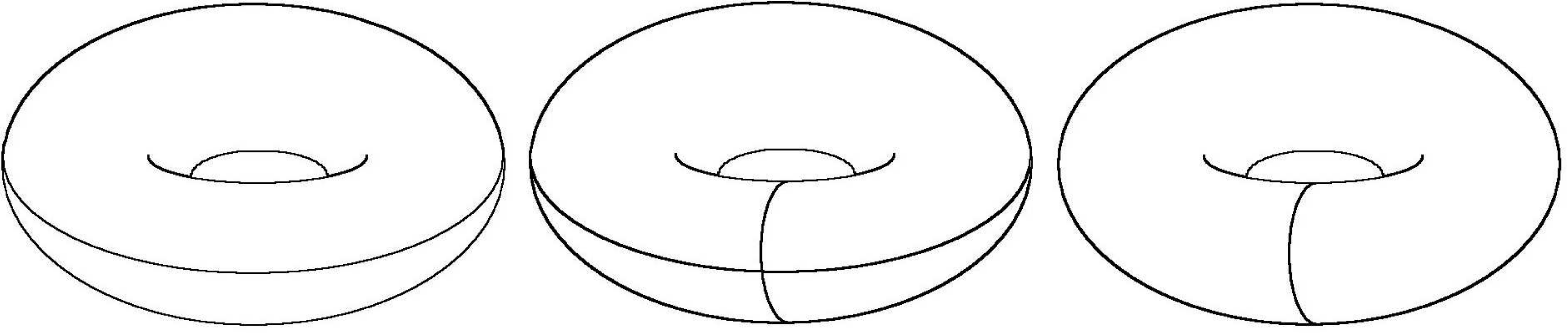}\\
\vspace{-1.4cm}\hspace{-11cm}$T^m_-$\\
\vspace{-0.8cm}\hspace{-1.3cm}$T^m_-$\hspace{1.5cm}$T^n_+$\\
\vspace{-0.1cm}\hspace{11.5cm}$T^n_+$\\
\vspace{1cm}\hspace{0cm}\(\overline K_-\times T^m_-\)
\hspace{4.5cm}\(T_0\)
\hspace{4.5cm}\(\overline K_+\times T^n_+\)\\
\caption{$\overline X$}
\label{fig:3tori}
\end{figure}

The universal cover $X$ of \(\overline X\) is a
CAT(0) $G$-space.  We denote the universal covering projection by $p$ and call $X$ a
\textit{generalized Croke-Kleiner construction for $G$ from the spaces $T_0$, $\overline{K_-}$, and $\overline{K_+}$.}
The path components of \(p^{-1}(T_0)\) are isometric copies of $E$; we call these \textit{walls}.  The path
components of \(p^{-1}(Y_-)\) and \(p^{-1}(Y_+)\) are hyperblocks; we call them \textit{left} and \textit{right blocks} respectively,
and denote them by $B_-$ and $B_+$ if we wish to designate parity.
The following facts are easy.\\

\begin{fact}
If a wall intersects a block, then that wall is contained in that block.\\
\end{fact}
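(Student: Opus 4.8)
The plan is to deduce this purely from the way walls and blocks are defined as path components of preimages under $p$, together with the elementary fact that the path components of a subspace refine those of any space containing it; the geometry of hyperblocks plays no role.

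First I would record that $T_0$ is a common subspace of both $\overline{Y}_-$ and $\overline{Y}_+$, so that $p^{-1}(T_0)\subseteq p^{-1}(\overline{Y}_-)$ and $p^{-1}(T_0)\subseteq p^{-1}(\overline{Y}_+)$. Let $W$ be a wall. By definition $W$ is a path component of $p^{-1}(T_0)$, and since $W$ is an isometric copy of $E=\R^m\times\R^n$ it is path connected. A path-connected subset of a space lies in a single path component of that space, so $W$ is contained in exactly one path component of $p^{-1}(\overline{Y}_-)$, i.e.\ in a single left block $B_-$, and likewise in a single right block $B_+$.

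Next I would argue that if $W$ meets a block $B$, then $B$ must be $B_-$ or $B_+$. The block $B$ is a path component of $p^{-1}(\overline{Y}_-)$ or of $p^{-1}(\overline{Y}_+)$. In the first case $B$ and $B_-$ are path components of the same space sharing the point of $W\cap B$, hence are equal; in the second case the same reasoning gives $B=B_+$. In either case $W\subseteq B$, which is exactly the assertion.

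I do not expect any genuine obstacle: the only points that need to be stated with care are that a path-connected set lies in one path component (applied to $W\subseteq p^{-1}(\overline{Y}_\pm)$) and that two path components of one space are either equal or disjoint. Everything else is bookkeeping with $p^{-1}$.
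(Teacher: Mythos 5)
Your argument is correct: since $T_0\subseteq\overline Y_\pm$, a wall (being path connected) lies in a single path component of each of $p^{-1}(\overline Y_-)$ and $p^{-1}(\overline Y_+)$, and any block meeting it must coincide with one of those components. The paper offers no proof, simply labeling this fact ``easy,'' and your path-component bookkeeping is exactly the intended justification.
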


\begin{fact}
If two blocks intersect, then they have opposite parity and their intersection is precisely a wall.\\
\end{fact}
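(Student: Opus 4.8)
The plan is to reduce the statement to the set-theoretic identity $\overline{Y}_-\cap\overline{Y}_+=T_0$ together with the convexity of blocks, the rest being bookkeeping with path components of preimages under $p$. For the parity assertion: two distinct left blocks are distinct path components of $p^{-1}(\overline{Y}_-)$, hence disjoint, and likewise two distinct right blocks are disjoint path components of $p^{-1}(\overline{Y}_+)$; so if two distinct blocks meet, one is a left block and the other a right block.

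Now let $B$ be a left block and $B'$ a right block with $B\cap B'\neq\emptyset$. Passing to preimages in $\overline{Y}_-\cap\overline{Y}_+=T_0$ gives
\[
 B\cap B'\;\subseteq\;p^{-1}(\overline{Y}_-)\cap p^{-1}(\overline{Y}_+)\;=\;p^{-1}(T_0),
\]
which is the disjoint union of the walls. Since the restriction of $p$ over $T_0$ is a covering map, $p^{-1}(T_0)$ is locally path connected, so the walls are exactly its connected components and are open in it; in particular every connected subset of $p^{-1}(T_0)$ lies in a single wall. As $B$ and $B'$ are closed convex subspaces of $X$, their intersection $B\cap B'$ is convex, hence connected, so $B\cap B'\subseteq W$ for one wall $W$. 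For the reverse inclusion, choose $x\in B\cap B'$ and let $W$ be the wall through $x$; then $W$ meets $B$, so $W\subseteq B$ by the preceding Fact (a wall meeting a block is contained in it), and likewise $W\subseteq B'$, whence $W\subseteq B\cap B'$. Combining the two inclusions, $B\cap B'=W$ is a wall.

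The step that needs care — the only place CAT(0) geometry rather than pure formalism is used — is the convexity of blocks, i.e.\ that each path component of $p^{-1}(\overline{Y}_\pm)$ is a convex subset of $X$. I would justify this by noting that $T_0$ is locally convex in each $\overline{Y}_\pm$: in the hyperblock $Y_\pm$ the preimage of $T_0$ is a disjoint union of walls, and each wall is convex by the product splitting of $Y_\pm$ into a Euclidean factor and a hedge $H_\pm$ (Lemma~\ref{le:hedge}) together with the explicit formula for hedge geodesics, which shows that a right leaf is convex in its hedge. Gluing two nonpositively curved spaces along such a complete locally convex subspace produces, in the universal cover, convex lifts of the two pieces (cf.\ \cite[Ch.\ II.11]{BH}); those lifts are exactly the blocks. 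With that in place the argument above goes through.
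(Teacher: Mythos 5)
Your proof is correct; the paper states this as one of the ``easy facts'' with no proof given, and your argument---distinct same-parity blocks are disjoint path components of $p^{-1}(\overline{Y}_\pm)$, while $B\cap B'$ is a convex (hence connected) subset of $p^{-1}(T_0)$ lying in a single wall, which is then swallowed by both blocks via the preceding Fact---is exactly the intended justification. The one substantive ingredient, convexity of blocks in the universal cover of the gluing, is assumed throughout the paper (it is built into Definition~\ref{defn:blockstructure}) and your appeal to the local convexity of $T_0$ together with \cite[Ch.~II.11]{BH} is the standard way to supply it.
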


\begin{fact}
Every block $B$ splits as a product \(E\times H\) where \(E\) is a Euclidean space and \(H\) is a hedge.
\end{fact}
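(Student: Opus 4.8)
The plan is to reduce the statement to Lemma \ref{le:hedge}, which already exhibits the desired product structure for hyperblocks; all that remains is to recognize an arbitrary block of $X$ as a hyperblock in the sense of Section \ref{sec:hyperblocks}.

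First I would recall that a left block is, by construction, a path component of $p^{-1}(\overline Y_-)$ and a right block a path component of $p^{-1}(\overline Y_+)$. Since $\overline X$ is nonpositively curved and each $\overline Y_\pm$ is a locally convex subspace, the inclusion $\overline Y_\pm\hookrightarrow\overline X$ is $\pi_1$-injective, and each path component of $p^{-1}(\overline Y_\pm)$ is a convex subspace of $X$ which, via the restriction of $p$, is a copy of the universal cover of $\overline Y_\pm$; this is the same standard observation (for locally convex subspaces of nonpositively curved spaces, \cite{BH}) that makes blocks convex in the first place. Now the universal cover of $\overline Y_-=\overline Y(\Gamma_-,m,n)$ is the hyperblock $Y(\Gamma_-,m,n)$, and the universal cover of $\overline Y_+=\overline Y(\Gamma_+,n,m)$ is the hyperblock $Y(\Gamma_+,n,m)$ (built with ``left'' and ``right'' interchanged). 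Hence every block of $X$ is isometric to a hyperblock.

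Finally I would invoke Lemma \ref{le:hedge}: a hyperblock splits isometrically as a product of a Euclidean space with a hedge $H$ carrying its natural block structure. Applying this to $Y(\Gamma_-,m,n)$ in the case of a left block, and to $Y(\Gamma_+,n,m)$ in the case of a right block, yields in each case the asserted splitting $B\cong E\times H$ with $E$ Euclidean.

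There is no real obstacle here: the content is entirely in Lemma \ref{le:hedge} together with the standard fact that a locally convex subspace of a nonpositively curved space lifts, on path components of its preimage, to copies of its universal cover. The one point worth noting is that the product splitting furnished by Lemma \ref{le:hedge} is intrinsic to the CAT(0) space in question, so it transports along the isometry identifying a block with a hyperblock without any further checking.
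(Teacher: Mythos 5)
Your proposal is correct and follows the paper's intended route: the paper simply identifies each block with a hyperblock (path components of $p^{-1}(\overline Y_\pm)$ lift to copies of the universal covers $Y(\Gamma_-,m,n)$ and $Y(\Gamma_+,n,m)$, since $\overline Y_\pm$ is locally convex and hence $\pi_1$-injective in $\overline X$) and then cites the splitting of a hyperblock as $E\times H$ from Lemma \ref{le:hedge}. The paper treats this as immediate and offers no separate proof; your write-up just makes the covering-space step explicit.
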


In this last fact, $E$ and $H$ are of two types depending on the parity of $B$.  If $B$ is a left block,
then $E$ is an isometric copy of \(E^m_-\) and $H$ is a hedge whose leaves are copies of $K_-$
and $E^m_-$.  If $B$ is a right block, then \(E\) is a copy of \(E^n_+\) and
the leaves of $H$ are copies of $K_+$ and $E^n_+$.  We call
$H$ the \textit{hedge factor} of $B$.\\

\subsection{The Block Structure}
To see that the collection of hyperblocks satisfies Definition \ref{defn:blockstructure}, we need only the
following.\\

\begin{lemma}
There is an \(\epsilon>0\) such that two blocks intersect iff their $\epsilon$-neighborhoods intersect.
\end{lemma}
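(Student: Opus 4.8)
The plan is to exhibit a uniform lower bound on the distance between two disjoint blocks, using the fact that blocks are convex and that their intersections with any given wall are well-understood. First I would fix a wall $W$ (a path component of $p^{-1}(T_0)$, isometric to $E$) and note that a block $B$ intersecting $W$ actually contains $W$ (by the first Fact), while the walls contained in a single block $B$ form a discrete, cocompact family inside $B$'s hedge factor: indeed, in the splitting $B = E \times H$, the walls of $B$ correspond to the \emph{right leaves} of $H$, and by Lemma~\ref{le:hedge} distinct leaves meet in at most a point, with the gluing points forming an orbit of a geometric action, hence quasi-dense and in particular \emph{uniformly separated}. So there is a constant $\epsilon_0 > 0$, depending only on the chosen $T_0$, $\overline K_-$, $\overline K_+$ (and hence the same for every block of either parity, by cocompactness of the $G$-action and transitivity on blocks of each parity), such that any two distinct walls contained in a common block are at distance $\geq 2\epsilon_0$ from one another.

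Next I would upgrade this to a statement about disjoint blocks. Suppose $B_0$ and $B_1$ are blocks with $B_0 \cap B_1 = \emptyset$; by the Facts above they then have (combinatorial) nerve-distance at least $2$ — equivalently, they can be connected through a chain of blocks $B_0, B', B_1$ where $B'$ has opposite parity to both. Then $B' \cap B_0 = W_0$ and $B' \cap B_1 = W_1$ are walls contained in $B'$, and they are distinct (otherwise $B_0 \cap B_1 \supseteq W_0 \neq \emptyset$). By the previous paragraph, $d(W_0, W_1) \geq 2\epsilon_0$ inside the convex block $B'$, and since $B_0 \supseteq W_0$, $B_1 \supseteq W_1$, we get $d(B_0, B_1) \geq d(W_0, W_1) - 0$; more carefully, a geodesic $[x_0, x_1]$ with $x_0 \in B_0$, $x_1 \in B_1$ must, by convexity of $B'$ and the same argument as in Lemma~\ref{le:irrationaldistance} (the geodesic between points of $B_0$ and $B_1$ passes through $B'$, hence through $W_0$ and then $W_1$), pick up a subsegment of length $\geq 2\epsilon_0$ running between $W_0$ and $W_1$. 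Hence $d(B_0, B_1) \geq 2\epsilon_0 > \epsilon_0$.

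Finally, I would set $\epsilon = \epsilon_0$ (or any smaller positive number) and verify the biconditional. If $B_0 \cap B_1 \neq \emptyset$, their $\epsilon$-neighborhoods obviously meet. Conversely, if $B_0 \cap B_1 = \emptyset$, then $d(B_0, B_1) \geq 2\epsilon_0 > 2\epsilon$ by the preceding paragraph — here I use that $B_0$ and $B_1$ are \emph{closed convex} subsets of the complete CAT(0) space $X$, so the distance between them is realized and positive — and therefore their $\epsilon$-neighborhoods are disjoint as well. This is exactly condition (3) of Definition~\ref{defn:blockstructure}; conditions (1) and (2) having been recorded in the Facts, the collection of hyperblocks is a block structure on $X$.

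The main obstacle I anticipate is making the uniform separation constant $\epsilon_0$ genuinely uniform: one must check that the minimal separation between distinct walls inside a block does not degenerate across the infinitely many blocks of $X$. The way around this is to push the question downstairs: the walls of a block, gluing points of a leaf, and so on, are all $G$-equivariant data, and $G$ acts cocompactly on $X$ while acting transitively on left blocks and on right blocks (Fact~\ref{fact:spherepermutation} is the analogous statement for the spheres $S_L$); so the separation constant is realized already in the compact quotient $\overline X$, where it is manifestly a single positive number depending only on $T_0$, $\overline K_-$, $\overline K_+$ and the skew $\theta$. The remaining subtlety — that a geodesic from $B_0$ to $B_1$ really does thread through both walls $W_0 \subset B'$ and $W_1 \subset B'$ — is handled exactly as in the proof of Lemma~\ref{le:irrationaldistance}, using convexity of $B'$ and the fact that its topological frontier is covered by the neighboring blocks.
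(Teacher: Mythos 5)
Your underlying geometric idea is sound, and in fact it isolates the same quantity the paper uses: disjoint blocks are separated because a geodesic between them must cross two distinct walls of an intermediate block, and distinct walls of a block are separated by the gluing-point constant of the hedge, i.e.\ by the length of the shortest essential loop in $\overline K_-$ or $\overline K_+$ (the paper simply takes $\epsilon$ smaller than half that length). The problem is that, as written, your argument is circular within the paper's logical order. The nerve distance between blocks, the itinerary $\Itin[B_0,B_1]$, Lemma \ref{le:itins}, and the argument of Lemma \ref{le:irrationaldistance} are all developed in Section \ref{sec:NervesandItineraries} for a collection that is already known to be a block structure in the sense of Definition \ref{defn:blockstructure}; condition (3) of that definition is exactly the statement you are proving, and the paper deduces Theorem A$'$ (the nerve of hyperblocks is a tree, which is what makes ``itinerary between blocks'' well defined) as a corollary of this lemma. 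Indeed, the Claim inside Lemma \ref{le:irrationaldistance} explicitly invokes condition (3). So the key step ``a geodesic from $B_0$ to $B_1$ passes through $W_0$ and then $W_1$'' is not yet available to you: you would have to prove it from scratch (using convexity of blocks, the fact that the topological frontier of a block is a union of its walls, and the tree-of-spaces/Bass--Serre description of $X$ as the universal cover of $\overline X$), and your proposal only gestures at this. A smaller slip: ``nerve-distance at least $2$ --- equivalently, a chain $B_0, B', B_1$'' is false when the distance exceeds $2$; your later appeal to an intermediate block of the itinerary would repair it, but again only once that machinery is legitimately in place. (Also, attainment of $d(B_0,B_1)$ for two closed convex sets is neither automatic nor needed; the uniform lower bound does all the work.)

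The paper sidesteps all of this by working downstairs, with no nerve or itinerary input: project a geodesic $\gamma$ joining two disjoint blocks to $\overline X$. Between the time $p\gamma$ leaves $\overline Y_-$ through $T^n_+$ and the time it re-enters, it lies in $\overline K_+\times T^n_+$, and its $\overline K_+$-coordinate is an essential loop, hence of length at least $2\epsilon$; since projections do not increase distance, $d(B_1,B_2)\ge\epsilon$. This is the same geometry as your wall-separation bound (exiting and re-entering a left leaf at two distinct gluing points is precisely the upstairs picture of an essential loop in $\overline K_+$), but packaged so that only elementary covering-space facts are used. If you want to keep your upstairs formulation, first prove directly --- without Definition \ref{defn:blockstructure}(3) --- that any geodesic leaving a block exits through a wall and that a geodesic between disjoint blocks meets two distinct walls contained in a common block; otherwise, follow the quotient argument.
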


\begin{proof}
Take \(\epsilon>0\) to be smaller than half
the lengths of the shortest nontrivial loops in \(\overline{K}_-\) and \(\overline{K}_+\).
Suppose $B_1$ and $B_2$ are disjoint blocks, and
let \(\gamma\) be a geodesic starting in $B_1$ and ending in $B_2$.  Without loss of generality,
assume \(B_1\) is a left block.
Then \(\gamma'=p\gamma\) is a local geodesic in \(\overline{X}\) which leaves \(\overline{Y}_-\) at a point  
\(\gamma'(t_0)\in T^n_+\).  If \(\gamma'|_{(t_0,1]}\) stays in \(\overline K_+\times T^n_+-T^n_+\),  
then $B_2$ intersects $B_1$ at \(\gamma(t_0)\).  So \(\gamma'|_{[t_0,1]}\) reenters \(\overline{Y}_-\), at another
point \(\gamma'(t_1)\in T^n_+\).  Then \(\gamma'|_{(t_0,t_1)}\subset\overline K_+\times T^n_+\)
and its projection onto the \(\overline{K_+}\)-coordinate is a nontrivial loop in \(\overline{K}_+\).
Since projections do not increase distance, it follows that the length of $\gamma$ is also
at least \(2\epsilon\), guaranteeing that \(d(B_1,B_2)\ge\epsilon\).
\end{proof}

As a corollary, we get\\

\begin{ThmA'}
The nerve $\N$ of blocks is a tree.
\end{ThmA'}

\begin{fact}
Given a block $B$, \(\Pi B\), as defined in the previous section, is precisely the set of poles of blocks neighboring $B$.
\end{fact}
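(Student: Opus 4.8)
The plan is to reduce the statement to a single computation on one wall, using only the structural Facts already established. First I would identify the neighbors of $B$ with the walls contained in $B$. By the two Facts immediately preceding this one, the blocks neighboring $B$ are exactly the opposite-parity blocks $B'$ for which $B\cap B'$ is a whole wall, and such a wall lies in both $B$ and $B'$. Conversely, writing $B=E_B\times H_B$ as in Lemma~\ref{le:hedge}, every wall contained in $B$ has the form $W_L=E_B\times L$ for a right leaf $L$ of $H_B$. For such a $W_L$ there is a block $B'_L$ of the opposite parity containing it (since $W_L\subseteq p^{-1}(T_0)$ and $T_0$ lies in both $\overline{Y}_-$ and $\overline{Y}_+$), and $B'_L$ is unique because blocks of equal parity are disjoint; then $W_L\subseteq B\cap B'_L$, and since intersecting blocks meet in precisely a wall, $B\cap B'_L=W_L$, so $B'_L$ is a neighbor of $B$. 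As every neighbor of $B$ shares a wall with $B$ and that wall is contained in $B$, every neighbor arises this way, so $L\mapsto B'_L$ is a bijection between right leaves of $H_B$ and neighbors of $B$. Since $\Pi B=\bigcup_{\textrm{right leaves }L}S_L$, it now suffices to prove that $S_L=PB'_L$ inside $\partial W_L\subseteq\partial X$ for each right leaf $L$.

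For that single equality: the wall $W=W_L$ is an isometric copy of $E\cong\R^{m+n}$ on which a conjugate $\mathrm{Stab}(W)$ of $\Z^m\times\Z^n$ acts by translations, its two factors translating along the distinguished subspaces $E^m_-$ and $E^n_+$ of $W$ (which meet at the skew $\theta$). On one hand, by the definition of $S_L$ from the previous section, $S_L$ is the limit set of the $\Z^n$-factor of $\mathrm{Stab}(W)$, so $S_L=\partial E^n_+$ inside $\partial W$ when $B$ is a left block (interchange the roles of $m,n$ and of $\pm$ when $B$ is a right block). On the other hand, $B'_L$ is itself a hyperblock, and if we fix the isometry identifying $B'_L$ with a product $E^n_+\times(\textrm{hedge})$ coming from the splitting $\Z^n\times(\Gamma_+\ast\Z^m)$ of its stabilizer — so that, as in Lemma~\ref{le:qcomputation}, the $\Z^n$ acts only in the first coordinate — then $PB'_L$ is the boundary of that first Euclidean factor. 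Since $\mathrm{Stab}(W)$ sits inside $\mathrm{Stab}(B'_L)$ with the same $\Z^n$ as its first factor, the restriction of that Euclidean factor to $W$ is exactly the subspace translated by this $\Z^n$, namely $E^n_+$. Hence $PB'_L=\partial E^n_+=S_L$, completing the reduction.

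The step I expect to be the main obstacle is precisely this last identification, inside $W$, of the Euclidean pole-factor of the neighbor $B'_L$ with the $\Z^n$-translation subspace whose limit set defines $S_L$. It forces one to keep careful track of the left/right swap used to build $\overline{Y}_+=\overline{Y}(\Gamma_+,n,m)$ and of the fact that the product splitting of a hyperblock uses the \emph{orthogonal} complement of its pole plane inside each wall, which differs from the skewed subspace $E^n_+$ unless $\theta=\pi/2$. Everything else — the description of walls contained in a block, the uniqueness of the opposite-parity block across a wall, and the passage to boundaries of convex subspaces — is routine given Lemmas~\ref{le:hedge} and~\ref{le:qcomputation} together with the preceding Facts.
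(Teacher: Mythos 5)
Your argument is correct. The paper offers no proof of this Fact at all---it is asserted as one of the ``easy'' facts, foreshadowed in Section 5 by the remark that $\Pi Y$ will later be seen to be the set of poles of hyperblocks meeting $Y$ in a wall---so there is no authorial argument to compare against, and what you wrote is the natural justification the author evidently had in mind. Your reduction, via the bijection between right leaves of the hedge factor of $B$ and the neighboring blocks, to the single equality $S_L=PB'_L$ inside $\partial W_L$ is exactly right, and you correctly isolate and resolve the one point requiring care: inside a wall $W\cong E$, both $S_L$ (the limit set of the $\Z^n$-factor of the wall stabilizer) and the pole sphere of the opposite-parity block containing $W$ are the boundary sphere of the translation subspace $E^n_+$ of that same central $\Z^n$---the skewed subspace, not the orthogonal complement $(E^m_-)^\perp$ that appears as a right leaf in the hedge splitting of $B$ from Lemma \ref{le:hedge}.
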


\begin{ThmB'}
Let \(B_0\) and \(B_1\) be blocks, and $D$ be the distance between the corresponding vertices in
$N$.  Then:\\
\begin{enumerate}
\item If \(D=1\), then \(\partial B_0\cap\partial B_1=\partial W\) where $W$ is the wall \(B_0\cap B_1\).\\
\item If \(D=2\), then \(\partial B_0\cap\partial B_1=PB_{1/2}\) where
\(B_{1/2}\) intersects \(B_0\) and \(B_1\).\\
\item If \(D>2\), then \(\partial B_0\cap\partial B_1=\emptyset\).\\
\end{enumerate}
\end{ThmB'}

\begin{proof}
(1) If \(D=1\), then \(B_0\cap B_1\) is a wall $W$.  That \(\partial W\subset\partial B_0\cap\partial B_1\)
is obvious.  To see the reverse inclusion, suppose we have asymptotic geodesic rays
\(\alpha_0\subset B_0\) and \(\alpha_1\subset B_1\).  Every
geodesic from $\alpha_0$ to $\alpha_1$ intersects the wall $W$ (Lemma \ref{le:itins}).
Thus we can get a sequence of points in $W$ which remain asymptotic to $\alpha_0$ and $\alpha_1$.\\

(2) If \(D=2\), then there is one vertex between \(v_{B_0}\) and \(v_{B_1}\); call it
\(v_{B_{1/2}}\).
We will show that
\[
	PB_{1/2}
		\subset
	\partial B_0\cap\partial B_1
		\subset
	\partial W_0\cap\partial W_1
		\subset
	PB_{1/2}
\]
where \(W_i=B_{1/2}\cap B_i\) for \(i=0,1\).
The first inclusion follows naturally from
the fact that geodesic rays in \(B_{1/2}\) which go to poles are precisely those whose
projections onto the hedge coordinate of \(B_{1/2}\) are constant, and can be therefore be
constructed easily in \(W_0\) and \(W_1\).
The second inclusion follows by the same argument as in (1).
For the third inclusion, suppose \(\alpha_0\subset W_0\) and \(\alpha_1\subset W_1\) are asymptotic geodesic rays,
and let \(\overline\alpha_0\) and \(\overline\alpha_1\) be their projections
onto the hedge coordinate of \(B_{1/2}\).  Let \(L_0\) and \(L_1\) be the leaves containing the images of
these two maps.  Since $L_0$ and $L_1$ are disjoint, every geodesic from $\overline\alpha_0$
to \(\overline\alpha_1\) leaves $L_0$ at the same gluing point and enters $L_1$ at the same gluing point.
Hence, the only way for $\alpha_0$ and $\alpha_1$ to be asymptotic is if
$\overline\alpha_0$ and $\overline\alpha_1$ are both constant.
Therefore \(\alpha_0\) and \(\alpha_1\) go to a pole of \(B_{1/2}\).\\

Finally, we show (3) by contradiction: Suppose \(\zeta\in\partial B_0\cap\partial B_1\),
and write \(\Itin[B_0,B_1]=[\overline B_1,...,\overline B_n]\) where \(n=D+1\) by
hypothesis.
By the same argument as in (1), we actually have that
\(\zeta\in\partial B_i\) for every \(1\le i\le n\).  By (2), then, it follows that
\(\zeta\in PB_i\) for every \(1<i<n\).  But \(PB_2\cap PB_3=\emptyset\), because \(\angle_{Tits}(PB_2,PB_3)=\theta\),
giving us a contradiction!
\end{proof}

\subsection{Poles and $n$-Vertices}
In \cite{CK}, a boundary point $\zeta$ is called a \textit{vertex} if it has a local path component $\pi$ and a homeomorphism
from $\pi$ to the open cone on the cantor set taking $\zeta$ to the cone point.  An appropriate analogue in this
context is the following: A point \(\zeta\in\partial X\) is called a \textit{vertex} if the local homology of \(\partial X\)
at $\zeta$ is uncountably generated in some dimension.  If $n$ is the smallest
dimension in which this local homology is uncountably generated, then we say that $\zeta$ is an \textit{$n$-vertex}.
The goal of this section is to distinguish topologically which vertices in $RX$ are poles.
A key tool is the following:\\

\begin{ThmC'}
Let $B$ be a block and \(\zeta\in\partial B\) not be a pole of any neighboring block.  Then $\zeta$ has a local
path component which stays in \(\partial B\).
\end{ThmC'}

The proof of this theorem is the same as the proof of Theorem C (\cite[Le 4]{CK}) with the following observation.
First of all, the topological frontier of a left block is
a subcollection of path components of \(p^{-1}(T^n_+)\), and the topological frontier
of a right block is a subcollection of path components of \(p^{-1}(T^m_-)\); these path
components are isometric copies of \(E^n_+\) and \(E^m_-\) and are the appropriate
replacements for the ``singular geodesics'' given the original proof.\\

Recall that $m$ and $n$ are the dimensions of \(E^m_-\) and \(E^n_+\) respectively and that \(1\le m\le n\).\\

\begin{lemma}
\hspace{0cm}\\
\label{le:fewfakepoles}
\begin{enumerate}
\item If $m=n$, then $m$-vertices in $RX$ are poles.\\
\item If \(m<n\), then $m$-vertices in $RX$ are poles of left blocks.\\
\item If \(m<n\), then $n$-vertices in right block boundaries are poles of right blocks.\\
\end{enumerate}
\end{lemma}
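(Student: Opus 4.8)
The plan is to compute $H_\ast(\partial X,\partial X-\zeta)$ at a rational point $\zeta$ by replacing $\partial X$ with a single block boundary and then applying Corollary~\ref{co:localhomology}, while keeping track of the two relevant ``critical dimensions'': $m$ for a left block and $n$ for a right block (these are the dimensions in which Corollary~\ref{co:localhomology} detects poles).

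The heart of the argument is the following reduction. Given $\zeta\in RX$, let $\mathcal B_\zeta=\{B:\zeta\in\partial B\}$; this is nonempty by Corollary~\ref{co:RXIX}(1), and by Theorem~B$'$ its members lie pairwise at distance $\le 2$ in $\N$. I claim there is a block $B\in\mathcal B_\zeta$ such that $\zeta$ is a pole of no neighbor of $B$. If $\zeta$ is a pole of some block $B_0$, take $B=B_0$: a neighbor $B'$ of $B_0$ satisfies $\angle_{Tits}(PB_0,PB')=\theta>0$, so $PB_0\cap PB'=\emptyset$ and hence $\zeta\notin PB'$. If $\zeta$ is a pole of no block, then Theorem~B$'$(2) rules out two members of $\mathcal B_\zeta$ at distance exactly $2$ (their common boundary point would be a pole of the block between them), so any two members of $\mathcal B_\zeta$ are adjacent or equal; as $\N$ is a tree this forces $|\mathcal B_\zeta|\le 2$, the two members adjacent when there are two, and then any $B\in\mathcal B_\zeta$ works, since its only possible neighbor in $\mathcal B_\zeta$ contains $\zeta$ in its boundary but not as a pole and every other neighbor omits $\zeta$ entirely. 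With such a $B$ in hand, Theorem~C$'$ supplies a local path component $\pi$ of $\zeta$ with $\pi\subset\partial B$; intersecting the ambient open set with $\partial B$ shows that $\pi$ is also a local path component of $\zeta$ in $\partial B$, so Lemma~\ref{le:usepathcomponents}, used once in $\partial X$ and once in $\partial B$, yields
\[
	H_\ast(\partial X,\partial X-\zeta)=H_\ast(\pi,\pi-\zeta)=H_\ast(\partial B,\partial B-\zeta).
\]
Note that $\zeta$ is a pole of $B$ precisely in the first case above, where $B=B_0$.

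With this reduction and Corollary~\ref{co:localhomology} --- which says $H_\ast(\partial B,\partial B-\zeta)$ is finitely generated below the critical dimension of $B$, is finitely generated in the critical dimension exactly when $\zeta$ is not a pole of $B$, and (when $\zeta$ is a pole of $B$) is uncountably generated in the critical dimension while vanishing below it --- the three statements follow by bookkeeping. For (1): since $m=n$, every block has critical dimension $m$, so if $\zeta\in RX$ is not a pole of any block then $H_\ast(\partial X,\partial X-\zeta)$ is finitely generated for $\ast\le m$ and $\zeta$ is not an $m$-vertex; thus an $m$-vertex of $RX$ must be a pole. For (2), with $m<n$: let $\zeta\in RX$ be an $m$-vertex. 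Were $\zeta$ a pole of no block, then --- a right block having critical dimension $n>m$, so $H_\ast(\partial B,\partial B-\zeta)$ is finitely generated for $\ast\le m$ regardless of parity --- the reduction would show $\zeta$ is not an $m$-vertex; so $\zeta$ is a pole of some $B_0$, and if $B_0$ were a right block then $H_\ast(\partial X,\partial X-\zeta)=H_\ast(\partial B_0,\partial B_0-\zeta)$ would vanish below $n$ and be uncountably generated in dimension $n$, making $\zeta$ an $n$-vertex rather than an $m$-vertex; hence $B_0$ is a left block. For (3), with $m<n$: let $B_+$ be a right block and $\zeta\in\partial B_+$ an $n$-vertex. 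Were $\zeta$ a pole of no block, then --- applying the reduction with the choice $B=B_+$ (valid since $\zeta\in\partial B_+$ and $\zeta$, being a pole of no block, is a pole of no neighbor of $B_+$) --- $H_\ast(\partial X,\partial X-\zeta)=H_\ast(\partial B_+,\partial B_+-\zeta)$ would be finitely generated for $\ast\le n$, contradicting that $\zeta$ is an $n$-vertex; so $\zeta$ is a pole of some $B_0$, and if $B_0$ were a left block the reduction would make $H_\ast(\partial X,\partial X-\zeta)$ vanish below $m$ and be uncountably generated at $m$, so that $\zeta$ would be an $m$-vertex rather than an $n$-vertex; hence $B_0$ is a right block.

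I expect the reduction step to be the main obstacle --- one must, for an arbitrary rational point, find a block to which Theorem~C$'$ applies and then confirm that the resulting local path component computes $H_\ast(\partial X,\partial X-\zeta)$ rather than merely $H_\ast(\partial B,\partial B-\zeta)$. This rests on the tree structure of $\N$ (confining $\mathcal B_\zeta$ to a star about one vertex) and on the positivity of the skew $\theta$ (giving $PB\cap PB'=\emptyset$ for neighboring blocks $B,B'$). After that, the three assertions are a routine count of the critical dimensions $m$ and $n$.
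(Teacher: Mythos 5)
Your proposal is correct and takes essentially the same route as the paper: find a block $B$ with $\zeta\in\partial B-\Pi B$, use Theorem~C$'$ together with Lemma~\ref{le:usepathcomponents} to identify $H_\ast(\partial X,\partial X-\zeta)$ with $H_\ast(\partial B,\partial B-\zeta)$, and then read off the three statements from Corollary~\ref{co:localhomology} via the critical dimension ($m$ for left blocks, $n$ for right blocks). The only difference is that you make explicit two steps the paper leaves implicit --- the existence of such a $B$ (using $\angle_{Tits}(PB,PB')=\theta>0$ for neighboring blocks) and the fact that the local path component computes local homology in both $\partial X$ and $\partial B$ --- which is a welcome amplification, not a different argument.
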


\begin{proof}
Choose any \(\zeta\in RX\).
Recall that in Section \ref{sec:NervesandItineraries}, we showed that the collection of rational points
$RX$ of \(\partial X\) is the same as the union of block boundaries.
So there is a block $B$ such that \(\zeta\in\partial B-\Pi B\).
If $B$ is a left block, then let \(k=m\), and if $B$ is a right block, let \(k=n\).
Applying Theorem C$'$, Lemma \ref{le:usepathcomponents}, and Corollary \ref{co:localhomology}, we know
that $\zeta$ is a $k$-vertex iff $\zeta$ is a pole (of $B$).
\end{proof}

Now, it is concievable in the case where $m<n$ that some $n$-vertices of left block boundaries are not poles.
Here is the last resort for dealing with this situation.\\

\begin{lemma}
\label{le:fakerightpoles}
Assume \(m<n\) and let $\pi$ be a path component in the set of $n$-vertices in $RX$.
Then:\\
\begin{enumerate}
\item If $\pi$ is compact, then every point of $\pi$ is a pole.\\
\item If $\pi$ is not compact, then no point of $\pi$ is a pole.\\
\end{enumerate}
\end{lemma}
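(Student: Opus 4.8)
The plan is to show that the property "being a pole'' is locally constant along the set of $n$-vertices in $RX$, and then to use the hypothesis on $\pi$ (compact versus not) to pin down which of the two constant values occurs. To that end, fix a path component $\pi$ of the set of $n$-vertices in $RX$, and suppose it contains a pole $\zeta_0$, say of a right block $B$ (if it contains no pole at all, part (1) is vacuous and part (2) is trivially true, so assume it does). By Lemma \ref{le:fewfakepoles}(3) applied at such points, and by Lemma \ref{le:fewfakepoles}(2) applied to the $m$-vertices, the poles of right blocks are \emph{exactly} the $n$-vertices lying in right block boundaries, while any point of $\pi$ lying in a left block boundary but not being a pole is an $n$-vertex coming from the hedge factor. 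So the dichotomy to resolve is: inside $\pi$, either every point is a pole of a right block, or $\pi$ consists entirely of "spurious'' $n$-vertices in left blocks (i.e.\ $n$-vertices of left hedge factors that are not poles).

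First I would analyze the local structure of $\partial X$ near a pole $\zeta$ of a right block $B$. Using Theorem C$'$ and Lemma \ref{le:usepathcomponents}, a local path component of $\zeta$ inside $\partial B$ is governed by the hyperblock model of Section \ref{sec:hyperblocks}: near a pole of $B$ the space looks like $C^o\partial H \times B^{n-1}$ where $H$ is the (right) hedge factor, whose boundary $\partial H$ has uncountably many path components (it is nowhere locally path connected). The key point is that the set $PB$ of poles of $B$ is the $(n-1)$-sphere $\partial E^n_+$, and a neighborhood in $\partial B$ of a non-boundary point of this sphere retracts as in the proof of the local homology lemmas to $\Sigma^{n-1}\partial H$; this is where the uncountable generation in dimension $n$ comes from. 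Crucially, near an \emph{interior} point of the pole-sphere, the local path component of $\zeta$ in $\partial X$ meets $PB$ in a full open $(n-1)$-ball — a manifold point of the sphere $PB$ — whereas at points of $\partial B$ that are $n$-vertices \emph{not} on $PB$, any such local model forces a different global shape (the $n$-vertex set there is locally a product $B^{m}\times(\text{$n$-vertices of the left hedge factor})$, by the argument of Lemma \ref{le:fewfakepoles}), which is $m$-dimensional in the pole directions rather than $(n-1)$-dimensional. Thus along $\pi$, the subset of $\pi$ consisting of right-block poles is open in $\pi$ (each such point has a whole $(n-1)$-ball of poles around it inside $\pi$) and also closed in $\pi$ (a limit of right-block poles stays an $n$-vertex of the right-block type by the same local homology computation and by Theorem B$'$, which controls how block boundaries can overlap). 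By connectedness of $\pi$, either all of $\pi$ is right-block poles or none of it is.

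It remains to match the two cases with compactness. If $\pi$ is entirely right-block poles, then locally $\pi$ is an open $(n-1)$-ball inside the pole sphere $PB$ of whichever right block $B$ it meets; but distinct right blocks have disjoint pole spheres by Theorem B$'$(3) (their poles are a Tits distance $\theta$ apart, as $\angle_{Tits}(PB_2,PB_3)=\theta$), and within a single $\partial B$ the pole set is the entire sphere $PB\approx S^{n-1}$, which is compact — so $\pi$, being open and closed in this sphere and connected, is all of $S^{n-1}$, hence compact. Conversely, if no point of $\pi$ is a pole, then $\pi$ lies in the set of spurious $n$-vertices of left hedge factors; here the local model $B^m\times(\text{$n$-vertices of a left hedge})$ shows $\pi$ is (locally, hence globally by the same gluing-point analysis used in Lemma \ref{le:density} and Lemma \ref{le:hedgebdry}) a product of $\R^m$ with a path component of $n$-vertices in the left hedge boundary, and the $\R^m$ factor already makes $\pi$ non-compact. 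The main obstacle I anticipate is the bookkeeping in the local homology computation that certifies the dichotomy is genuinely locally constant: one must check that near a limit of right-block poles the local homology in dimension $n$ is still uncountably generated of the "pole'' type and cannot degrade into the $m$-dimensional hedge-factor type, which requires carefully combining Corollary \ref{co:localhomology} with Theorem B$'$ to rule out a block boundary sneaking in and changing the local picture.
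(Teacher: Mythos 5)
Your overall route is the same as the paper's (an all-or-nothing dichotomy for poles on $\pi$, then ``all poles $\Rightarrow$ $\pi=PB\approx S^{n-1}$ compact'' and ``no poles $\Rightarrow$ the $m$-ball directions force non-compactness''), but the load-bearing dichotomy step has a genuine gap. Your justification of openness --- ``each such point has a whole $(n-1)$-ball of poles around it inside $\pi$'' --- shows that $PB\subset\pi$ (true: $PB\approx S^{n-1}$ is path connected and consists of $n$-vertices), but it does not show that the pole set is open in the subspace topology of $\pi$. For that you would need every point of $\pi$ sufficiently close to a pole $\zeta\in PB_+$ to be itself a pole, and a priori spurious $n$-vertices lying in left block boundaries could accumulate at $\zeta$. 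Theorem C$'$ only controls the \emph{local path component} of $\zeta$ inside $\partial B_+$, and a nearby point of $\pi$ need not lie in that component: it is joined to $\zeta$ only by a possibly long path inside the $n$-vertex set. So the ``open'' half of your open-and-closed argument is unjustified as written (it essentially presupposes the dichotomy). The paper obtains the dichotomy by a path argument with exactly the tools you cite: if $\alpha:[0,1]\to\pi$ runs from a pole to a non-pole, closedness of the pole set gives a last pole time $\overline t$, say $\alpha(\overline t)\in PB_+$; Theorem C$'$ forces $\alpha(\overline t,s)\subset\partial B_+$ for some $s>\overline t$, and by Lemma \ref{le:usepathcomponents} and Corollary \ref{co:localhomology} non-pole points of a right hyperblock boundary are not $n$-vertices, contradicting $\alpha(\overline t,s)\subset\pi$. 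Your openness step should be replaced by (or repaired into) this argument; connectedness of $\pi$ plus the ball observation alone does not suffice.

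On the endpoint identifications: the ``all poles $\Rightarrow$ compact sphere'' half is fine and matches the paper. In the no-pole case, your claim that $\pi$ is \emph{globally} homeomorphic to $\R^m\times(\textrm{a component of hedge }n\textrm{-vertices})$ is both unjustified (one must also rule out $\pi$ straddling several left block boundaries, and ``locally a product, hence globally'' needs an argument; the appeal to Lemmas \ref{le:density} and \ref{le:hedgebdry} does not supply one) and unnecessary. The paper's cheaper argument: take a single point $(\zeta,z)\in\pi\subset\partial B-PB\approx\partial H\times B^m$ for a left block $B$; since all points of $\{\zeta\}\times B^m$ have homeomorphic local path components, the whole fiber $\{\zeta\}\times B^m$ lies in $\pi$, while its frontier in $\partial B$ is $PB$, which is disjoint from $\pi$; hence $\pi$ is not closed in the compact space $\partial X$, so not compact. (Merely containing a copy of $\R^m$ would not by itself force non-compactness; it is the failure of closedness, or an honestly global product, that does.) Finally, your opening parenthetical that part (1) is vacuous when $\pi$ contains no pole is backwards --- that is exactly the case in which (1) has content, namely that $\pi$ must be non-compact; you do treat it later, so this is only a slip in framing, but it should be removed.
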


\begin{proof}
We begin by proving that if one point of $\pi$ is a pole, then every other point of $\pi$ is a pole
as well:
Suppose \(\alpha:[0,1]\to\pi\) is a path such that \(\alpha(0)\) is a pole
and \(\alpha(1)\) is not.  Since the collection of poles is closed, we may write
\[
	\overline t=\max\bigl\{0\le t\le 1\big|\alpha(t)\textrm{ is a pole}\bigr\};
\]
say \(\alpha(\overline t)\in PB_+\).
By Theorem C$'$, there is an \(s\in(\overline t,1)\) so that \(\alpha(\overline t,s)\subset\partial B_+\)
and contains no $n$-vertices, giving us a contradiction.\\

We now prove (1) and (2) by contrapositive; if $\pi$ contains all poles, then $\pi$ is a sphere,
giving us (2).  For (1), assume $\pi$ contains no poles; then
\begin{align*}
	\pi
&\subset
	\partial B-PB \\
&=
	PB\ast\partial H(B)-PB \\
&\approx
	\partial H\times B^m
\end{align*}
where $B$ is a left block, $B^m$ is the open $m$-ball, and $H$ is the hedge factor of $B$.
If \((\zeta,z)\in\pi\), then since every other point
of \(\{\zeta\}\times B^m\) has a local path component homeomorphic to that of
\((\zeta,z)\), we know that in fact \(\{\zeta\}\times B^m\subset\pi\).
But the frontier of \(\{\zeta\}\times B^m\) in \(\partial B\) is \(PB\),
which is disjoint from $\pi$.  This proves (1).
\end{proof}

\subsection{Safe Path Components}
\label{sec:spcomps}
In their original work, Croke and Kleiner conceived that \(RX\) was not a path component.
However, in order for a path starting in $RX$ to leave it, it has to pass through infinitely
many block boundaries on its way out.  The same is true here.  In this section, we define a
\textit{safe path} to be a path $\alpha(t)$ in \(\partial X\) which passes through $m$-vertices
for only finitely many times $t$.
Recall that under our assumption that \(m\le n\), it follows from Corollary \ref{co:localhomology}
that $m$-vertices are poles.  If $m<n$, then these are poles of left blocks.\\

\begin{ThmD'}
$RX$ is the unique dense safe path component of \(\partial X\).
\end{ThmD'}

\begin{proof}
We begin by showing that $RX$ is safe path connected; choose \(\zeta,\eta\in RX\),
say \(\zeta\in\partial B_0\) and \(\eta\in\partial B'_0\).  We will construct a safe path between
$\zeta$ and $\eta$ in $RX$ by induction on the length of \(\Itin[B_0,B'_0]=[B_1,...,B_n]\).
Assume \(n=1\) (that is, that \(B_0=B_0'\)), and let \([\zeta_P,\zeta_H]\) and \([\eta_P,\eta_H]\) be join arcs of
\(\partial B_0=PB_0\ast\partial H_0\) containing $\zeta$ and $\eta$.
Note that each of these join arcs contains at most two poles, one of which is a pole of $B_0$.
For, if one contained two poles of a nieghboring block, then we would not have \(E^m_-\cap E^n_+=\{0\}\).
Take $\alpha$ to be the path from $\zeta$ to $\eta$
\[
	[\zeta,\zeta_P]\cup[\zeta_P,\eta_H]\cup[\eta_H,\eta].
\]
Then $\alpha$ passes through only finitely many left poles, and hence is safe.\\
In general, if \(n>1\), then choose a point \(\zeta'\in\partial B_{n-1}\cap\partial B_n\) and
concatenate a safe path from $\zeta$ to $\zeta'$ in \(\partial B_1\cup ...\cup\partial B_{n-1}\)
to a safe path from $\zeta'$ to $\eta$ in \(\partial B_n\).\\

So $RX$ is safe path connected.  The next step is to show that $RX$ is a safe path component.  We do this by
contradiction:
Suppose we have a safe path $\alpha$ which starts in
$RX$ but ends outside.  Set \(\overline t=\inf\{t|\alpha(t)\notin RX\}\).  If \(\alpha(\overline t)\in RX\),
then \(\alpha(\overline t)\in\partial B\) for some block $B$ and
\(\alpha(\overline t,\overline t+\epsilon)\subset\partial B\) for some small \(\epsilon>0\).  So \(\alpha(\overline t)\notin RX\), but 
\(\alpha[0,\overline t)\subset RX\). 
Let $s$ be the last time at which $\alpha$ passes through an $m$-vertex, say \(\alpha(s)\in PB\).
Then for some small \(\epsilon>0\), \(\alpha(s,s+\epsilon)\) is contained in a path component of
\(\partial B-PB\) of the form \((PB\ast V)-PB\) where $V$ is a
path component of the boundary of the hedge factor of $B$.  If \(\alpha(s,s+\epsilon)\) not contained in the boundary of
a wall, then \(\alpha(s,\overline t)\subset\partial B\).
But \(\alpha|_{(s,\overline t)}\) cannot stay inside \(\partial B\) for \textit{any} B, for if it did,
then \(\alpha(\overline t)\in\partial B\) as well, since block boundaries are closed!
So \(\alpha(s,s+\epsilon)\subset\partial W\) for some wall \(W=B\cap B_+\), and \(\alpha|_{(s,t)}\)
enters $\partial B_+$.  Since this path must also leave $\partial B_+$, it must pass through another
pole, giving us a contradiction.\\

Finally, $RX$ is dense by Lemma \ref{le:denseRX} and
no other safe path component of \(\partial X\) can be dense, since the irrational map provided in Section
\ref{sec:NervesandItineraries} takes components of $IX$ to points of the Cantor set.
\end{proof}

\section{The Proof of Theorem \ref{thm:generalCK}}
The \textit{watermark} of $X$ is defined to be the unordered pair \(\{\mathcal{W}(Y_-),\mathcal{W}(Y_+)\}\).
Let $X_1$ and $X_2$ be two generalized Croke-Kleiner constructions for $G$ from the same spaces
and suppose \(\phi:\partial X_1\to\partial X_2\) is a homeomorphism.
By Proposition \ref{prop:watermark}, there are at least two possible watermarks.
Therefore Theorem \ref{thm:generalCK} will follow from the following proposition.\\

\begin{prop}
\label{prop:watermarx}
$X_1$ and $X_2$ have the homeomorphic watermarks.
\end{prop}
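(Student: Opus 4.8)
The plan is to show that a homeomorphism $\phi:\partial X_1\to\partial X_2$ must carry the union of block boundaries of $X_1$ onto the union of block boundaries of $X_2$, then carry poles to poles with matching parity, then individual block boundaries to block boundaries and wall boundaries to wall boundaries, and finally carry the distinguished watermark configuration inside one block boundary to the corresponding one in the target. The topological invariants assembled in the preceding sections are exactly what is needed: Theorem D$'$ identifies $RX$ (the union of block boundaries) as the unique dense safe path component, so $\phi(RX_1)=RX_2$. Within $RX$, the local homology computations of Corollary \ref{co:localhomology} together with Lemmas \ref{le:fewfakepoles} and \ref{le:fakerightpoles} give a purely topological characterization of the poles (and, when $m<n$, of left versus right poles, via the compactness dichotomy for path components of the set of $n$-vertices). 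Hence $\phi$ takes the set of poles of $X_1$ to the set of poles of $X_2$, preserving the left/right labeling.

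Next I would recover the block boundaries themselves. A single block boundary $\partial B$ is, by Theorem B$'$, characterized among block boundaries by its intersection pattern with the others: two block boundaries meet in a wall boundary when the corresponding nerve vertices are adjacent, in a sphere of poles when they are at distance two, and are disjoint otherwise. Since $\phi$ preserves $RX$, poles, and (as in Croke--Kleiner's original argument, which goes through verbatim here using Theorems A$'$--D$'$) the nerve-adjacency structure, $\phi$ sends each $\partial B_-$ in $X_1$ to some $\partial B'_-$ in $X_2$ and each $\partial B_+$ to some $\partial B'_+$, and it sends wall boundaries to wall boundaries. In particular, $\phi$ restricts to a homeomorphism $\partial B\to\partial B'$ carrying $PB$ to $PB'$ and carrying $\Pi B$ (which, by the Fact preceding Theorem B$'$, is the set of poles of neighboring blocks) onto $\Pi B'$.

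Now fix a left block $B$ of $X_1$ with image $B'$. The proof that $l(\zeta_0)\cap\overline{\Pi Y}\approx\mathcal W(Y)$ shows that for any longitude $l(\zeta_0)$ of $\partial B$ (an $m$-ball of the form $PB\ast\zeta_0$), the subspace $l(\zeta_0)\cap\overline{\Pi B}$ is homeomorphic to the watermark $\mathcal W(Y_-)$, independently of $\zeta_0$. The longitudes are topologically distinguishable inside $\partial B$ as the join arcs from a pole of $B$; under $\phi$ such an $m$-ball goes to an $m$-ball joining $PB'$ to a point, i.e. to a longitude of $\partial B'$, and $\phi$ carries $l(\zeta_0)\cap\overline{\Pi B}$ homeomorphically onto $l(\zeta_0')\cap\overline{\Pi B'}$. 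Therefore $\mathcal W((Y_-)_1)\approx\mathcal W((Y_-)_2)$, and symmetrically $\mathcal W((Y_+)_1)\approx\mathcal W((Y_+)_2)$; the unordered pair is preserved, which is the claim. When $m<n$ the left/right distinction recorded above guarantees the two watermarks are not swapped in a way that breaks the argument; when $m=n$ one simply allows the pair to be matched in either order, which is exactly what ``unordered pair'' means.

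The main obstacle I expect is the passage from ``$\phi$ preserves poles and nerve-adjacency'' to ``$\phi$ preserves individual block boundaries and wall boundaries'' with the correct parity bookkeeping --- that is, making Croke and Kleiner's verbatim argument genuinely go through in the presence of the extra factors $\Gamma_\pm$ and the higher-dimensional poles, and in particular nailing down that a longitude of $\partial B$ is sent to a longitude of $\partial B'$ rather than to some other $m$-ball meeting $PB'$ in a single point. This is where one must use that longitudes are precisely the join arcs emanating from a pole of the ambient block (so that their characterization is intrinsic to $\partial B$ once $PB$ is known) and that $\phi$ respects this intrinsic structure because it respects $PB$. Everything else is bookkeeping with the invariants already established.
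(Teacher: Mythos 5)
Your overall route is the paper's route: use Theorem D$'$ to get \(\phi(RX_1)=RX_2\), use the local homology results (Corollary \ref{co:localhomology}, Lemmas \ref{le:fewfakepoles} and \ref{le:fakerightpoles}) to see that poles go to poles, upgrade this to \(\phi(PB_1)=PB_2\), \(\phi(\Pi B_1)=\Pi B_2\), \(\phi(\partial B_1)=\partial B_2\) for matching blocks, send longitudes to longitudes, and then invoke the lemma that \(l(\zeta_0)\cap\overline{\Pi Y}\approx\mathcal W(Y)\) for every \(\zeta_0\). Two remarks on execution. First, where you say the block-boundary correspondence follows ``verbatim'' from Croke--Kleiner via Theorem B$'$, the paper gives a short direct argument instead: for \(\dim PB_1\ge 1\) connectedness of the pole sphere does the work, while for the two-point case \(m=1\) it uses a counting argument on path components of \(RX-PB_1\), and then paths between poles containing no intermediate poles give \(\Pi B_1\mapsto\Pi B_2\) and \(\partial B_1\mapsto\partial B_2\). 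Your citation papers over exactly the case (zero-dimensional poles) that needs a separate argument.

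Second, and this is the genuine gap: your claim that longitudes are ``topologically distinguishable inside \(\partial B\) as the join arcs from a pole of \(B\)'' is not a topological characterization, and the step ``an \(m\)-ball meeting \(PB'\) must be a longitude'' does not follow from it. For a point \(\zeta_0\) lying in a nondegenerate path component \(V\) of \(\partial H\), the component \((PB\ast V)-PB\approx V\times B^m\) admits homeomorphisms that do not respect the slices \(\{\zeta\}\times B^m\), so a homeomorphism \(\partial B_1\to\partial B_2\) carrying \(PB_1\) to \(PB_2\) need not carry such a longitude to any longitude; as stated, your argument fails for these \(\zeta_0\). The fix, which is what the paper's final lemma does, is to use only longitudes over points \(\zeta\) that are singleton path components of \(\partial H\) (irrational points, which exist and whose interiors \(l(\zeta)-PB\) are then entire path components of \(\partial B-PB\)): such an \(l(\zeta_1)\) is intrinsically the closure of a path component of \(\partial B_1-PB_1\) that is an \(m\)-ball with frontier \(PB_1\), so \(\phi(l(\zeta_1))\) is forced to be \(l(\zeta_2)\) for some singleton component \(\{\zeta_2\}\) of \(\partial H(B_2)\) (a nondegenerate component would produce a join of dimension greater than \(m\)). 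Since the watermark can be computed in any one longitude, restricting to these special longitudes costs nothing, and with that substitution your argument closes up and agrees with the paper's.
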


The following are immediate from Theorem D$'$, Corollary \ref{co:localhomology}, and Lemma
\ref{le:fakerightpoles}.\\

\begin{fact}
\(\phi(RX_1)=RX_2\)\\
\end{fact}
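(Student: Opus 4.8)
The plan is to check that the homeomorphism $\phi$ intertwines every feature used in Theorem D$'$ to single out $RX$, and then to quote the uniqueness clause of that theorem. The one substantive input is that local homology is a topological invariant: $\phi$ induces isomorphisms $H_k(\partial X_1,\partial X_1-\zeta)\cong H_k(\partial X_2,\partial X_2-\phi(\zeta))$ for all $\zeta$ and $k$, and such an isomorphism preserves the property of being uncountably generated. Hence, for each fixed degree $k$, $\phi$ carries the $k$-vertices of $\partial X_1$ onto the $k$-vertices of $\partial X_2$. Since $X_1$ and $X_2$ are built for the same group $G$ with the same parameters $m\le n$, the phrase ``$m$-vertex'' means the same thing on both sides, so in particular $\phi$ takes $m$-vertices to $m$-vertices. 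It follows at once that a path in $\partial X_1$ passes through $m$-vertices for only finitely many times if and only if its $\phi$-image does the same in $\partial X_2$; thus $\phi$ takes safe paths to safe paths and, applying the same to $\phi^{-1}$, safe path components to safe path components. As $\phi$ also takes path components to path components and dense subsets to dense subsets, and as Theorem D$'$ identifies $RX_i$ as the \emph{unique} dense safe path component of $\partial X_i$, the set $\phi(RX_1)$ --- being a dense safe path component of $\partial X_2$ --- must equal $RX_2$.

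The point deserving care is the legitimacy of treating ``$m$-vertex'' as a property detected inside $\partial X$ rather than inside a single block boundary, and this is exactly where Corollary \ref{co:localhomology} enters. For a point $\zeta$ of a block boundary $\partial B$ which is not a pole of any neighboring block, Theorem C$'$ gives a local path component of $\zeta$ lying in $\partial B$, and Lemma \ref{le:usepathcomponents} lets the local homology of $\partial X$ at $\zeta$ be computed in that component, i.e.\ in $\partial B\cong\partial Y$; Corollary \ref{co:localhomology} then says this homology is finitely generated in degrees below $m$ and, in degree $m$, is infinitely generated precisely when $\zeta$ is a pole (and similarly with $n$ in place of $m$ if $B$ is a right block). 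Thus $\partial X$ has no vertex of degree $<m$, so the $m$-vertices are genuinely the lowest-dimensional ones, and the vertex stratification of $\partial X$ used in Section \ref{sec:spcomps} is the topologically canonical one. Lemma \ref{le:fakerightpoles} belongs to the same circle: in the case $m<n$ it resolves, through its compact/non-compact dichotomy for path components of the set of $n$-vertices, which $n$-vertices are actually poles, which is what keeps the description ``$RX=$ unique dense safe path component'' consistent and, more to the point, is what will be needed for the companion statements about poles that follow.

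I expect the write-up to be short --- essentially the transport argument of the first paragraph --- with all the real content already packaged into Theorem D$'$, Corollary \ref{co:localhomology}, and Lemma \ref{le:fakerightpoles}. The main obstacle, such as it is, is not a new argument but the bookkeeping just described: one must be sure that ``$m$-vertex'', and hence ``safe path'', is a notion computed in $\partial X$ itself, so that a homeomorphism is automatically faithful to it.
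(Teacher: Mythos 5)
Your argument is correct and is precisely the one the paper intends: the Fact is stated as ``immediate from Theorem D$'$, Corollary \ref{co:localhomology}, and Lemma \ref{le:fakerightpoles},'' and your transport argument --- local homology is a topological invariant, so $\phi$ preserves $m$-vertices, hence safe paths, hence dense safe path components, which are unique by Theorem D$'$ --- is exactly the intended chain of reasoning. No discrepancy to report.
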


\begin{fact}
$\phi$ takes poles to poles.\\
\end{fact}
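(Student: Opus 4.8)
The plan is to realize the set of poles of $\partial X$ as a topologically defined subset — built only from the invariants ``is an $m$-vertex'', ``is an $n$-vertex'', ``lies in $RX$'', and ``lies in a compact path component'' — so that the homeomorphism $\phi$ is forced to carry it onto the pole set of $\partial X_2$. Two preliminary observations do most of the bookkeeping. First, $\phi(RX_1)=RX_2$ by the preceding fact, which itself follows from Theorem D$'$ together with the remark that a safe path, being defined through $m$-vertices and hence through local homology, is a purely topological notion; thus $RX$ is topologically characterized. Second, $\phi$ carries $H_\ast(\partial X_1,\partial X_1-\zeta)$ isomorphically onto $H_\ast(\partial X_2,\partial X_2-\phi\zeta)$, so being an $n$-vertex is a homeomorphism invariant; and since $X_1$ and $X_2$ are constructions for the same $G$ with the same $m$ and $n$, these integers mean the same thing on both sides.

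Next I would pin down the ``low-dimensional'' poles. In every case a pole of a block whose Euclidean factor has the minimal dimension $m$ is, by Corollary \ref{co:localhomology} (applied through Theorem C$'$ and Lemma \ref{le:usepathcomponents}, exactly as in the proof of Lemma \ref{le:fewfakepoles}), an $m$-vertex lying in $RX$. Conversely, when $m=n$ Lemma \ref{le:fewfakepoles}(1) says every $m$-vertex of $RX$ is a pole, and when $m<n$ Lemma \ref{le:fewfakepoles}(2) says every $m$-vertex of $RX$ is a pole of a left block. Hence, if $m=n$ the pole set is exactly the set of $m$-vertices lying in $RX$; if $m<n$ that same set is exactly the set of left poles. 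Either way it is topologically characterized, so $\phi$ preserves it — and this finishes the case $m=n$.

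The main obstacle is the right poles in the case $m<n$, because there may be $n$-vertices in \emph{left} block boundaries that are not poles at all, so the right poles form only a proper subset of $N:=\{\zeta\in RX:\zeta\text{ is an }n\text{-vertex}\}$; this is precisely the situation Lemma \ref{le:fakerightpoles} was built for. Here is how I would finish. A left pole is an $m$-vertex, and since $m<n$ it is not an $n$-vertex, so no left pole lies in $N$; a right block has Euclidean factor $E^n_+$, so by Corollary \ref{co:localhomology} a right pole is an $n$-vertex of $RX$, hence lies in $N$. Therefore the poles lying in $N$ are precisely the right poles. Now decompose $N$ into its path components: Lemma \ref{le:fakerightpoles} says each component consists entirely of poles or entirely of non-poles, and consists of poles exactly when it is compact, so the set of right poles equals the union of the compact path components of $N$. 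Since $N$ is topologically characterized, $\phi$ permutes its path components and preserves their compactness, so $\phi$ carries the right poles of $X_1$ onto those of $X_2$.

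Putting the pieces together, the pole set of $\partial X$ is the set of $m$-vertices lying in $RX$ when $m=n$, and the union of that set with the union of the compact path components of $N$ when $m<n$; in either case it is a homeomorphism invariant, so $\phi$ takes poles to poles. I expect the delicate points to be purely conceptual rather than computational: verifying the clean separation of the right poles inside $N$ (that a left pole is genuinely not an $n$-vertex when $m<n$), and recognizing that the compactness dichotomy of Lemma \ref{le:fakerightpoles} is exactly what promotes ``pole lying in $N$'' to a topological description.
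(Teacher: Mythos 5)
Your argument is correct and follows exactly the route the paper intends: it declares this Fact ``immediate from Theorem D$'$, Corollary \ref{co:localhomology}, and Lemma \ref{le:fakerightpoles},'' and your write-up is a faithful expansion of that, characterizing left poles as the $m$-vertices of $RX$ and right poles (when $m<n$) as the compact path components of the set of $n$-vertices in $RX$. The one point worth making explicit is that Theorem C$'$ applies at a pole because the skew $\theta>0$ keeps $PB_-$ and $PB_+$ disjoint, so the local homology of $\partial X$ at a pole really is computed inside the boundary of its own block.
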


\begin{lemma}
Let $B_1$ be a block of $X_1$.  Then there is a block $B_2$ of $X_2$ such that:\\
\begin{enumerate}
\item \(\phi(PB_1)=PB_2\).\\
\item \(\phi(\Pi B_1)=\Pi B_2\).\\
\item \(\phi(\partial B_1)=\partial B_2\).\\
\end{enumerate}
\end{lemma}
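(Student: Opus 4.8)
The plan is to run the argument by which Croke and Kleiner show (see Section~\ref{sec:origCK}) that a boundary homeomorphism respects the block--wall structure, with two changes: the ``cone on a Cantor set'' that detects poles in \cite{CK} is replaced by the local homology of Corollary~\ref{co:localhomology}, and Lemmas~\ref{le:fewfakepoles} and \ref{le:fakerightpoles} supply the bookkeeping that records which vertices of $\partial X$ are poles of which kind of block. Two elementary facts are used repeatedly: $PB\cap\Pi B=\emptyset$ for every block $B$ (because $\angle_{Tits}(PB,S_L)=\theta>0$), so that a pole of $B$ is not a pole of any neighbour and Theorem~C$'$ applies to it; and distinct pole spheres are pairwise disjoint (same angle estimate together with Theorem~B$'$) and compact, so no one of them accumulates on another.

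First I would fix a pole $\zeta_0\in PB_1$; by the Fact preceding the lemma, $\phi(\zeta_0)$ is a pole of $X_2$, hence lies in $PB_2$ for a \emph{unique} block $B_2$. For part~(1), assume for the moment $m\geq 2$. The set of $m$-vertices of $\partial X$ is, by Corollary~\ref{co:localhomology} carried from block boundaries to $\partial X$ via Theorem~C$'$ and Lemma~\ref{le:usepathcomponents}, together with Lemmas~\ref{le:fewfakepoles} and \ref{le:fakerightpoles}, a disjoint union of pole spheres no two of which accumulate on one another; hence each such sphere is a path component of it. Since $\phi$ and $\phi^{-1}$ both send $m$-vertices to $m$-vertices, $\phi$ permutes these path components and $\phi(PB_1)=PB_2$. (If $m=1$ a ``pole sphere'' is two points carrying no intrinsic pairing; in that case I would derive part~(1) from part~(3) below, exactly as in \cite{CK}.)

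For part~(3), $\phi(\partial B_1)=\partial B_2$, I would recover $\partial B$ from a pole $\zeta_0\in PB$, imitating the recovery of a block boundary from its poles in \cite{CK}. Say $B$ is a left block (the right case is symmetric, with $m$ and $n$ interchanged). Starting at $\zeta_0$, consider the path component of $\zeta_0$ in $\partial X$ with all $n$-vertices deleted. Because $\partial B\subset RX$ and $\partial B=PB\ast\partial H_B$ while $\partial H_B$ is totally path-disconnected, any path leaving $\partial B$ for a neighbouring block must first descend to $PB$, and to move between distinct hedge-directions of a neighbour it must pass through that neighbour's pole sphere, which consists of $n$-vertices; tracking this shows the reachable set lies in $\partial B$ together with one ``slice'' of each neighbouring block, all of which sit inside $\overline{\partial B}$. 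Since $\Pi B$ (the deleted part of $\partial B$) is nowhere dense in $\partial B$ — it lies in the closed, empty-interior subset $\partial H_B$ — the closure of the reachable set is exactly $\partial B$. This description uses only $RX$, the $n$-vertices and path-connectivity, all preserved by $\phi$ (using $\phi(RX_1)=RX_2$ and that $\phi$ preserves vertices and vertex dimension), so $\phi(\partial B_1)=\partial B_2$. Part~(2) is then formal, since $\Pi B_i$ is precisely the set of poles of $X_i$ lying in $\partial B_i\smallsetminus PB_i$, and $\{\text{poles}\}$, $\partial B_i$ and $PB_i$ are by now matched up by $\phi$.

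The main obstacle I anticipate is making the ``recover $\partial B$ from a pole'' step genuinely precise and non-circular — in particular controlling the slices into neighbouring blocks so that the closure comes out to be $\partial B$ on the nose — and keeping the argument uniform in $m$ and $n$: when $m<n$ the first dimension in which local homology is uncountably generated cleanly separates poles of left blocks from poles of right blocks, but when $m=n$ that distinction disappears and one must instead separate the two types of pole along a path, using the safe-path analysis behind Theorem~D$'$ together with the compact/non-compact dichotomy of Lemma~\ref{le:fakerightpoles}. The degenerate case $m=1$, where pole spheres are two-point sets, is a genuine but minor additional case.
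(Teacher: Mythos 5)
Your plan diverges from the paper's proof mainly in how you establish part~(3), and that is where the real gap lies. You recover $\partial B$ as the closure of the path component of a pole $\zeta_0$ in $\partial X$ minus all $n$-vertices, and justify the closure step by saying that the deleted part of $\partial B$ is $\Pi B$, which ``lies in the closed, empty-interior subset $\partial H_B$.'' That last claim is false except in the degenerate case $\theta=\pi/2$: for a left block, $\Pi B=\bigcup_L S_L$ where each $S_L$ is the limit sphere of a $\Z^n$-conjugate sitting inside a wall boundary at Tits angle $\theta$ from $PB$; only when $\theta=\pi/2$ does $S_L$ coincide with the leaf boundary $\partial L\subset\partial H_B$. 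Since the whole point of Theorem~1 (via Proposition~\ref{prop:watermark}) is to distinguish $\theta=\pi/2$ from $\theta<\pi/2$, you cannot assume the favourable position of $\Pi B$. Worse, when $m<n$ the set you delete inside $\partial B$ is not just $\Pi B$: left block boundaries may contain non-pole $n$-vertices (this is exactly why Lemma~\ref{le:fakerightpoles} exists), and nothing in your argument controls where these sit, so ``closure of the reachable set $=\partial B$'' is asserted, not proved (already for $m=1$, removing $S_L$ disconnects the wall sphere $\partial W$ and the side reachable from $\zeta_0$ omits a large region). Also, the containment direction is stated with the wrong mechanism — a path exits $\partial B$ through a neighbour's pole sphere $PB_+\subset\partial W$, not by ``descending to $PB$'' — although this direction can in fact be repaired cleanly with Theorem~C$'$: at the exit point, which is not in $\Pi B$, a whole local path component stays in $\partial B$.

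The second gap is that the cases your dimension-counting cannot see are only gestured at, whereas they require genuine arguments. When $m=n$, poles of both parities are $n$-vertices, so ``delete all $n$-vertices'' deletes $\zeta_0$ itself and the construction behind (3) collapses; ``use the safe-path analysis'' is not an argument. When $m=1$ you defer (1) to (3) ``exactly as in \cite{CK},'' but (3) is precisely the part with the gaps above, and the paper handles the two-point pole set by a separate count: if $\phi$ split $PB_1$ across two different blocks, then $RX_2-\phi(PB_1)$ would have only two path components (the image of the open longitude and its complement), whereas $RX_1-PB_1$ has infinitely many. For comparison, the paper's route is: prove (1) first (connectedness of $PB_1$ when its dimension is at least $1$, the path-component count when it consists of two points), and then obtain (2) and (3) from the simpler observation that any path in $RX_1$ which starts on $PB_1$, ends at a pole, and has no poles in its interior must stay in $\partial B_1$ and terminate in $PB_1\cup\Pi B_1$; this avoids both the $m=n$ breakdown and any appeal to the unknown structure of the non-pole $n$-vertices. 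Your part~(1) for $m\ge 2$ and your formal deduction of (2) from (1), (3) and pole-preservation are fine and essentially match the paper.
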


\begin{proof}
Choose \(\zeta\in PB_1\) and let \(B_2\) be such that \(\phi(\zeta)\in PB_2\).
If the dimension of \(PB_1\) (or \(PB_2\)) is at least 1, then $PB_1$ is connected and since $\phi$
takes poles to poles, we get that \(\phi(PB_1)=PB_2\).  Assume \(PB_1\) (and $PB_2$) both have dimension zero.  Then
they both consist of exactly two points and write \(PB_2=\{\zeta,\eta\}\).
Choose \(\zeta\in\partial H(B_1)\) such that $\{\zeta\}$ is a path component of \(\partial H(B_1)\)
and let \(\alpha:[0,1]\to l(\zeta)\) parameterize the longitude in such a way that 
\(\alpha(0)=\zeta\) to \(\alpha(1)=\eta\).
If \(\phi(\eta)\) is not a pole of $B_2$, then it must be a pole of a neighboring block and \(RX_2-\phi(PB_1)\)
has two path components: \(\phi(\alpha(0,1))\) and \(RX_2-\phi(\alpha[0,1])\).
But \(RX_1-PB_1\) has an infinite number of path components, which gives us a contradiction.
This shows that \(\phi(PB_1)=PB_2\).\\

We get that \(\phi(\Pi B_1)=\Pi B_2\) and \(\phi(\partial B_1)=\partial B_2\) by the following argument:
If \(\alpha:[0,1]\to RX_1\) is a path such that \(\alpha(0)\in PB_1\), \(\alpha(1)\) is a pole, and
\(\alpha(0,1)\) contains no poles, then \(\alpha\subset\partial B_1\) and \(\alpha(1)\) is either a pole of
\(B_1\) or a pole of a neighboring block.
\end{proof}

\begin{lemma}
Let $B_1$ be a block of $X_1$ and $B_2$ be a block of $X_2$
such that \(\phi(PB_1)=PB_2\).
If we have \(\zeta_1\in\partial H(B_1)\) such that \(\{\zeta_1\}\) is a path component of \(\partial H(B_1)\), then
there is a \(\zeta_2\in\partial H(B_2)\) such that \(\phi(l(\zeta_1))=l(\zeta_2)\).
\end{lemma}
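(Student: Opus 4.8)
The plan is to use the already-established fact that $\phi$ takes poles to poles and block boundaries to block boundaries (the two preceding Facts and the previous Lemma), and then to identify a longitude $l(\zeta_1)$ \emph{intrinsically} as a path component of the set of vertices inside $\partial B_1$ anchored at $PB_1$. Concretely, since $\{\zeta_1\}$ is a path component of $\partial H(B_1)$, the longitude $l(\zeta_1)=PB_1\ast\zeta_1$ sits inside $\partial B_1=PB_1\ast\partial H(B_1)$ as the join of $PB_1$ with a single point, hence is an $m$-ball (if $B_1$ is left) or $n$-ball (if $B_1$ is right) whose interior consists entirely of $m$- or $n$-vertices by Corollary \ref{co:localhomology}, and whose frontier in $\partial B_1$ is exactly $PB_1$. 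The strategy is to show this description is preserved by $\phi$: $\phi(l(\zeta_1))$ must be a join-arc-type subset of $\partial B_2=PB_2\ast\partial H(B_2)$ with the same two properties, and such a subset is forced to be $l(\zeta_2)$ for a point $\zeta_2\in\partial H(B_2)$ which is itself a path component.

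First I would handle the case in which $PB_1$ (equivalently $PB_2$) has positive dimension. Then $l(\zeta_1)-PB_1$ is connected and consists of vertices of a single dimension $k$ ($k=m$ or $n$ depending on parity); moreover, since $\{\zeta_1\}$ is a path component of $\partial H(B_1)$, the set $l(\zeta_1)-PB_1$ is actually a path component of the set of $k$-vertices of $\partial B_1$ (any path in $\partial B_1$ leaving $l(\zeta_1)-PB_1$ would have to move the hedge coordinate, but $\{\zeta_1\}$ is isolated among path components, so it cannot do so without passing through $PB_1$, which is not a $k$-vertex set when $k$ is the vertex dimension of the \emph{other} parity---here I would invoke Lemma \ref{le:fewfakepoles} and Lemma \ref{le:fakerightpoles} to control which vertices occur). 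Applying $\phi$, which preserves vertex dimension (local homology is a topological invariant) and carries $PB_1\to PB_2$, the image $\phi(l(\zeta_1)-PB_1)$ is a path component of the $k$-vertex set of $\partial B_2$ whose closure meets $PB_2$; by the structure of $\partial B_2=PB_2\ast\partial H(B_2)$, such a component has the form $l(\zeta_2)-PB_2$ where $\{\zeta_2\}$ is a path component of $\partial H(B_2)$, and its closure is $l(\zeta_2)$. Hence $\phi(l(\zeta_1))=l(\zeta_2)$.

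When $PB_1$ and $PB_2$ are both $0$-spheres (the case $m=1$, left blocks, in the notation where $m\le n$), a little more care is needed because $l(\zeta_1)$ is then just an arc with two endpoints, and connectivity alone does not pin it down. Here I would argue as in the proof of the previous lemma: $l(\zeta_1)-PB_1$ is an open arc of $1$-vertices, and removing $PB_1$ from $RX_1$ leaves infinitely many path components while removing $\phi(l(\zeta_1))$'s two frontier poles from $RX_2$ must leave at least as many; tracking the path component of $l(\zeta_1)-PB_1$ and using that $\{\zeta_1\}$ is isolated in $\partial H(B_1)$ forces $\phi(l(\zeta_1))$ to be precisely the join of $PB_2$ with a single isolated hedge-boundary point, i.e. $l(\zeta_2)$.

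The main obstacle I anticipate is the bookkeeping in the mixed-parity / mixed-dimension situation $m<n$: a priori $\phi$ could send an $m$-ball longitude of a left block into $\partial B_2$ for a \emph{right} block $B_2$, where the relevant vertex dimension is $n\ne m$, and one must rule this out (or show it forces $B_2$ left) using that $\phi$ preserves vertex dimension together with Lemma \ref{le:fewfakepoles}(2) and Lemma \ref{le:fakerightpoles}. Making the identification ``path component of the $k$-vertex set whose closure hits $PB_2$ $\Rightarrow$ a longitude'' rigorous---i.e. that no other subset of $PB_2\ast\partial H(B_2)$ has this shape---is where the argument has to be watertight, and I would lean on the explicit product description $\partial B_2-PB_2\approx \partial H(B_2)\times B^k$ from the proof of Lemma \ref{le:fakerightpoles} to finish it.
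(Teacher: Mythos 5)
There is a genuine gap at the heart of your intrinsic characterization. You claim that the interior of the longitude $l(\zeta_1)=PB_1\ast\zeta_1$ ``consists entirely of $m$- or $n$-vertices by Corollary \ref{co:localhomology}'' and then take $l(\zeta_1)-PB_1$ to be a path component of the $k$-vertex set of $\partial B_1$. That corollary says the opposite: within a block boundary, the only points whose local homology is uncountably generated in the block's own dimension $k$ are the poles. Indeed, for a point $q(\eta,\zeta_1,t)$ with $t>0$, since $\{\zeta_1\}$ is a path component of $\partial H(B_1)$, a local path component of that point is homeomorphic to $\{\zeta_1\}\times B^k$, an open $k$-ball, so by Lemma \ref{le:usepathcomponents} and Lemma \ref{le:homologyofaproductwithB} its local homology is $\Z$ in dimension $k$ and zero elsewhere --- finitely generated in every dimension. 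So the interior points of $l(\zeta_1)$ are not vertices at all, the ``$k$-vertex set of $\partial B_1$'' contains no longitude points, and the object you propose to track under $\phi$ is not the longitude. The proof as written therefore does not get off the ground, and the parity/dimension bookkeeping you worry about in the last paragraph is bookkeeping for the wrong invariant.

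The repair is close to what you already have in hand and is the paper's actual argument: forget vertices and use path components of $\partial B_1-PB_1$ itself. Since $\partial B_1-PB_1\approx\partial H(B_1)\times B^k$ and $\{\zeta_1\}$ is a path component of $\partial H(B_1)$, the set $l(\zeta_1)-PB_1$ is a path component of $\partial B_1-PB_1$, and $l(\zeta_1)$ is a ball whose boundary sphere (frontier in the ball sense) is exactly $PB_1$. The previous lemma gives $\phi(\partial B_1)=\partial B_2$, and the hypothesis gives $\phi(PB_1)=PB_2$, so $\phi$ carries this path component to a path component of $\partial B_2-PB_2$, necessarily of the form $\bigl(PB_2\ast V_2\bigr)-PB_2$ with $V_2$ a path component of $\partial H(B_2)$; taking closures, $\phi\bigl(l(\zeta_1)\bigr)$ is a ball with boundary sphere $PB_2$, which forces $V_2$ to be a single point $\zeta_2$ (two distinct cone points over the sphere $PB_2$ would produce a sphere inside a ball of the same dimension), so $\phi\bigl(l(\zeta_1)\bigr)=l(\zeta_2)$. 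Note also that since $PB_2=\phi(PB_1)$ is a sphere of the same dimension as $PB_1$, the mixed-parity scenario you flag is excluded automatically; no appeal to Lemma \ref{le:fewfakepoles} or Lemma \ref{le:fakerightpoles} is needed in this step.
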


\begin{proof}
Whenever \(\{\zeta_i\}\) is a path component of the boundary of the hedge factor of \(B_i\), then
the longitude \(l(\zeta_i)\) is a ball in \(\partial B_i\) whose frontier is precisely \(PB_i\)
and whose interior is a path component of \(\partial B_i-PB_i\).
\end{proof}

Combining these lemmas, we get Proposition \ref{prop:watermarx}.\\

\begin{ack}
The work contained in this paper is one part of the author's Ph.D. thesis written
under the direction of Craig Guilbault at the University of Wisconsin-Milwaukee.  The author would also
like to thank Ric Ancel, Chris Hruska, Boris Okun, and Tim Schroeder for helpful conversations.
\end{ack}

\end{document}